\newtheorem{thm}{Theorem}[section]
\newtheorem{prop}[thm]{Proposition}
\newtheorem{lem}[thm]{Lemma}
\newtheorem{cor}[thm]{Corollary}  \theoremstyle{definition}
\newtheorem{df}[thm]{Definition}   \theoremstyle{definition}
\newtheorem{ques}[thm]{Question}
\newtheorem{prob}[thm]{Problem}
\newtheorem{rem}[thm]{Remark}                \theoremstyle{plain}
 \theoremstyle{definition}
\newtheorem{ex}[thm]{Example}   \def\CC{\Bbb{C}}
\def\CCI{\hat{\CC}}        \def\NN{\Bbb{N}} 
\def\g{\gamma}
\def\G{\Gamma}
\def\GN{\Gamma ^{\NN }}
\def\B1{{\rm\kern.32em\vrule    width.12em       height1.4ex
depth-.05ex\kern-.28em 1}}
\def\pc{\pi _{\CCI }}
\def\GNCR{\GN \times \CCI \rightarrow \GN \times \CCI }
\begin{document}
\title
{Dynamics of 
postcritically bounded  
polynomial 
semigroups III: 
classification of semi-hyperbolic semigroups and 
random Julia sets which are Jordan curves but not quasicircles
\footnote{Published in Ergodic Theory  Dynam. Systems. (2010), {\bf 30}, No. 6, 1869--1902. 
2000 Mathematics Subject Classification. 
37F10, 30D05. Keywords: Complex dynamics, polynomial semigroup, rational semigroup,  
Random complex dynamics, 
Julia set.}}
\author{Hiroki Sumi\\ 
 Department of Mathematics,    
Graduate School of Science\\ 
Osaka University \\ 1-1, \ Machikaneyama,\ Toyonaka,\ Osaka,\ 560-0043,\ 
Japan\\ E-mail: sumi@math.sci.osaka-u.ac.jp\\ 
http://www.math.sci.osaka-u.ac.jp/$\sim $sumi/}
\date{December 8, 2009}
\maketitle 
\begin{abstract}
We investigate the dynamics of polynomial semigroups (semigroups 
generated by a family of polynomial maps on the Riemann sphere $\CCI $) 
and the random dynamics of polynomials on the Riemann sphere. 
Combining the dynamics of semigroups and the fiberwise (random) dynamics, 
we give a classification of polynomial semigroups 
$G$ such that $G$ is generated by a compact family $\Gamma $, 
 the planar postcritical set of $G$ is bounded, and  
$G$ is (semi-) hyperbolic.  
In one of the classes, we have that for almost 
every sequence $\gamma \in \Gamma ^{\NN }$, 
the Julia set $J_{\gamma }$ of $\gamma $ is a Jordan curve but not a quasicircle, 
the unbounded component of $\CCI \setminus J_{\gamma }$ is a John domain, 
and the bounded component of $\CC \setminus J_{\gamma }$ is not a John domain. 
Note that this phenomenon does not hold in the usual iteration of a single 
polynomial. Moreover, we consider the dynamics of polynomial semigroups $G$ 
such that the planar postcritical set of $G$ is bounded and the Julia set is disconnected.  
Those phenomena of polynomial semigroups and random dynamics of 
polynomials that do not occur in the usual dynamics of polynomials  
are systematically investigated. 
\end{abstract} 
\section{Introduction}
\label{Intro}
This is the third paper in which the dynamics of semigroups of polynomial maps with 
bounded planar postcritical set in $\CC $ are investigated. 
This paper is self-contained and the proofs of the results of this paper are  
independent from the results in \cite{SdpbpI, S11}. 
 
The theory of complex dynamical systems, which has 
 its origin in the important 
 work of Fatou and Julia in the 1910s, 
 has been investigated by many people and discussed in depth.  
In particular, since D. Sullivan showed the famous 
``no wandering domain theorem'' using 
Teichm\"{u}ller theory 
in the 1980s, 
this subject has 
attracted 
many researchers 
from a
wide area. 
For a general 
reference on complex dynamical systems, 
see Milnor's textbook \cite{M}.   
 
 There are several 
areas
 in which we deal with 
  generalized notions of 
classical iteration theory of rational functions.   
One of them is the theory of 
dynamics of rational semigroups 
  (semigroups generated by a family of holomorphic maps on the 
  Riemann sphere $\CCI $), and another one is 
 the theory of   
 random dynamics of holomorphic maps on the Riemann sphere. 

In this paper, we will discuss 
these subjects.
 A {\bf rational semigroup} is a semigroup 
generated by a family of non-constant rational maps on 
$\CCI $, where $\CCI $ denotes the Riemann sphere,
 with the semigroup operation being  
functional composition (\cite{HM1}). A 
{\bf polynomial semigroup} is a 
semigroup generated by a family of non-constant 
polynomial maps.
Research on the dynamics of
rational semigroups was initiated by
A. Hinkkanen and G. J. Martin (\cite{HM1,HM2}),
who were interested in the role of the
dynamics of polynomial semigroups while studying
various one-complex-dimensional
moduli spaces for discrete groups,
and
by F. Ren's group(\cite{ZR, GR}), 
 who studied 
such semigroups from the perspective of random dynamical systems.
Moreover, the research 
on 
rational semigroups is related to 
that 
on 
``iterated function systems" in 
fractal geometry.  
In fact, 
the Julia set of a rational semigroup generated by a 
compact family has 
`` backward self-similarity" 
(cf. Lemma~\ref{hmslem}-\ref{bss}). 
For 
other 
research 
on rational semigroups, see 
\cite{Sta1, Sta2, Sta3, SY, SSS, 
SS, SU1, SU2, SU3}, and \cite{S1}--\cite{S8}. 

 The research 
on the  
dynamics of rational semigroups is also directly related to 
that 
on the 
random dynamics of holomorphic maps. 
The first 
study 
in this 
direction was 
by Fornaess and Sibony (\cite{FS}), and 
much research has followed. 
(See \cite{Br, Bu1, Bu2, 
BBR,GQL, S9, S8}.)   

 We remark that the complex dynamical systems 
 can be used to describe some mathematical models. For 
 example, the behavior of the population 
 of a certain species can be described as the 
 dynamical system of a polynomial 
 $f(z)= az(1-z)$ 
 such that $f$ preserves the unit interval and 
 the postcritical set in the plane is bounded 
 (cf. \cite{D}). 
It should also be remarked that 
 according to the change of the natural environment, 
 some species have several strategies to survive in the nature. 
From this point of view, 
 it is very important to consider the random 
 dynamics of such polynomials (see also Example~\ref{realpcbex}). 
For the random dynamics of polynomials on the unit interval, 
see \cite{Steins}. 
 
 We shall give some definitions 
for the 
dynamics of rational semigroups.  
\begin{df}[\cite{HM1,GR}] 
Let $G$ be a rational semigroup. We set
\[ F(G) = \{ z\in \CCI \mid G \mbox{ is normal in a neighborhood of  $z$} \} ,
 \mbox{ and }\ J(G)  = \CCI \setminus  F(G) .\] \(  F(G)\) is  called the
{\bf Fatou set}  of  $G$ and \( J(G)\)  is  called the {\bf 
Julia set} of $G$. 
We 
let 
$\langle h_{1},h_{2},\ldots \rangle $ 
denote 
the 
rational semigroup generated by the family $\{ h_{i}\} .$
More generally, for a family $\Gamma $ of non-constant rational maps, 
we denote by $\langle \Gamma  \rangle $ the rational semigroup generated by 
$\Gamma .$ 
The Julia set of the semigroup generated by 
a single map $g$ is denoted by 
$J(g).$ Similarly, we set $F(g):= F(\langle g\rangle ).$ 
\end{df}

\begin{df}\ 
\begin{enumerate}
\item 
For each rational map $g:\CCI \rightarrow \CCI $, 
we set 
$CV(g):= \{ \mbox{all critical values of }
g: \CCI \rightarrow \CCI \} .$ 
Moreover, for each polynomial map $g:\CCI \rightarrow \CCI $, 
we set $CV^{\ast }(g):= CV(g)\setminus \{ \infty \} .$ 
\item 
Let $G$ be a rational semigroup.
We set 
$$ P(G):=
\overline{\bigcup _{g\in G} CV(g)} \ (\subset \CCI ). 
$$ 
This is called the {\bf postcritical set} of $G.$
Furthermore, for a polynomial semigroup $G$,\ we set 
$P^{\ast }(G):= P(G)\setminus \{ \infty \} .$ This is 
called the {\bf planar postcritical set}
(or {\bf finite postcritical set}) 
 of $G.$
We say that a polynomial semigroup $G$ is 
{\bf postcritically bounded} if 
$P^{\ast }(G)$ is bounded in $\CC .$ 
\end{enumerate}
\end{df}
\begin{rem}
\label{pcbrem}
Let $G$ be a rational semigroup 
generated by a family $\Lambda $ of rational maps. 
Then, we have that 
$P(G)=\overline{\bigcup _{g\in G\cup \{ Id\} }\ g(\bigcup _{h\in \Lambda }CV(h))}$, 
where Id denotes the identity map on $\CCI .$   
Thus $g(P(G))\subset P(G)$ for each $g\in G.$  
From this formula, one can figure out how the set 
$P(G)$ (resp. $P^{\ast }(G)$) spreads in $\CCI $ (resp. $\CC $). 
In fact, in Section~\ref{Const}, using the above formula, 
we present a way to construct examples of postcritically bounded 
polynomial semigroups (with some additional properties). Moreover, 
from the above formula, one may, in the finitely generated case, 
use a computer to see if a polynomial semigroup $G$ is postcritically bounded much in the same way 
as one verifies the boundedness of the critical orbit for the maps $f_{c}(z)=z^{2}+c.$   
\end{rem}
\begin{ex}
\label{realpcbex}
Let 
$\Lambda := \{ h(z)=cz^{a}(1-z)^{b}\mid 
a,b\in \NN  ,\ c>0,\  
c(\frac{a}{a+b})^{a}(\frac{b}{a+b})^{b}$ $\leq 1\} $ 
and let $G$ be the polynomial semigroup generated by 
$\Lambda .$ 
Since for each $h\in \Lambda $, 
$h([0,1])\subset [0,1]$ and 
$CV^{\ast }(h)\subset [0,1]$, 
it follows that each subsemigroup $H$ of $G$ is postcritically 
bounded. 
\end{ex}
\begin{rem}
\label{pcbound}
It is well-known that for a polynomial $g$ with 
$\deg (g)\geq 2$, 
$P^{\ast }(\langle g\rangle )$ is bounded in $\CC $ if and only if 
$J(g)$ is connected (\cite[Theorem 9.5]{M}).
\end{rem}
As mentioned in Remark~\ref{pcbound}, 
 the planar postcritical set is one 
piece of important information 
regarding the dynamics of polynomials. 
Concerning 
the theory of iteration of quadratic polynomials, 
we have been investigating the famous ``Mandelbrot set''.
    
When investigating the dynamics of polynomial semigroups, 
it is natural for us to 
discuss the relationship between 
  the planar postcritical set and the 
  figure of the Julia set.
The first question in this 
regard 
is: 
\begin{ques}
Let $G$ be a polynomial semigroup such that each 
element $g\in G$ is of degree two or more.
Is $J(G)$ necessarily connected when $P^{\ast }(G)$ is 
bounded in $\CC $?
\end{ques}
The answer is {\bf NO.}
\begin{ex}[\cite{SY}]
Let $G=\langle z^{3}, \frac{z^{2}}{4}\rangle .$ 
Then $P^{\ast }(G) =\{ 0\} $ 
(which is bounded in $\CC $)
and $J(G)$ is disconnected ($J(G)$ is a Cantor set 
of round circles). Furthermore,\ 
according to 
\cite[Theorem 2.4.1]{S5},  
it can be shown that 
a small					 
perturbation $H$ of $G$ 
 still satisfies that 
 $P^{\ast }(H) $ is 
 bounded in $\CC $  and that $J(H)$ is disconnected. 
 ($J(H)$ is a 
Cantor set of quasi-circles with uniform dilatation.)
\end{ex}
\begin{ques}
What happens if $P^{\ast }(G) $ is bounded in $\CC $ 
and $J(G)$ is disconnected? 
\end{ques}
\begin{prob}
Classify postcritically bounded polynomial semigroups.
\end{prob}
\begin{df} 
Let ${\cal G} $ be the set of all polynomial semigroups 
$G$ with the following 
properties:
\begin{itemize}
\item 
 each element of $G$ is of degree 
two or more, and  
\item  $P^{\ast }(G)$ is 
bounded in $\CC $, i.e., $G$ is postcritically bounded.
\end{itemize}   
Furthermore, we set 
${\cal G}_{con}=
\{ G\in {\cal G}\mid 
J(G)\mbox{ is connected}\} $ and 
${\cal G}_{dis}=
\{ G\in {\cal G}\mid 
J(G)\mbox{ is disconnected}\}.$ 
\end{df}  

We also investigate the dynamics of hyperbolic or semi-hyperbolic 
polynomial semigroups. 
\begin{df}
Let $G$ be a rational semigroup. 
\begin{enumerate}
\item 
We say that 
$G$ is hyperbolic if $P(G)\subset F(G).$ 
\item We say that $G$ is semi-hyperbolic if 
there exists a number $\delta >0$ and a 
number $N\in \NN $ such that,  
for each $y\in J(G)$ and each $g\in G$, 
we have $\deg (g:V\rightarrow B(y,\delta ))\leq N$ for 
each connected component $V$ of $g^{-1}(B(y,\delta ))$, 
where $B(y,\delta )$ denotes the ball of radius $\delta $ 
with center $y$ with respect to the spherical distance, 
and $\deg (g:\cdot \rightarrow \cdot )$ denotes the 
degree of finite branched covering. 
(For the background of semi-hyperbolicity, see \cite{S1} and \cite{S4}.) 
\end{enumerate}
\end{df} 
\begin{rem}
There are many nice properties of hyperbolic or semi-hyperbolic 
rational semigroups. For example, for a finitely generated 
semi-hyperbolic rational semigroup $G$ , there exists an attractor in the Fatou set (\cite{S1, S4}),  
and the Hausdorff dimension $\dim _{H}(J(G))$ of the Julia set is less than or equal to the 
critical exponent $s(G)$ of the Poincar\'{e} series of $G$ (\cite{S1}).  
If we assume further the ``open set condition'', 
then $\dim _{H}(J(G))=s(G)$ (\cite{S6, SU3}). 
Moreover, if $G\in {\cal G}$ is generated by a compact set $\Gamma $ and if $G$ is semi-hyperbolic, 
then for each sequence $\gamma \in \Gamma ^{\NN }$, 
the basin of infinity for $\gamma $ is a John domain and the Julia set of $\gamma $ is connected and locally connected 
(\cite{S4}). This fact will be used in the proofs of the main results of this paper.       
\end{rem}

In this paper, we 
classify the 
semi-hyperbolic, postcritically bounded, 
polynomial semigroups generated by a compact family $\G $ of 
polynomials.  
We show that such a semigroup $G$ satisfies either  
(I) every fiberwise Julia 
set is a quasicircle 
with uniform distortion, 
or (II) for almost 
every sequence $\g \in \GN $, the Julia set $J_{\g }$ is a 
Jordan curve but not a quasicircle, the basin of infinity 
$A_{\g }$ is a John domain, and the bounded component $U_{\g }$ of the Fatou 
set is not a John domain, or (III) for every $\alpha ,\beta \in \GN $, 
the intersection of the Julia sets $J_{\alpha }$ and $J_{\beta }$ is not empty, 
and $J(G)$ is arcwise connected (cf. Theorem~\ref{mainthran1}). 
Furthermore, we also 
classify the 
hyperbolic, postcritically bounded, 
polynomial semigroups generated by a compact family $\G $ of 
polynomials.  
We show that such a semigroup $G$ satisfies either 
(I) above, or (II) above, or (III)': for every $\alpha ,\beta \in \GN $, 
the intersection of the Julia sets $J_{\alpha }$ and $J_{\beta }$ is not empty, $J(G)$ is arcwise connected, and for every sequence $\g \in \GN $, 
there exist infinitely many bounded components of $F_{\g }$ 
(cf. Theorem~\ref{mainthran2}). We give some examples 
of 
situation (II) above 
(cf. 
Example~\ref{jbnqexfirst}, figure~\ref{fig:dcgraph},  
Example~\ref{jbnqex},  and 
Section~\ref{Const}). 
  Note that 
situation (II) above is 
a 
special phenomenon of 
  random dynamics of polynomials that  
  does not occur in the usual dynamics of polynomials. 

The key to 
investigating the 
dynamics of 
postcritically bounded polynomial semigroups is 
the density of repelling fixed points in the Julia set (cf. 
Theorem~\ref{repdense}), which 
can be shown by 
an application of 
the Ahlfors five island theorem, and the lower semi-continuity of 
$\g \mapsto J_{\g }$ (Lemma~\ref{fibfundlem}-\ref{fibfundlem2}), which is a consequence of potential theory.   
 The key to 
investigating the 
dynamics of semi-hyperbolic polynomial semigroups is, 
the continuity of the map $\g \mapsto J_{\g }$ 
(this is highly nontrivial; see \cite{S1}) 
and the 
Johnness of the basin $A_{\g }$ of infinity (cf. \cite{S4}). 
Note that the continuity of the map $\g \mapsto J_{\g }$ 
does not hold in general, if we do not assume semi-hyperbolicity. 
 Moreover, one of the 
original aspects 
of this paper 
is the idea of 
 ``combining both 
the theory of rational semigroups and 
 that of random complex dynamics". It is quite natural to 
investigate both fields simultaneously. However, 
no study 
thus far has done so.

 Furthermore, in Section~\ref{Const},  
we 
provide 
a way of constructing examples of 
 postcritically bounded polynomial semigroups 
 with 
some additional properties (disconnectedness of Julia set, 
semi-hyperbolicity, hyperbolicity, etc.) 
(cf. Lemma~\ref{Constprop}, \ref{shshfinprop}, \ref{l:omegahopen}, \ref{l:twohypp}, \ref{l:disjfinv}).  
By using this, we will see how easily situation (II) above occurs, 
and we obtain many examples of 
situation (II) above.

As wee see in Example~\ref{realpcbex} and Section~\ref{Const}, 
it is not difficult to construct many examples,  
it is not difficult to verify the hypothesis ``postcritically 
bounded'', 
and the class of postcritically bounded polynomial semigroups is 
very wide.  

 Throughout the paper, we will see some phenomena in polynomial 
 semigroups or random dynamics of polynomials that do not occur in 
 the usual dynamics of polynomials. Moreover, those phenomena and their mechanisms are   
 systematically investigated.
 
 In Section~\ref{Main}, we present the main results 
 of this paper. We give some tools in Section~\ref{Tools}. 
 The proofs of the main results are given in Section~\ref{Proofs}. 
In Section~\ref{Const}, we present many examples.  
\ 

There are many applications of the results of postcritically 
bounded polynomial semigroups in many directions. 
 In subsequent papers \cite{S8,Snew}, we 
 will investigate 
Markov process on $\CCI $ associated with 
the random dynamics of polynomials and 
we will consider the probability $T_{\infty }(z)$ 
of tending to $\infty \in \CCI $ 
starting with the initial value $z\in \CCI .$ 
It will be shown in \cite{S8,Snew} that 
if the associated polynomial semigroup $G$ 
is postcritically bounded and the Julia set is 
disconnected, then the function $T_{\infty }$ defined on $\CCI $ 
has many interesting properties which are 
similar to those of the Cantor function. 
For example, under certain conditions, 
$T_{\infty }$ is continuous on $\CCI$, varies precisely on $J(G)$ which is a thin 
fractal set, 
and $T_{\infty }$ has a kind of monotonicity.  
Such a kind of ``singular functions on the 
complex plane'' appear very naturally in 
random dynamics of polynomials 
and the 
study of the dynamics of postcritically polynomial semigroups are the keys to 
investigating that. 
(The above results have been announced in \cite{S9, S10}.) 

 Moreover, as illustrated before, 
 it is very important for us to recall that 
 the complex dynamics can be applied to describe some mathematical models. 
 For example, the behavior of the population of a 
 certain species can be described as the dynamical systems
  of a polynomial $h$ such that $h$ preserves the unit interval  
 and the postcritical set in the plane is bounded. 
 When one considers such a model, 
 it is very natural to consider the random dynamics of 
 polynomials with bounded postcritical set in the plane 
 (see Example~\ref{realpcbex}).    

In \cite{SdpbpI}, we 
investigate the dynamics of postcritically bounded polynomial semigroups $G$ 
which is possibly generated by a non-compact family. The structure of the Julia set is 
deeply studied, and for such a $G$ with disconnected Julia set, it is shown that $J(G)\subset \CC $, 
and that if $A$ and $B$ are two connected components of $J(G)$, 
then one of them surrounds the other. Therefore the space ${\cal J}_{G}$ of all connected components of 
$J(G)$ has an intrinsic total order. 
Moreover, we show that for each $n\in \NN \cup \{ \aleph _{0}\} $, 
there exists a finitely generated postcritically bounded polynomial semigroup $G$ 
such that the cardinality of the space of all connected components of $J(G)$ is equal to $n.$ 
In \cite{S11}, by using the results in \cite{SdpbpI}, 
we investigate the fiberwise (random) dynamics of polynomials which are associated with 
a postcritically bounded polynomial semigroup $G.$ We will present some sufficient conditions 
for a fiberwise Julia set to be a Jordan curve but not a quasicircle. 
Moreover, we will investigate the limit functions of the fiberwise dynamics.     
 In the subsequent paper \cite{SS}, we will give some 
further results on postcritically 
 bounded polynomial semigroups, based on \cite{SdpbpI} and this paper. 
 Moreover, in the subsequent paper \cite{S7}, 
 we will define a new kind of cohomology theory, in order to 
 investigate the action of finitely generated semigroups, and 
we will apply it to the study of 
the dynamics of postcritically bounded polynomial semigroups.  

\ 

\noindent {\bf Acknowledgement:} 
The author 
thanks 
R. Stankewitz for many 
valuable comments.  
\section{Main results}
\label{Main}
In this section we present the statements of the main results.
The proofs are given in Section~\ref{Proofs}.
In order to present the main results, 
we need some notations and definitions. 
\begin{df}
We set 
Rat : $=\{ h:\CCI \rightarrow \CCI \mid 
h \mbox { is a non-constant rational map}\} $
endowed with the 
distance $\eta $ which is defined by $\eta (h_{1},h_{2}):=\sup _{z\in \CCI }d(h_{1}(z),h_{2}(z))$, 
where $d$ denotes the spherical distance on $\CCI .$ 
We set 
Poly :$=\{ h:\CCI \rightarrow \CCI 
\mid h \mbox{ is a non-constant polynomial}\} $ endowed with 
the relative topology from Rat.   
Moreover, we set 
Poly$_{\deg \geq 2}
:= \{ g\in \mbox{Poly}\mid \deg (g)\geq 2\} $ 
endowed with the relative topology from 
Rat.  
\end{df}
\begin{rem}
Let $d\geq 1$,  $\{ p_{n}\} _{n\in \NN }$ a 
sequence of polynomials of degree $d$, 
and $p$ a polynomial.  
Then, $p_{n}\rightarrow p$ in Poly if and only if 
the coefficients converge appropriately and $p$ is of degree $d.$ 
\end{rem}
\begin{df}
 For a polynomial semigroup $G$,\ we set 
$$ \hat{K}(G):=\{ z\in \CC 
\mid \bigcup _{g\in G}\{ g(z)\} \mbox{ is bounded in }\CC \} $$ 
and call $\hat{K}(G)$ the {\bf smallest filled-in Julia set} of 
$G.$ 
For a polynomial $g$, we set $K(g):= \hat{K}(\langle g\rangle ).$ 
\end{df}
\begin{df}
For a set $A\subset \CCI $, we denote by int$(A)$ the set of 
all  
interior points of $A.$ 
\end{df} 

\begin{df}[\cite{S1,S4}]
\ 
\begin{enumerate}
\item  Let $X$ be a compact metric space, 
$g:X\rightarrow X$ a continuous 
map, and $f:X\times \CCI \rightarrow 
X\times \CCI $ a continuous map. 
We say that $f$ is a rational skew 
product (or fibered rational map on 
trivial bundle $X\times \CCI $) 
over $g:X\rightarrow X$, if 
$\pi \circ f=g\circ \pi $ where 
$\pi :X\times \CCI \rightarrow X$ denotes 
the canonical projection,\ and 
if, for each $x\in X$, the 
restriction 
$f_{x}:= f|_{\pi ^{-1}(\{ x\} )}:\pi ^{-1}(\{ x\}) \rightarrow 
\pi ^{-1}(\{ g(x)\} )$ of $f$ is a 
non-constant rational map,\ 
under the canonical identification 
$\pi ^{-1}(\{ x'\} )\cong \CCI $ for each 
$x'\in X.$ Let $d(x)=\deg (f_{x})$, for each 
$x\in X.$
Let $f_{x,n}$ be 
the rational map 
defined by: $f_{x,n}(y)=\pi _{\CCI }(f^{n}(x,y))$, 
for each $n\in \NN ,x\in X$ and $y\in \CCI $,   
where    
$\pi _{\CCI }:X\times \CCI 
\rightarrow  \CCI $ is the projection map.

  Moreover, if $f_{x,1}$ is a polynomial for each $x\in X$, 
 then we say that $f:X\times \CCI \rightarrow X\times \CCI $ is a 
 polynomial skew product over $g:X\rightarrow X.$  
\item  
Let $\G $ be a compact subset of Rat. 
We set  $\GN := \{ \g =(\g _{1}, \g_{2},\ldots )\mid \forall j,\g _{j}\in \G\} $ endowed with the product topology. This is a compact metric space.
Let  
$\sigma :\GN \rightarrow \GN $ be the shift map, which is defined by  
$\sigma (\g _{1},\g _{2},\ldots ):=(\g _{2},\g _{3},\ldots )
.$ Moreover,\ 
we define a map
$f:\GN \times \CCI \rightarrow 
\GN \times \CCI $ by:
$(\g ,y) \mapsto (\sigma (\g ),\g _{1}(y)),\ $
where $\g =(\g_{1},\g_{2},\ldots ).$
This is called 
{\bf the skew product associated with 
the family $\G $ of rational maps.} 
Note that $f_{\g ,n}(y)=\g _{n}\circ \cdots \circ \g _{1}(y).$ 
\end{enumerate}

\end{df}
\begin{rem}
Let $f:X\times \CCI \rightarrow X\times \CCI  $ 
be a rational skew product over 
$g:X\rightarrow X.$ Then, the function 
$x\mapsto d(x)$ is continuous on $X.$ 
\end{rem}
\begin{df}[\cite{S1, S4}]
Let $f:X\times \CCI 
\rightarrow X\times \CCI $ be a 
rational skew product over 
$g:X\rightarrow X.$ Then, we use the following notation. 
\begin{enumerate}
\item For each $x\in X$ and $n\in \NN $, we set 
$f_{x}^{n}:=
f^{n}|_{\pi ^{-1}(\{ x\} )}:\pi ^{-1}(\{ x\} )\rightarrow 
\pi ^{-1}(\{ g ^{n}(x)\} )\subset X\times \CCI .$
\item For each $x\in X$,  
we denote by $F_{x}(f)$ 
the set of points 
$y\in \CCI $ which have a neighborhood $U$ 
in $\CCI $ such that 
$\{ f_{x,n}:U\rightarrow 
\CCI \} _{n\in \NN }$
is normal. Moreover, we set 
$F^{x}(f):= \{ x\} \times F_{x}(f)\ (\subset X\times \CCI ).$  
\item For each $x\in X$, 
we set 
$J_{x}(f):=\CCI \setminus 
F_{x}(f).$ Moreover, we set 
$J^{x}(f):= \{ x\} \times J_{x}(f)$ $ (\subset X\times \CCI ).$ 
These sets $J^{x}(f)$ and $J_{x}(f)$ are called the 
fiberwise Julia sets.
\item We set 
$\tilde{J}(f):=
\overline {\bigcup _{x\in X}J^{x}
(f)}$, where the closure is taken in the product space $X\times \CCI .$
\item For each $x\in X$, we set 
$\hat{J}^{x}(f):=\pi ^{-1}(\{ x\} )\cap \tilde{J}(f).$ 
Moreover, we set $\hat{J}_{x}(f):= 
\pc (\hat{J}^{x}(f)).$ 
\item We set $\tilde{F}(f):=(X\times \CCI)\setminus 
\tilde{J}(f).$
\end{enumerate}

\end{df}
\begin{rem}
We have $\hat{J}^{x}(f)\supset J^{x}(f)$ and 
$\hat{J}_{x}(f)\supset J_{x}(f).$ 
However, 
strict containment can occur. 
For example, let $h_{1}$ be a polynomial having a Siegel disk 
with center $z_{1}\in \CC .$ 
Let $h_{2}$ be a polynomial such that 
$z_{1}$ is a repelling fixed point of $h_{2}.$ 
Let $\G =\{ h_{1},h_{2}\} .$  
Let $f:\G \times \CCI \rightarrow \G \times \CCI $ be 
the skew product associated with the family $\G .$ 
Let $x =(h_{1},h_{1},h_{1},\ldots )\in \GN .$ 
Then, $(x,z_{1})\in \hat{J}^{x}(f)\setminus  J^{x}(f)$ and 
$z_{1}\in \hat{J}_{x}(f)\setminus J_{x}(f).$ 
\end{rem}
\begin{df}
Let $f:X\times \CCI \rightarrow X\times \CCI $ be a 
polynomial skew product over $g:X\rightarrow X.$ 
Then for each $x\in X$, we set
$K_{x}(f):=
\{ y\in \CCI  \mid 
\{ f_{x,n}(y)\} _{n\in \NN }
\mbox{ is bounded } $ in $\CC \}  $, 
 and 
 $A_{x}(f):=\{ y\in \CCI 
\mid f_{x,n}(y)\rightarrow \infty 
,\ n\rightarrow \infty \} .$
Moreover, we set 
$K^{x}(f):= \{ x\} \times K_{x}(f) \ (\subset 
X\times \CCI ) $ and 
$A^{x}(f):= \{ x\} \times A_{x}(f)\ (\subset X\times \CCI ).$ 
\end{df}
\begin{df}
Let $f:X\times \CCI 
\rightarrow X\times \CCI $ be a 
rational skew product over 
$g:X\rightarrow X.$ 
We set
$$C(f):= \{ (x,y)\in X\times \CCI \mid y \mbox{ is a critical point of }
f_{x,1}\} .$$ 
Moreover, we set   
$P(f):=\overline{\bigcup _{n\in \NN  }f^{n}(C(f))}, $
where the closure is taken in the product space $X\times \CCI .$ 
This $P(f)$ is called the {\bf fiber-postcritical set} of 
$f.$ 

 We say that $f$ is hyperbolic 
(along fibers) if 
$P(f)\subset F(f).$
\end{df}
\begin{df}[\cite{S1}]
Let $f:X\times \CCI \rightarrow X\times \CCI $ be a rational skew 
product over $g:X\rightarrow X.$ Let $N\in \NN .$ 
We say that a point $(x_{0},y_{0})\in X\times \CCI $ belongs to 
$SH_{N}(f)$ if there exists a neighborhood $U$ of 
$x_{0}$ in $X$ and a positive number $\delta $ such that 
for any $x\in U$, any $n\in \NN $, any $x_{n}\in g^{-n}(x)$, 
and any connected component 
$V$ of $(f_{x_{n},n})^{-1}(B(y_{0},\delta ))$, 
$\deg (f_{x_{n},n}:V\rightarrow B(y_{0},\delta ))\leq N.$ 
Moreover, we set 
$UH(f):= (X\times \CCI )\setminus \cup _{N\in \NN }SH_{N}(f).$ 
We say that $f$ is semi-hyperbolic (along fibers) if 
$UH(f)\subset \tilde{F}(f).$ 
\end{df}
\begin{rem}
\label{hypskewsemigrrem}
Let $\G $ be a compact subset of Rat and let 
$f:\GN \times \CCI \rightarrow \GN \times \CCI $ be the skew product 
associated with $\G .$  Let $G$ be the rational semigroup generated by 
$\G . $ Then, by Lemma~\ref{fiblem}-\ref{pic}, 
it is easy to see that $f$ is semi-hyperbolic if and only if 
$G$ is semi-hyperbolic. Similarly, it is easy to see that 
$f$ is hyperbolic if and only if $G$ is hyperbolic. 
\end{rem}
\begin{df}
Let $K\geq 1.$  A Jordan curve $\xi $ in $\CCI $ is said to be 
a $K$-quasicircle, if $\xi $ is the image of $S^{1}(\subset \CC )$  
under a $K$-quasiconformal 
homeomorphism $\varphi :\CCI \rightarrow \CCI .$  
(For the definition of a quasicircle and a quasiconformal homeomorphism, see 
\cite{LV}.) 
\end{df} 
\begin{df}
Let $V$ be a subdomain of $\CCI $ such that 
$\partial V\subset \CC .$  
We say that $V$ is a John domain if there exists a 
constant $c>0$ and a point $z_{0}\in V$ ($z_{0}=\infty $ when 
$\infty \in V$) satisfying the following:  
for all $z_{1}\in V$ there exists an arc $\xi \subset V$ connecting 
$z_{1}$ to $z_{0}$ such that 
for any $z\in \xi $, we have 
$\min \{ |z-a|\mid a\in \partial V\} \geq c|z-z_{1}|.$
(Note: in this paper, if we consider a John domain $V$, we require 
that $\partial V\subset \CC .$ 
However, in the original notion of John domain, 
more general concept of John domains $V$ was given, without assuming $\partial V\subset \CC $ (\cite{NV}).)    
\end{df}
\begin{rem}
Let $V$ be a simply connected domain in $\CCI $ such that 
$\partial V\subset \CC .$ 
It is well-known that  
if $V$ is a John domain, then 
$\partial V$ is locally connected (\cite[page 26]{NV}). 
Moreover, a Jordan curve $\xi \subset \CC $ is a 
quasicircle if and only if both components of $\CCI \setminus \xi $ are 
John domains (\cite[Theorem 9.3]{NV}).  
\end{rem}

\begin{df}
Let $X$ be a complete metric space. 
A subset $A$ of $X$ is said to be residual if 
$X\setminus A$ is a countable union of nowhere dense subsets of 
$X.$ Note that by Baire Category Theorem, a residual set $A$ is dense in $X.$ 
\end{df}
\begin{df}
For any connected sets $K_{1}$ and 
$K_{2}$ in $\CC ,\ $  ``$K_{1}\leq K_{2}$'' indicates that 
$K_{1}=K_{2}$, or $K_{1}$ is included in 
a bounded component of $\CC \setminus K_{2}.$ Furthermore, 
``$K_{1}<K_{2}$'' indicates $K_{1}\leq K_{2}$ 
and $K_{1}\neq K_{2}.$ Note that 
``$ \leq $'' is a partial order in 
the space of all non-empty compact connected 
sets in $\CC .$ This ``$\leq $" is called 
the {\bf surrounding order.} 
\end{df}

 Let $\tau $ be a Borel probability measure on Poly$_{\deg \geq 2}.$  
We consider the independent and identically distributed (abbreviated by i.i.d.) random dynamics on $\CCI $ such that 
at every step we choose a polynomial map 
$h:\CCI \rightarrow \CCI $ according to the distribution $\tau .$ 
(Hence, this is a kind of Markov process on $\CCI .$ )  
\begin{df} 
For a Borel probability measure $\tau $ on  Poly$_{\deg \geq 2}$, 
we denote by $\G_{\tau }$ the topological support of $\tau $ in  
Poly$_{\deg \geq 2}.$ (Hence, $\G_{\tau }$ is a closed set in 
Poly$_{\deg \geq 2}.$) 
Moreover, we denote by $\tilde{\tau } $ the infinite product measure $\otimes _{j=1}^{\infty } \tau .$  
This is a Borel probability measure on 
$\G_{\tau }^{\NN }.$ 
 Furthermore, we denote by $G_{\tau }$ the polynomial semigroup 
generated by $\G _{\tau }.$  
\end{df} 
We present a result on compactly generated, semi-hyperbolic, 
polynomial semigroups in ${\cal G}.$ 
\begin{thm}
\label{mainthran1}
Let $\G $ be a non-empty compact subset of 
{\em Poly}$_{\deg \geq 2}.$ 
Let 
$f:\G ^{\NN }\times \CCI \rightarrow 
\G ^{\NN }\times \CCI $ be the skew product 
associated with the family $\G $ of polynomials.
Let $G$ be the polynomial semigroup generated by $\G .$ 
Suppose that $G \in {\cal G}$ and 
that $G$ is semi-hyperbolic. 
Then, exactly one of the following three statements 
\ref{mainthran1-1}, \ref{mainthran1-2}, and 
\ref{mainthran1-3} holds.
\begin{enumerate}
\item 
\label{mainthran1-1}
$G$ is hyperbolic. Moreover, there exists a constant 
$K\geq 1$ such that for each $\g \in \G ^{\NN }$, 
$J_{\g }(f)$ is a $K$-quasicircle.
\item \label{mainthran1-2}
There exists a residual Borel subset ${\cal U}$ of 
$\G^{\NN }$ such that,  
for each Borel probability measure 
$\tau $ on {\em Poly}$_{\deg \geq 2}$ with $\G _{\tau }=\G $, we have 
$\tilde{\tau }({\cal U})=1$, and such that,  
for each $\g \in {\cal U}$, 
$J_{\g }(f)$ is a Jordan curve but not a quasicircle,  
$A_{\g }(f)$ is a John domain, 
and the bounded component of $F_{\g }(f)$ is not a 
John domain.
Moreover, there exists a dense subset ${\cal V}$ of 
$\G ^{\NN }$ such that,  
for each $\g \in {\cal V}$, $J_{\g }(f)$ is not a 
Jordan curve.  
Furthermore, there exist two elements $\alpha ,\beta \in \G ^{\NN }$ such that 
$J_{\beta }(f)<J_{\alpha }(f).$ (Remark: by Lemma~\ref{fibconnlem}, 
for each $\rho \in \GN $, $J_{\rho }(f)$ is connected.)  
\item \label{mainthran1-3}
There exists a dense subset ${\cal V}$ of $\G ^{\NN }$ such that 
for each $\g \in {\cal V}$, $J_{\g }(f)$ is not a Jordan curve. 
Moreover, for each  $\alpha ,\beta \in \G ^{\NN }$, 
$J_{\alpha }(f)\cap J_{\beta }(f)
\neq \emptyset .$ Furthermore, 
$J(G)$ is arcwise connected. 
\end{enumerate}
\end{thm}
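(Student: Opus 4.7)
The plan is to organise the argument around two successive dichotomies, using as the main tools the continuity of $\g \mapsto J_{\g }(f)$ under semi-hyperbolicity (\cite{S1}), the Johnness of $A_{\g }(f)$ (\cite{S4}), the density of repelling fixed points of $G$ in $J(G)$ (Theorem~\ref{repdense}), and the surrounding order ``$\leq $'' on connected compact subsets of $\CC $. Structurally, each $J_{\g }(f)$ is connected (Lemma~\ref{fibconnlem}) and locally connected (John boundary), the map $\g \mapsto J_{\g }(f)$ is Hausdorff continuous, and $J(G)=\overline{\bigcup _{\g \in \GN }J_{\g }(f)}$. Because $P^{\ast }(G)$ is bounded in $\CC $, disjointness $J_{\alpha }(f)\cap J_{\beta }(f)=\emptyset $ forces a strict surrounding $J_{\beta }(f)<J_{\alpha }(f)$ or the reverse.

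I would first split on whether (A) every pair $J_{\alpha }(f),J_{\beta }(f)$ meets, or (B) there exist $\alpha ,\beta \in \GN $ with $J_{\beta }(f)<J_{\alpha }(f)$. In Case (A), statement~\ref{mainthran1-3} must be established: arcwise connectivity of $J(G)$ is obtained by chaining through fibers, since each $J_{\g }(f)$ is a locally connected continuum and consecutive fibers meet by assumption; uniform local connectivity via continuity then handles the closure $J(G)=\overline{\bigcup _{\g }J_{\g }}$. For the dense set ${\cal V}$ of sequences with non-Jordan $J_{\g }(f)$, I would use Theorem~\ref{repdense} to place a repelling fixed point of some $g\in G$ simultaneously on two distinct fiberwise Julia sets, then arrange the initial word of $\g $ so that two branches of $g^{-n}$ pull back and force a pinch on $J_{\g }(f)$.

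In Case (B), the strict nesting $J_{\beta }<J_{\alpha }$ is robust under perturbations of $\alpha ,\beta $ by continuity. Pulling back through initial words of arbitrary $\g \in \GN $, I would construct for every $\g $ a nested sequence of topological annuli trapping $J_{\g }(f)$. Set
\[
{\cal U}:=\{ \g \in \GN : J_{\g }(f)\text{ is a Jordan curve},\ A_{\g }(f)\text{ is John},\ \text{bounded component of }F_{\g }(f)\text{ is not John}\} .
\]
Residuality of ${\cal U}$ follows by writing each condition as a countable intersection of Hausdorff-open conditions on $\g $ (via continuity of $\g \mapsto J_{\g }(f)$) and applying Baire category; $\tilde{\tau }({\cal U})=1$ for every $\tau $ with $\G _{\tau }=\G $ follows from a Kolmogorov $0$--$1$ argument applied to the shift-tail events involved. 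The dense set of non-Jordan sequences ${\cal V}$ and the existence of $\alpha ,\beta $ with $J_{\beta }<J_{\alpha }$ are then corollaries of the same annular construction combined with Theorem~\ref{repdense}. Within Case (B) I would finally split on hyperbolicity: if $G$ is hyperbolic, uniform expansion upgrades the bilateral Johnness of $J_{\g }(f)$ to a uniform $K$-quasicircle estimate (statement~\ref{mainthran1-1}); if $G$ is merely semi-hyperbolic, some critical point sits in $J(G)$ and its $G$-orbit lies in $P^{\ast }(G)\cap J(G)$, obstructing the John condition on the bounded Fatou side for a residual/almost-every itinerary, yielding statement~\ref{mainthran1-2}.

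The main obstacle, I expect, is the combined residual-and-full-measure part of statement~\ref{mainthran1-2}: simultaneously preserving the Jordan curve property on ${\cal U}$ \emph{and} forcing the bounded Fatou component to fail to be John on the same set. For this I would lean on a Borel--Cantelli / ergodicity argument using $\G _{\tau }=\G $ to guarantee that $\tilde{\tau }$-a.e.\ itinerary visits every neighbourhood of a critical point of $J(G)$ infinitely often, while the annular nesting and the continuity of $\g \mapsto J_{\g }(f)$ from Case (B) protect the Jordan curve and outer John properties throughout.
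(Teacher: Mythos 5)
Your top-level case division does not match the actual trichotomy, and two of your claimed implications are false on examples contained in the paper itself. The paper's first split is on whether \emph{every} $J_{\g }(f)$ is a Jordan curve: if so, Lemma~\ref{060416lema} puts every fiberwise critical point in $F^{\g }(f)$, whence $C(f)\subset \tilde{F}(f)$, semi-hyperbolicity gives $P(f)\subset \tilde{F}(f)$, so $G$ is hyperbolic and Theorem~\ref{hypskewqc} yields the uniform $K$-quasicircle conclusion; only in the complementary case (some $J_{\rho }(f)$ not a Jordan curve) does one split on intersection versus disjointness of fibers. Your Case (A) (``all pairs meet'') does not imply statement~\ref{mainthran1-3}: for $\G =\{ h\} $ with $h$ hyperbolic and $J(h)$ a quasicircle, all pairs trivially meet yet statement~\ref{mainthran1-1} holds, and your attempt to manufacture a dense set of non-Jordan fibers from Theorem~\ref{repdense} cannot work since no non-Jordan fiber need exist. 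Likewise your Case (B) can coexist with statement~\ref{mainthran1-1} (e.g.\ $\langle z^{3},z^{2}/4\rangle $: strictly nested round circles), and your final split ``hyperbolic $\Rightarrow $ statement~\ref{mainthran1-1}, merely semi-hyperbolic $\Rightarrow $ statement~\ref{mainthran1-2}'' is contradicted by Example~\ref{jbnqexfirst}, where $G$ is hyperbolic and statement~\ref{mainthran1-2} holds. The discriminator is the existence of a non-Jordan fiber, which must be carried as a hypothesis into the disjoint-pair case (Claim 4 of the paper), with ${\cal V}=\bigcup _{n}(\sigma ^{n})^{-1}(\{ \rho \} )$ and backward propagation of the non-Jordan property via Lemma~\ref{060416lema}.

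The analytic core of statement~\ref{mainthran1-2} is also missing from your sketch: you must prove that for $\g $ in ${\cal U}$ the fiber $J_{\g }(f)$ is \emph{simultaneously} a Jordan curve and \emph{not} a quasicircle, and neither half follows from ``annular nesting'' or a Borel--Cantelli argument. The paper gets the Jordan property from Lemma~\ref{mainthranlem1}: accumulation of $\sigma ^{n_{k}}(\g )$ on a periodic $\alpha $ with $J_{\beta }(f)<J_{\alpha }(f)$ forces int$(K_{\g }(f))$ to be connected (a Riemann--Hurwitz count combined with the connectivity of all fiberwise Julia sets from Lemma~\ref{fibconnlem}), and then Proposition~\ref{shonecomp} upgrades this to a Jordan curve using the John property of $A_{\g }(f)$ and \cite[Lemma 5.1]{PT}. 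The failure of the quasicircle property comes from Lemmas~\ref{mainthranlem2} and \ref{060416lemast}: one produces a fiber $\omega $ at which two external rays land at a common point and cut off a piece of $K_{\omega }(f)$ of arbitrarily small diameter, and then pulls this configuration back along $\g $ with Koebe distortion to violate the arc-diameter (Ahlfors) characterization of quasicircles. Finally, the non-Johnness of the bounded Fatou component is not proven by locating critical points on $J(G)$; it is an automatic consequence of ``Jordan curve, outer component John, not a quasicircle'' via \cite[Theorem 9.3]{NV}. Without these ingredients the proposal does not establish statement~\ref{mainthran1-2}, and with the flawed case division it does not establish the trichotomy either.
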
 
\begin{cor}
\label{rancor2}
Let 
$\G $ be a non-empty compact subset of 
{\em Poly}$_{\deg \geq 2}.$  
Let 
$f:\G ^{\NN }\times \CCI \rightarrow 
\G ^{\NN }\times \CCI $ be the skew product 
 associated with the family $\G $ of polynomials. 
Let $G$ be the polynomial semigroup generated by 
$\G .$ 
Suppose that $G \in {\cal G}_{dis}$ and that $G$ is 
semi-hyperbolic. Then, either statement \ref{mainthran1-1} or 
statement \ref{mainthran1-2} in Theorem~\ref{mainthran1} holds. 
In particular, for any Borel Probability measure 
$\tau $ on {\em Poly}$_{\deg \geq 2}$ with $\G _{\tau }=\G $, 
for almost every $\g \in \G _{\tau }^{\NN }$ with respect to 
$\tilde{\tau }$, $J_{\g }(f)$ is a Jordan curve.  
\end{cor}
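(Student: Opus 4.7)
The plan is to deduce this corollary directly from Theorem~\ref{mainthran1} by ruling out its alternative~\ref{mainthran1-3} using the hypothesis $G\in {\cal G}_{dis}$. Since $G$ is a compactly generated, semi-hyperbolic polynomial semigroup in ${\cal G}$, Theorem~\ref{mainthran1} applies and guarantees that exactly one of the three statements \ref{mainthran1-1}, \ref{mainthran1-2}, \ref{mainthran1-3} must hold.

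The key observation is that statement~\ref{mainthran1-3} forces $J(G)$ to be arcwise connected, hence in particular connected. This contradicts the hypothesis $G\in {\cal G}_{dis}$, which says that $J(G)$ is disconnected. Therefore statement~\ref{mainthran1-3} is excluded, and we must be in case~\ref{mainthran1-1} or case~\ref{mainthran1-2}.

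For the "in particular" assertion, I would treat the two remaining cases separately. In case~\ref{mainthran1-1}, every fiberwise Julia set $J_\gamma(f)$ is a $K$-quasicircle, and in particular a Jordan curve, for \emph{every} $\gamma\in \G^{\NN}$; thus the set $\{\gamma : J_\gamma(f) \text{ is a Jordan curve}\}$ has full $\tilde{\tau}$-measure trivially for any $\tau$ with $\G_\tau=\G$. In case~\ref{mainthran1-2}, the theorem supplies a residual Borel set ${\cal U}\subset \G^{\NN}$ consisting of sequences whose Julia set is a Jordan curve (in fact, a Jordan curve that is not a quasicircle), and the theorem already asserts that $\tilde{\tau}({\cal U})=1$ whenever $\G_\tau=\G$. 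Either way, $J_\gamma(f)$ is a Jordan curve for $\tilde{\tau}$-almost every $\gamma$.

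There is no real obstacle here beyond invoking Theorem~\ref{mainthran1}; the corollary is a short extraction from that theorem, in which all of the delicate work (semi-hyperbolicity machinery, Johnness of $A_\gamma$, continuity of $\gamma\mapsto J_\gamma$, full-measure statements) has already been carried out. The only genuine step is the logical elimination of case~\ref{mainthran1-3} from the disconnectedness of $J(G)$, which is immediate.
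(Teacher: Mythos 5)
Your proposal is correct and follows essentially the same route as the paper, which simply states that the corollary follows immediately from Theorem~\ref{mainthran1}; your elimination of case~\ref{mainthran1-3} via the arcwise connectedness of $J(G)$ contradicting $G\in {\cal G}_{dis}$, and your case-by-case verification of the almost-sure Jordan curve property, are exactly the intended extraction.
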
 
We now classify compactly generated, hyperbolic, polynomial semigroups 
in ${\cal G}.$ 
\begin{thm}
\label{mainthran2}
Let $\G $ be a non-empty compact subset of {\em Poly}$_{\deg \geq 2}.$ 
Let $f:\GN \times \CCI \rightarrow \GN \times \CCI $ be the 
skew product associated with the family $\G .$ 
Let $G$ be the polynomial semigroup generated by $\G .$ 
Suppose that $G\in {\cal G}$ and that $G$ is hyperbolic. 
Then, exactly one of the following three statements \ref{mainthran2-1}, 
\ref{mainthran2-2}, and \ref{mainthran2-3}
 holds. 
\begin{enumerate}
\item 
\label{mainthran2-1}
There exists a constant $K\geq 1$ such that for each 
$\g \in \GN $, $J_{\g }(f)$ is a $K$-quasicircle.
\item 
\label{mainthran2-2}
There exists a residual Borel subset ${\cal U}$ of $\GN $ such that,  
for each Borel probability measure $\tau $ on {\em Poly}$_{\deg \geq 2}$ 
with $\G _{\tau }=\G $, we have $\tilde{\tau }({\cal U})=1$, 
and such that,  for each $\g \in {\cal U}$, $J_{\g }(f)$ is a 
Jordan curve but not a quasicircle, 
$A_{\g }(f)$ is a John domain, 
and the bounded component 
of $F_{\g }(f)$ is not a John domain.   
Moreover, there exists a dense subset ${\cal V}$ of 
$\GN $
 such that, for each $\g \in {\cal V}$, 
$J_{\g }(f)$ is a quasicircle.
Furthermore, there exists a dense subset 
${\cal W}$ of $\GN $ such that, for 
each $\g \in {\cal W}$, there are infinitely many bounded connected 
components of $F_{\g }(f).$  
\item 
\label{mainthran2-3}
For each $\g \in \GN $, there are infinitely many bounded 
connected components 
of $F_{\g }(f)$. Moreover, 
for each $\alpha ,\beta \in \GN $, 
$J_{\alpha }(f)\cap J_{\beta }(f)\neq 
\emptyset .$ Furthermore, 
$J(G)$ is arcwise connected.  
\end{enumerate}

\end{thm}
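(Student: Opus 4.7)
The plan is to deduce Theorem~\ref{mainthran2} from Theorem~\ref{mainthran1} by refining each of its three alternatives using the stronger hyperbolicity assumption. Since hyperbolicity implies semi-hyperbolicity, Theorem~\ref{mainthran1} applies and $G$ already satisfies one of its three statements. Statement~\ref{mainthran1-1} of Theorem~\ref{mainthran1} coincides with statement~\ref{mainthran2-1} here, so nothing is required in that case.

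Suppose $G$ satisfies statement~\ref{mainthran1-2} of Theorem~\ref{mainthran1}. The residual set $\mathcal{U}$ and its John-domain asymmetry transfer verbatim. What remains is to produce two additional dense subsets of $\GN$. To obtain the dense set $\mathcal{V}$ with $J_{\gamma}(f)$ a quasicircle, I would restrict to periodic sequences $\gamma=(h_{1},\ldots,h_{n})^{\infty}$, for which $J_{\gamma}(f)=J(h_{n}\circ\cdots\circ h_{1})$. Since $G$ is hyperbolic and lies in $\mathcal{G}$, each such composition is a hyperbolic polynomial with connected Julia set; for a single such polynomial, a Jordan curve Julia set is automatically a quasicircle (both complementary components are John domains by classical theory). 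Using density of $\mathcal{U}$ together with the continuity of $\gamma\mapsto J_{\gamma}(f)$ guaranteed by semi-hyperbolicity, I would extract, in every open subset of $\GN$, a periodic $\gamma$ whose compositional Julia set is a Jordan curve, hence a quasicircle. For the dense set $\mathcal{W}$ with infinitely many bounded Fatou components, I would start from the dense set in statement~\ref{mainthran1-2} of Theorem~\ref{mainthran1} on which $J_{\gamma}(f)$ is not a Jordan curve: for periodic $\gamma$, hyperbolicity together with connectedness and non-Jordanness of the compositional Julia set forces infinitely many bounded Fatou components (for a single hyperbolic polynomial with connected non-Jordan Julia set, preimages of any non-trivial bounded periodic Fatou component multiply without bound), and one obtains density by periodic approximation in $\GN$.

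Suppose finally that $G$ satisfies statement~\ref{mainthran1-3} of Theorem~\ref{mainthran1}. The intersection property of the fiberwise Julia sets and the arcwise connectedness of $J(G)$ transfer, so the only new content is to upgrade ``for a dense set of $\gamma$, $J_{\gamma}(f)$ is not a Jordan curve'' to ``for every $\gamma\in\GN$, $F_{\gamma}(f)$ has infinitely many bounded components.'' I would argue as follows. Fix $\gamma\in\GN$. By Lemma~\ref{fibconnlem} each $J_{\gamma}(f)$ is connected. By hyperbolicity, $P^{*}(G)$ is at positive distance from $J(G)$ and lies in the interior of $\hat{K}(G)$; in particular, every element of $P^{*}(G)$ belongs to the bounded part of $F_{\gamma}(f)$. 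The failure of statement~\ref{mainthran1-2}, combined with the intersection property of statement~\ref{mainthran1-3}, forces a critical value of some $h\in\Gamma$ to land in a bounded Fatou component of some $F_{\delta}(f)$ that is \emph{not} the single outermost bounded component; pulling this critical value back through the locally uniformly expanding maps $f_{\gamma,n}$ yields infinitely many distinct bounded components of $F_{\gamma}(f)$.

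The main obstacle is this last step. Upgrading a statement that is only dense in $\GN$ in the semi-hyperbolic setting to a statement holding for \emph{every} $\gamma\in\GN$ in the hyperbolic setting is delicate, because the individual fiber $F_{\gamma}(f)$ need not have any obvious dependence on the fibers witnessing the dense non-Jordan behaviour. The key will be to combine the uniform expansion available under hyperbolicity with the nonemptiness of pairwise intersections of fiberwise Julia sets from statement~\ref{mainthran1-3}, so that a single ``critical obstruction'' detected in one $\gamma$ can be pulled back inside every other $\gamma$.
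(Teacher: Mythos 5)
Your overall strategy --- deducing Theorem~\ref{mainthran2} by refining each alternative of Theorem~\ref{mainthran1} separately --- is not the paper's route and, more importantly, it breaks down exactly at the step you flag as the main obstacle. The paper does not transport the trichotomy of Theorem~\ref{mainthran1}; it sets up a new case division according to whether int$(K_{\g }(f))$ is connected for every $\g $, for no $\g $, or for some but not all $\g $. In the mixed case it proves statement~\ref{mainthran2-2} directly (via Lemmas~\ref{060416lemast}, \ref{mainthranlem2} and \ref{mainthran2lem3}) without ever invoking the existence of two disjoint fiberwise Julia sets. Consequently it is not established --- and your proposal nowhere shows --- that alternative~\ref{mainthran1-3} of Theorem~\ref{mainthran1} forces alternative~\ref{mainthran2-3} of Theorem~\ref{mainthran2}: a priori one could have $J_{\alpha }(f)\cap J_{\beta }(f)\neq \emptyset $ for all pairs while some fiber $\alpha $ has int$(K_{\alpha }(f))$ connected, and such a fiber has exactly one bounded Fatou component, contradicting the conclusion you want for every $\g .$ Your sketch for this step (``a critical value lands in a non-outermost bounded component and pulling it back yields infinitely many components'') is not an argument; the engine that actually does this work in the paper is a Riemann--Hurwitz count (Lemmas~\ref{mainthran2lem1} and \ref{mainthran2lem2}): under hyperbolicity with $\pi _{\CCI }(P(f))\cap \CC $ bounded, if int$(K_{\omega }(f))$ had $2\leq r<\infty $ components one would get $r+\sum _{j}p_{j}=d$ with $\sum _{j}p_{j}=d-1$, forcing $r=1$; hence a disconnected interior automatically has infinitely many components, and the correct dichotomy is ``all int$(K_{\g }(f))$ disconnected'' (which yields statement~\ref{mainthran2-3}) versus ``some connected'' (which, combined with ``some disconnected'', yields statement~\ref{mainthran2-2}).

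The same missing lemma undermines your construction of the dense set ${\cal V}$ of quasicircle fibers. Hausdorff continuity of $\g \mapsto J_{\g }(f)$ does not preserve the property of being a Jordan curve, so you cannot ``extract, in every open subset of $\GN $, a periodic $\g $ whose Julia set is a Jordan curve'' merely from density of ${\cal U}$ plus continuity. What the paper uses instead is Lemma~\ref{mainthran2lem3}: if int$(K_{\alpha }(f))$ is connected, there is a neighborhood ${\cal U}_{0}$ of $\alpha $ such that every $\g $ whose $\sigma $-orbit enters ${\cal U}_{0}$ infinitely often has int$(K_{\g }(f))$ connected (again by the Riemann--Hurwitz count), hence $J_{\g }(f)$ is a Jordan curve by Proposition~\ref{shonecomp}; this is what legitimizes replacing $\alpha $ by a nearby periodic point and then taking ${\cal V}=\bigcup _{n}(\sigma ^{n})^{-1}(\{ \alpha \} ).$ Your reduction of ${\cal W}$ to ``periodic $\g $ in the dense non-Jordan set'' has a similar flaw: the dense set supplied by Theorem~\ref{mainthran1} consists of $\sigma $-preimages of a single, generally non-periodic $\rho $, so the single-polynomial fact you quote does not apply to its elements, whereas Lemma~\ref{mainthran2lem2} applies to arbitrary fibers. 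In short, every part of statements~\ref{mainthran2-2} and \ref{mainthran2-3} that goes beyond Theorem~\ref{mainthran1} rests on the hyperbolic Riemann--Hurwitz counting lemmas, which are absent from your proposal.
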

\begin{ex}
\label{jbnqexfirst}
Let $g_{1}(z):=z^{2}-1$ and $g_{2}(z):= \frac{z^{2}}{4}.$ 
Let $\G :=\{ g_{1}^{2}, g_{2}^{2}\} .$ 
Let $f:\GN\times \CCI \rightarrow \GN \times \CCI $ be the 
skew product associated with $\Gamma .$ 
Moreover, let $G$ be the polynomial semigroup generated by 
$\G .$ Let $D:= \{ z\in \CC \mid |z|<0.4\} .$ Then, 
it is easy to see $g_{1}^{2}(D)\cup g_{2}^{2}(D)\subset D.$ Hence, 
$D\subset F(G).$ 
Let $U$ be a small disk around $-1.$ 
Then $g_{1}^{2}(U)\subset U$ and $g_{2}^{2}(U)\subset D.$ Therefore $U\subset F(G).$ 
Moreover, by Remark~\ref{pcbrem}, we have that 
$P^{\ast }(G)=
\overline{\bigcup _{g\in G\cup \{ Id\} }g(\{ 0,-1\} )} 
\subset D\cup U\subset F(G).$ Hence, $G\in {\cal G}$ and $G$ is hyperbolic.  
Furthermore, let $K:=\{ z\in \CC \mid 0.4\leq |z|\leq 4\} .$ Then, 
it is easy to see that $(g_{1}^{2})^{-1}(K)\cup (g_{2}^{2})^{-1}(K)\subset K$ and 
$(g_{1}^{2})^{-1}(K)\cap (g_{2}^{2})^{-1}(K)=\emptyset .$ 
Combining this with Lemma~\ref{hmslem}-\ref{backmin} and Lemma~\ref{hmslem}-\ref{bss}, 
we obtain that $J(G)$ is disconnected. Therefore, 
$G\in {\cal G}_{dis}.$ 
Let $h_{i}:= g_{i}^{2}$ for each $i=1,2.$ 
Let $0<p_{1},p_{2}<1$ with $p_{1}+p_{2}=1.$ 
Let $\tau :=\sum _{i=1}^{2}p_{i}\delta _{h_{i}}.$  
Since $J(g_{1}^{2})$ is not a Jordan curve, 
from Theorem~\ref{mainthran2}, it follows that for almost every 
$\gamma \in \GN $ with respect to $\tilde{\tau }$, 
$J_{\g }(f)$ is a Jordan curve but not a quasicircle, and  
$A_{\g }(f)$ is a John domain but the bounded component 
of $F_{\g }(f)$ is not a John domain. 
(See figure~\ref{fig:dcgraph}: the Julia set of $G$.)  
In this example, for each connected component $J$ of $J(G)$, 
there exists a unique $\gamma \in \GN $ such that 
$J=J_{\gamma }(f).$ 
\begin{figure}[htbp]
\caption{The Julia set of $G=\langle g_{1}^{2},g_{2}^{2}\rangle $, 
where $g_{1}(z):= z^{2}-1, g_{2}(z):= \frac{z^{2}}{4}.$ 
For a.e.$\gamma $, $J_{\gamma }(f)$ is a Jordan curve but not a quasicircle, 
$A_{\gamma }(f)$ is a John domain, and the bounded component of 
$F_{\gamma }(f)$ is not a John domain. For each connected component 
$J$ of $J(G)$, there exists a unique $\gamma \in \GN $ such that 
$J=J_{\gamma }(f).$}
\ \ \ \ \ \ \ \ \ \ \ \ \ \ \ \ \ \ \ \ \ \ \ \ \ \ \ \ \ \ \ 
\ \ \ \ \ \ \ \ \ \ \ \ \ \ 
\includegraphics[width=4.5cm,width=4.5cm]{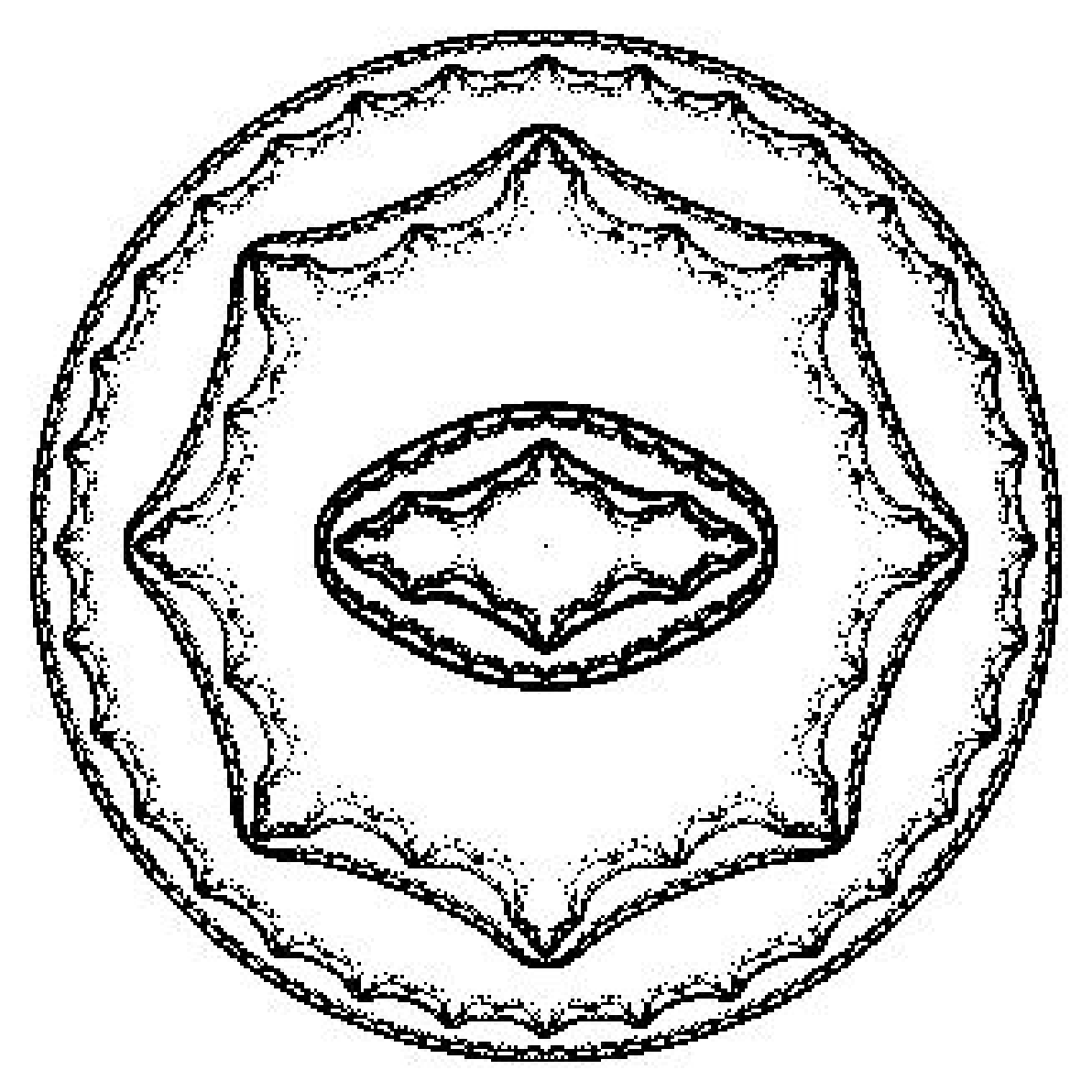}
\label{fig:dcgraph}
\end{figure}
\end{ex} 
\begin{ex}
\label{jbnqex}
Let $h_{1}(z):= z^{2}-1$ and $h_{2}(z):=az^{2}$, 
where $a\in \CC $ with $0<|a|<0.1.$ 
Let $\G := \{ h_{1},h_{2}\} .$ Moreover, 
let $G:=\langle h_{1},h_{2}\rangle .$ 
Let $U:= \{ |z|<0.2\} .$ Then, it is easy to see that 
$h_{2}(U)\subset U,\ h_{2}(h_{1}(U))\subset U,$ and 
$h_{1}^{2}(U)\subset U.$ Hence, $U\subset 
F(G).$ It follows that $P^{\ast }(G)\subset $ 
int$(\hat{K}(G))\subset F(G).$ Therefore, $G\in {\cal G}$ and 
$G$ is hyperbolic. Since $J(h_{1})$ is not a Jordan curve and 
$J(h_{2})$ is a Jordan curve, Theorem~\ref{mainthran2} implies that 
there exists a residual subset ${\cal U} $ of $ \GN $ such that,  
for each Borel probability measure $\tau $ 
on Poly$_{\deg \geq 2}$ with $\G _{\tau }=\G $, we have 
$\tilde{\tau }({\cal U})=1$, and such that, for each 
$\g \in {\cal U}$, 
$J_{\g }(f)$ is a Jordan curve but not a quasicircle. Moreover, 
for each $\g \in {\cal U}$, $A_{\g }(f)$ is a John domain, but 
the bounded component of $F_{\g }(f)$ is not a John domain. 
\end{ex} 
\begin{rem}
Let $h\in $ Poly$_{\deg \geq 2}$ be a polynomial. 
Suppose that $J(h)$ is a Jordan curve but not a quasicircle. 
Then, it is easy to see that there exists a parabolic fixed point 
of $h$ in $\CC $ and the bounded connected component of $F(h)$ is the 
immediate parabolic basin. Hence, $\langle h\rangle $ is not semi-hyperbolic.
Moreover, by \cite{CJY}, 
$F_{\infty }(h)$ is not a John domain. 

 Thus what we see in statement \ref{mainthran1-2} in Theorem~\ref{mainthran1} and statement \ref{mainthran2-2} in Theorem~\ref{mainthran2}, 
 as illustrated in 
Example ~\ref{jbnqexfirst} and 
Example~\ref{jbnqex} (see also Section \ref{Const}), is a 
 phenomenon which can hold in the {\em random} dynamics 
 of a family of polynomials, but cannot hold in the usual 
 iteration dynamics of a single polynomial. 
 Namely, it can hold that for almost every $\g \in \GN $, 
$J_{\g }(f)$ is a Jordan curve and fails to be a quasicircle 
all while the basin of infinity $A_{\g }(f)$ is still a John domain. 
Whereas, if $J(h)$, for some polynomial $h$, is a Jordan curve which 
fails to be a quasicircle, then the basin of infinity $F_{\infty }(h)$ 
is necessarily {\bf not} a John domain.   

 In Section~\ref{Const}, we will see how easily  
situation \ref{mainthran1-2} in Theorem~\ref{mainthran1} and situation \ref{mainthran2-2} in Theorem~\ref{mainthran2} 
occur.
 
 Pilgrim and Tan Lei (\cite{PT}) showed that there exists a  
hyperbolic rational map $h$ with {\em disconnected} Julia set such that  
``almost every'' connected component of $J(h)$ 
is a Jordan curve but not a quasicircle.  
\end{rem}
We give a sufficient condition so that statement~\ref{mainthran2-1} in Theorem~\ref{mainthran2} holds. 
\begin{prop}
\label{ranprop1}
Let $\G $ be a non-empty compact subset of {\em Poly}$_{\deg \geq 2}.$ 
Let $f:\GN \times \CCI \rightarrow \GN \times \CCI $ be the 
skew product associated with the family $\G .$ 
Let $G$ be the polynomial semigroup generated by $\G .$ 
Suppose that $P^{\ast }(G)$ is included in a connected component 
of {\em int}$(\hat{K}(G)).$ Then, there exists a constant $K\geq 1$ such that 
for each $\g \in \GN $, $J_{\g }(f)$ is a $K$-quasicircle.
\end{prop}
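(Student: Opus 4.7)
The strategy is to reduce to Theorem~\ref{mainthran2}: I show that, under the hypothesis, $G$ is hyperbolic and every fiber Julia set $J_{\gamma}(f)$ is a Jordan curve; by the trichotomy in Theorem~\ref{mainthran2} (situations~\ref{mainthran2-2} and~\ref{mainthran2-3} both feature a dense subset of $\GN$ for which $J_{\gamma}(f)$ is not Jordan), this forces situation~\ref{mainthran2-1}, and so gives the uniform $K$.

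\emph{Step 1 (Hyperbolicity and the invariance $g(U)\subset U$).} Let $U$ denote the connected component of $\text{int}(\hat{K}(G))$ containing $P^{\ast}(G)$. Since every $g\in G$ is a non-constant polynomial (hence an open map) and $\hat{K}(G)$ is forward-invariant under $g$, the set $g(U)$ is a connected open subset of $\text{int}(\hat{K}(G))$. By Remark~\ref{pcbrem}, $g(P^{\ast}(G))\subset P^{\ast}(G)\subset U$, and this is non-empty because $\deg g\geq 2$; hence $g(U)$ meets $U$, forcing $g(U)\subset U$. Consequently $P^{\ast}(G)\subset U\subset F(G)$, so $G$ is hyperbolic (in particular semi-hyperbolic by Remark~\ref{hypskewsemigrrem}), and Theorem~\ref{mainthran2} applies.

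\emph{Step 2 (Nested Jordan approximations to $K_{\gamma}$).} Fix $\gamma\in\GN$. For every $n$, $f_{\gamma,n}=\gamma_{n}\circ\cdots\circ\gamma_{1}\in G$ satisfies $f_{\gamma,n}(U)\subset U$, and by the chain rule every finite critical value of $f_{\gamma,n}$ lies in $P^{\ast}(G)\subset U$. Using compactness of $\G$ in Poly$_{\deg\geq 2}$, choose $R>0$ with $\hat{K}(G)\subset D:=\{|z|<R\}$ and $g^{-1}(D)\subset D$ for every $g\in\G$; then $E_{\gamma,n}:=f_{\gamma,n}^{-1}(D)$ is a decreasing sequence of open subsets of $D$ with $\bigcap_{n}E_{\gamma,n}=K_{\gamma}(f)$. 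Because all finite critical values of $f_{\gamma,n}$ lie in $D$, the monodromy of the branched cover $f_{\gamma,n}\colon E_{\gamma,n}\to D$ around $\infty$ is a single $\deg(f_{\gamma,n})$-cycle, so $E_{\gamma,n}$ is connected; Riemann--Hurwitz then yields $\chi(E_{\gamma,n})=\deg(f_{\gamma,n})-(\deg(f_{\gamma,n})-1)=1$, so $E_{\gamma,n}$ is a Jordan domain.

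\emph{Step 3 (Uniform quasicircle bound and conclusion).} Fix a Jordan neighborhood $W$ of $P^{\ast}(G)$ with $\overline{W}\subset U$. Then $f_{\gamma,n}\colon f_{\gamma,n}^{-1}(D\setminus\overline{W})\to D\setminus\overline{W}$ is an unbranched covering for every $\gamma,n$. Decomposing $f_{\gamma,n}$ through its factorization and pulling back the fixed annulus $D\setminus\overline{W}$ step by step, each one-step inverse branch of $\gamma_{i}\in\G$ is univalent with Koebe distortion bounded uniformly in $\gamma_{i}$ (by compactness of $\G$) and contraction factor bounded away from $1$ on a neighborhood of $J(G)$ (by hyperbolicity of $G$). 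Composing these, the annular collar $E_{\gamma,n}\setminus\overline{f_{\gamma,n}^{-1}(\overline{W})}$ is quasiconformally equivalent to $D\setminus\overline{W}$ with dilatation bounded by a constant $K_{0}$ depending only on $\G$. In particular $\partial E_{\gamma,n}$ is a $K_{0}$-quasicircle, and passing to the Hausdorff limit, $J_{\gamma}(f)=\lim_{n}\partial E_{\gamma,n}$ is a Jordan curve. This rules out situations~\ref{mainthran2-2} and~\ref{mainthran2-3} of Theorem~\ref{mainthran2}, so we are in situation~\ref{mainthran2-1}, and the uniform $K$ is as claimed.

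\emph{Main obstacle.} The principal difficulty is the $n$-uniform distortion estimate in Step~3: a one-shot Koebe estimate on $f_{\gamma,n}$ is useless because $\deg(f_{\gamma,n})$ grows without bound, so the iteration through the single-step factorization is essential. It succeeds because (i) $D\setminus\overline{W}$ contains no critical values of any $f_{\gamma,n}$, so every step pulls back as an unbranched cover; (ii) compactness of $\G$ yields step-wise uniform distortion bounds; and (iii) the hyperbolicity of $G$ yields step-wise uniform contraction, whose composition gives a uniform-in-$n$ quasiconformal dilatation for the collars.
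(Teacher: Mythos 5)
Your Steps 1 and 2 are sound (Step 2 is essentially the implication \ref{fcc1}$\Rightarrow$\ref{fcc2} of Lemma~\ref{fibconnlem} and uses only $G\in {\cal G}$, not the hypothesis that $P^{\ast }(G)$ lies in a single component of int$(\hat{K}(G))$), and your reduction to situation~\ref{mainthran2-1} of Theorem~\ref{mainthran2} would be legitimate if you could show that every $J_{\g }(f)$ is a Jordan curve. The gap is in Step 3, and it is not cosmetic. The collar $E_{\g ,n}\setminus \overline{f_{\g ,n}^{-1}(\overline{W})}=f_{\g ,n}^{-1}(D\setminus \overline{W})$ is a connected unbranched holomorphic covering of $D\setminus \overline{W}$ of degree $d_{n}:=\deg (f_{\g ,n})$, so its modulus equals $\mbox{mod}(D\setminus \overline{W})/d_{n}\rightarrow 0.$ A $K_{0}$-quasiconformal homeomorphism distorts the modulus of an annulus by at most the factor $K_{0}$, so these collars cannot be $K_{0}$-quasiconformally equivalent to the fixed annulus $D\setminus \overline{W}$ for any $K_{0}$ independent of $n$: the central quantitative claim of Step 3 is false. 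The step-by-step factorization does not rescue it, because each one-step preimage $\g _{i}^{-1}(D\setminus \overline{W})$ is again a covering annulus of degree $\deg (\g _{i})\geq 2$ rather than the image of a univalent branch, so the moduli still decay geometrically. Without a uniform bound you obtain neither a uniform quasicircle constant for $\partial E_{\g ,n}$ nor, since a Hausdorff limit of Jordan curves need not be a Jordan curve absent such uniform control, the conclusion that $J_{\g }(f)$ is a Jordan curve.

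The paper's route avoids this entirely. One first shows that int$(K_{\g }(f))$ is connected for every $\g $: if $W_{1},W_{2}$ were distinct components, then by \cite[Corollary 2.7]{S4} some $f_{\g ,n}$ would map both onto one component $W$ containing $P^{\ast }(G)$, hence containing all finite critical values of $f_{\g ,n}$, and the Riemann--Hurwitz count then forces $(f_{\g ,n})^{-1}(W)$ to be connected, a contradiction. One then invokes Theorem~\ref{hypskewqc}, whose fiberwise quasiconformal surgery---straightening $f$ to $z\mapsto c(x)z^{d(x)}$ with Beltrami coefficients of uniformly bounded norm---is exactly the device needed to convert degenerating collar moduli into a uniform quasicircle constant. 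Your Steps 1--2 already contain the ingredients for the connectivity claim (for a suitable forward-invariant Jordan neighborhood $W'$ of $P^{\ast }(G)$ inside $U$, int$(K_{\g }(f))$ is the increasing union $\bigcup _{n}f_{\g ,n}^{-1}(W')$ of connected, simply connected sets), so the proof is repaired by replacing Step 3 with an appeal to Theorem~\ref{hypskewqc}, or to Proposition~\ref{shonecomp} followed by your reduction via Theorem~\ref{mainthran2} if you only want the Jordan-curve property first.
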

\begin{ex}
Let $d_{1},\ldots ,d_{m}\in \NN $ with $d_{j}\geq 2$ for each $j$, and 
let $h_{j}(z)=a_{j}z^{d_{j}}+c_{j}, a_{j}\neq 0,$ for each $j=1,\ldots, m.$ 
Let $\G =\langle g_{1},\ldots ,g_{m}\rangle .$ 
If $|c_{j}|$ is small enough for each $j$, then 
$\G $ satisfies the assumption of Proposition~\ref{ranprop1}. Thus 
statement~\ref{mainthran2-1} in Theorem~\ref{mainthran2} holds. 
\end{ex}
We have also many examples of $\Gamma $ such that 
 statement~\ref{mainthran1-3} in Theorem~\ref{mainthran1} or 
statement~\ref{mainthran2-3} in Theorem~\ref{mainthran2} holds. 
\begin{ex}
Let $h_{1}\in \mbox{Poly}_{\deg \geq 2}.$ 
Suppose that $\langle h_{1}\rangle \in {\cal G}$ and 
$h_{1}$ is hyperbolic. Suppose also that 
$h_{1}$ has at least two attracting periodic points in $\CC .$ 
Let $\Gamma $ be a small compact neighborhood of $h_{1}$ in 
Poly$_{\deg \geq 2}.$ 
Then $\langle \G \rangle \in {\cal G}$  and $\langle \G \rangle $ is hyperbolic 
(see Lemma~\ref{l:omegahopen}). Moreover,  
 by the argument in the proof of Lemma~\ref{l:disjfinv}, we see that 
for each $\g \in \GN $, $F_{\g }(f)$ has at least two bounded connected components, 
where $f:\GN \times \CCI \rightarrow \GN \times \CCI $ is the skew product associated with 
$\Gamma $. Thus statement~\ref{mainthran2-3} in Theorem~\ref{mainthran2} holds.    
We remark that by using Lemma~\ref{l:twohypp}, \ref{l:disjfinv} and their proofs, 
 we easily obtain many examples of $\Gamma $ such that 
 statement~\ref{mainthran1-3} in Theorem~\ref{mainthran1} or 
statement~\ref{mainthran2-3} in Theorem~\ref{mainthran2} holds. 
\end{ex}
\section{Tools}
\label{Tools}
To show the main results, we need 
some tools in this section.
\subsection{Fundamental properties of rational semigroups}
{\bf Notation:} 
For a rational semigroup $G$, we set 
$E(G):=\{ z\in \CCI \mid \sharp (\bigcup _{g\in G}g^{-1}(\{ z\} ))<\infty \} .$ 
This is called the exceptional set of $G.$\\ 
{\bf Notation:} Let $r>0.$ 
For a subset $A$ of $\CCI $, we set 
$B(A,r):= \{ z\in \CCI \mid d_{s}(z,A)<r\} $, where $d_{s}$ is the spherical distance. 
For a subset $A$ of $\CC $, we set 
$D(A,r):= \{ z\in \CC \mid d_{e}(z,A)<r \}$, where $d_{e}$ is the Euclidean distance. 

We use the following Lemma~\ref{hmslem} and Theorem~\ref{repdense} in the proofs of the main results.  
\begin{lem}[\cite{HM1,GR,S3,S1}]
\label{hmslem}
Let $G$ be a rational semigroup.\
\begin{enumerate}
\item 
\label{invariant}
For each $h\in G,\ $ we have 
$h(F(G))\subset F(G)$ and $h^{-1}(J(G))\subset J(G).$ Note that we do not 
have that the equality holds in general.
\item
\label{bss}
If $G=\langle h_{1},\ldots ,h_{m}\rangle $, then 
$J(G)=h_{1}^{-1}(J(G))\cup \cdots \cup h_{m}^{-1}(J(G)).$ 
More generally, if $G$ is generated by a compact subset 
$\G $ of {\em Rat}, then 
$J(G)=\bigcup _{h\in \G}h^{-1}(J(G)).$ 
(We call this property of the Julia set of a compactly generated rational semigroup ``backward self-similarity." )
\item
\label{Jperfect}
If \( \sharp (J(G)) \geq 3\) ,\ then \( J(G) \) is a 
perfect set.\ 
\item
\label{egset}
If $\sharp (J(G))\geq 3$ ,\ then 
$ \sharp (E(G)) \leq 2. $
\item
\label{o-set}
If a point \( z\) is not in \( E(G),\ \) then 
 \( J(G)\subset \overline{\bigcup _{g\in G}g^{-1}(\{ z\} )} .\) In particular 
if a point\ \( z\)  belongs to \ \( J(G)\setminus E(G), \) \ then
$ \overline{\bigcup _{g\in G}g^{-1}(\{ z\})}=J(G). $
\item
\label{backmin}
If \( \sharp (J(G))\geq 3 \) ,\ 
then
\( J(G) \) is the smallest closed backward invariant set containing at least
three points. Here we say that a set $A$ is backward invariant under $G$ if
for each \( g\in G,\ g^{-1}(A)\subset A.\ \)
\end{enumerate}
\end{lem}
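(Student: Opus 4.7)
The plan is to prove the six items of Lemma~\ref{hmslem} roughly in the order given, since each later item uses the earlier ones, and to adapt classical Fatou--Julia arguments to the semigroup setting. For (1), I would first establish the equivalent statement $h^{-1}(J(G))\subset J(G)$: if $z\in F(G)$ and $h\in G$, take a connected neighborhood $U$ of $z$ on which $G$ is normal; since $h$ is a non-constant holomorphic map, it is open and $h(U)$ contains a neighborhood $V$ of $h(z)$. For any sequence $\{g_n\}\subset G$, the sequence $\{g_n\circ h\}$ also lies in $G$, hence is normal on $U$; pulling normality back through the open map $h|_U$ yields normality of $\{g_n\}$ on $V$, so $h(z)\in F(G)$. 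The containment $h(F(G))\subset F(G)$ is the same statement read forwards.

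For (2), the inclusion $\bigcup_{h\in\Gamma}h^{-1}(J(G))\subset J(G)$ is immediate from (1). For the reverse, suppose $z\notin\bigcup_{h\in\Gamma}h^{-1}(J(G))$, so that $h(z)\in F(G)$ for every $h\in\Gamma$. Using joint continuity of the evaluation map $\Gamma\times\CCI\to\CCI$ together with compactness of $\Gamma$, I would produce a single neighborhood $U$ of $z$ and a single open set $W\subset F(G)$ such that $h(U)\subset W$ for all $h\in\Gamma$, with $G$ normal on $W$. Every $g\in G$ factors as $g=g'\circ h$ with $h\in\Gamma$ and $g'\in G\cup\{\mathrm{Id}\}$, so on $U$ we may first apply $h$ (uniformly continuous in $h$ by compactness) and then $g'$ (taken from a normal family on $W$); composing gives normality of $G$ on $U$, whence $z\in F(G)$.

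For (3)--(6) the arguments are the classical ones, lightly adjusted. In (3), if $J(G)$ had an isolated point $z_0$, pick two other points of $J(G)$ and apply Montel's theorem on a punctured disk around $z_0$ to force $z_0\in F(G)$, contradicting the definition of $J(G)$. For (4), a point in $E(G)$ has finite total backward orbit under $G$; Riemann--Hurwitz forces each generator to fix the exceptional set up to finite ramification, which limits $\sharp(E(G))$ to at most $2$ as soon as $\sharp(J(G))\geq 3$. For (5), if $w\in J(G)$ were not in $\overline{\bigcup_{g\in G}g^{-1}(\{z\})}$, some neighborhood $N$ of $w$ would miss every preimage of $z$, so every $g\in G$ restricted to $N$ would omit $z$ and two of its distinct preimages (available since $z\notin E(G)$), giving normality on $N$ by Montel --- contradiction. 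Finally (6) follows by combining (4) and (5): any closed backward-invariant $A$ with $\sharp A\geq 3$ meets $\CCI\setminus E(G)$, and applying (5) to such a point of $A$ gives $J(G)\subset\overline{\bigcup_{g\in G}g^{-1}(\{z\})}\subset A$.

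The main obstacle I expect is step (2). Normality is a neighborhood property but $\Gamma$ may be uncountable, and one cannot simply union a pointwise Fatou statement over $h\in\Gamma$ to get a neighborhood of $z$ on which $G$ is normal. The technical content of the lemma is exactly that compactness of $\Gamma$ upgrades pointwise normality through all generators into uniform normality on a common neighborhood, after which the whole semigroup inherits it via the "factor $g=g'\circ h$" trick. Once (2) is secured, (3)--(6) are comparatively routine.
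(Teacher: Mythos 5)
The paper does not actually prove Lemma~\ref{hmslem}: it is imported verbatim from \cite{HM1,GR,S3,S1}, so there is no internal proof to compare against, and your outline follows the standard route of those references. Items \ref{invariant}, \ref{bss}, \ref{o-set} and \ref{backmin} are essentially complete as you describe them. In particular you correctly identify the real content of \ref{bss}: compactness of $\G$ and continuity of $(h,y)\mapsto h(y)$ produce one neighborhood $U$ of $z$ with $\bigcup_{h\in \G}h(\overline{U})$ a compact subset of $F(G)$, after which the factorization $g=g'\circ h$ ($h\in \G$, $g'\in G\cup\{\mathrm{Id}\}$) and a diagonal subsequence using $h_n\to h_\infty$ in $\G$ give normality of $G$ on $U$. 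Your argument for \ref{o-set} (each $g\in G$ omits on $N$ the point $z$ and two of its $G$-preimages, because $g(y)=z_i\in h^{-1}(\{z\})$ would force $y\in (h\circ g)^{-1}(\{z\})\cap N$) is exactly right, as is the deduction of \ref{backmin} from \ref{egset} and \ref{o-set}.

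Two steps are genuinely under-justified. In \ref{Jperfect}, ``apply Montel's theorem on a punctured disk around $z_0$'' does not work as stated: normality of $G$ on $D^{*}=D\setminus\{z_0\}$ is automatic (since $D^{*}\subset F(G)$) and does not imply normality at $z_0$; the family $f_n(z)=nz$ is normal on a punctured disk about $0$ but not at $0$. What saves the argument is that each $g\in G$ maps $D^{*}$ into $F(G)$, hence omits every point of $J(G)$ on $D^{*}$ and omits all of $J(G)$ except possibly the single value $g(z_0)$ on the full disk $D$; one then needs either the Montel--Carath\'eodory theorem with $g$-dependent but uniformly separated omitted triples chosen from $J(G)$ (which requires first observing that $J(G)\supset J(g)$ is infinite when some $g\in G$ has degree at least two), or the refinement of Montel's theorem allowing the three values to be omitted only off an isolated point. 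In \ref{egset}, the Riemann--Hurwitz count bounds the cardinality of a single finite backward-invariant set, not of $E(G)$ directly. The missing step is: if $z_1,z_2,z_3\in E(G)$ were distinct, then $A:=\bigcup_{i=1}^{3}\bigl(\{z_i\}\cup\bigcup_{g\in G}g^{-1}(\{z_i\})\bigr)$ is a finite set with $\sharp A\geq 3$ and $g^{-1}(A)\subset A$ for every $g\in G$; applying Riemann--Hurwitz to one element of degree at least two yields $\sharp A\leq 2$, a contradiction (equivalently, Montel applied on $\CCI \setminus A$ forces $J(G)\subset A$ to be finite, contradicting \ref{Jperfect}). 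With these two repairs your proposal matches the arguments in the cited sources.
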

\begin{thm}[\cite{HM1,GR,S3}]
\label{repdense}
Let $G$ be a rational semigroup.
If $\sharp (J(G))\geq 3$, then \\ 
$ J(G)=\overline{\{ z\in \CC \mid 
\exists g \in G,\ g(z)=z,\ |g'(z)|>1 \} } $, where 
the closure is taken in $\CCI .$  
In particular,\ $J(G)=\overline{\bigcup _{g\in G}J(g)}.$
\end{thm}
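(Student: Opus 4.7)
The inclusion $\overline{\bigcup_{g\in G} J(g)}\subseteq J(G)$ and the containment of all repelling fixed points in $J(G)$ are both easy: for each $g\in G$, the subfamily $\{g^n:n\in\NN\}\subset G$ is normal wherever $G$ is, so $F(G)\subseteq F(g)$ and thus $J(g)\subseteq J(G)$; and any repelling fixed point of $g$ belongs to $J(g)$ by classical one-variable theory. Once we show the density of repelling fixed points in $J(G)$, both equalities in the theorem will follow at once, because repelling fixed points of any $g\in G$ lie in $J(g)\subseteq \bigcup_{h\in G}J(h)\subseteq J(G)$.

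To prove density, fix $z_0\in J(G)$ and an open neighborhood $U$ of $z_0$ in $\CCI$; I will produce some $h\in G$ with a repelling fixed point in $U$. By Lemma~\ref{hmslem}-\ref{Jperfect}, \ref{hmslem}-\ref{egset}, the set $J(G)$ is perfect while $E(G)$ is finite, so $U\cap (J(G)\setminus E(G))$ is infinite. I choose five distinct points $\zeta_1,\ldots,\zeta_5$ there together with pairwise disjoint closed Jordan neighborhoods $\overline{D_i}\subset U\cap\CC$ of $\zeta_i$ (working in $\CC$ guarantees that the fixed point produced below lies in $\CC$).

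The key step is an iterated application of the Ahlfors five-island theorem. Since each $\zeta_i\in J(G)$, the family $\{g|_{D_i}:g\in G\}$ fails to be normal on $D_i$. Applied on $D_1$ with islands $D_1,\ldots,D_5$, the five-island theorem produces $g_1\in G$, an index $j_1\in\{1,\ldots,5\}$ and a Jordan subdomain $W_1\subset D_1$ with $\overline{W_1}\subset D_1$ such that $g_1:W_1\to D_{j_1}$ is a conformal homeomorphism. Iterating on $D_{j_1}$ gives $g_2\in G$, $j_2$, $W_2\subset D_{j_1}$ with $\overline{W_2}\subset D_{j_1}$ and $g_2:W_2\to D_{j_2}$ conformal, and so on. By the pigeonhole principle applied to $j_1,j_2,\ldots$, within six steps we obtain indices $\ell<k$ with $j_\ell=j_k$. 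Set $h:=g_k\circ g_{k-1}\circ\cdots\circ g_{\ell+1}\in G$. Pulling back $D_{j_k}$ through the chain of conformal homeomorphisms yields a Jordan subdomain $V$ with $\overline{V}\subset D_{j_\ell}=D_{j_k}$ on which $h:V\to D_{j_k}$ is a conformal homeomorphism.

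Finally, $h^{-1}:D_{j_k}\to V$ is a holomorphic self-map of the Jordan domain $D_{j_k}$ whose image has compact closure inside $D_{j_k}$; hence it is a strict hyperbolic contraction, and by the Schwarz/Denjoy--Wolff lemma it possesses a unique attracting fixed point $p\in V$. Equivalently, $p$ is a repelling fixed point of $h\in G$, and $p\in V\subset D_{j_k}\subset U\cap\CC$. Since $U$ was an arbitrary neighborhood of an arbitrary $z_0\in J(G)$, density is established. The main obstacle is the bookkeeping in the iterated Ahlfors construction that arranges for a single element $h\in G$ to map a Jordan subdomain $V$ with $\overline{V}\subset D$ conformally onto $D$; once such an $h$ is in hand, Schwarz--Pick instantly produces the desired repelling fixed point.
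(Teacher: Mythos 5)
Your proof is correct and follows the same route the paper points to for Theorem~\ref{repdense}: the paper itself only cites \cite{HM1,GR,S3} and notes in the introduction that the density of repelling fixed points is obtained by an application of the Ahlfors five island theorem, which is exactly your iterated-islands-plus-pigeonhole argument culminating in a Schwarz--Pick contraction. The easy inclusions $J(g)\subset J(G)$ and the deduction of $J(G)=\overline{\bigcup _{g\in G}J(g)}$ are handled as in the cited sources, so nothing is missing.
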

\begin{rem}
If a rational semigroup $G$ contains an element $g$ with $\deg (g)\geq 2$, 
then $\sharp (J(g))\geq 3$, which implies that $\sharp (J(G))\geq 3.$  
\end{rem}
\subsection{Fundamental properties of fibered rational maps}
\begin{lem}
\label{fibfundlem}
Let $f:X\times \CCI \rightarrow 
X\times \CCI $ be a rational skew product 
over $g:X\rightarrow X.$ Then, 
we have the following.
\begin{enumerate}
\item \label{fibfundlem1}
{\bf (\cite[Lemma 2.4]{S1})}
For each $x\in X$, 
$(f_{x,1})^{-1}(J_{g (x)}(f))
=J_{x}(f).$ Furthermore, we have 
$\hat{J}_{x}(f)\supset 
J_{x}(f).$ Note that 
{\bf equality $\hat{J_{x}}(f)=J_{x}(f)$ does not 
hold in general.}

 If $g:X\rightarrow X$ is 
a surjective and open map, then
$f^{-1}(\tilde{J}(f))=\tilde{J}(f)=f(\tilde{J}(f))$, and 
 for each $x\in X$, 
$(f_{x,1})^{-1}(\hat{J}_{g (x)}(f))=
\hat{J}_{x}(f).$
\item \label{fibfundlem2}
({\bf \cite{J, S1}})
If $d(x)\geq 2$ 
for each $x\in X $,  
then 
for each $x\in X$, $J_{x}(f)$ is a non-empty perfect set with $\sharp (J_{x}(f))\geq 3.$
Furthermore, the map $x\mapsto J_{x}(f)$ is 
lower semicontinuous; i.e.,  
for any point $(x,y)\in X\times \CCI $ 
with $y\in J_{x}(f)$ and 
any sequence $\{ x^{n}\} _{n\in \NN }$ in $X$ 
with $x^{n}\rightarrow x,\ $  
there exists a sequence $\{ y^{n}\} _{n\in \NN }$ 
in $\CCI $ with 
$y^{n}\in J_{x^{n}}(f)$ for each $n\in \NN $ such that 
$y^{n}\rightarrow y.$ However,  
$x\mapsto J_{x}(f)$ is {\bf not}
 continuous with respect to the Hausdorff topology in 
general.
\item \label{fibfundlemast}
If $d(x)\geq 2$ for each $x\in X$, 
then $\inf _{x\in X}$diam$_{S}J_{x}(f)>0$, 
where diam$_{S}$ denotes the diameter with respect to the 
spherical distance.
\item \label{fibfundlem4}
If $f:X\times \CCI \rightarrow 
X\times \CCI $ is a polynomial skew product 
and $d(x)\geq 2$     
for each $x\in X $,  
then  there exists a ball $B$ 
around $\infty $ such that for each $x\in 
X$, $B\subset A_{x}(f)\subset F_{x}(f)$, and  
for each $x\in X$, 
$J_{x}(f)=\partial K_{x}(f)=\partial A_{x}(f) .$
Moreover, for each $x\in X$, 
$A_{x}(f)$ is connected.  
\item \label{fibfundlema}
If $f:X\times \CCI \rightarrow 
X\times \CCI $ is a polynomial skew product 
and $d(x)\geq 2$     
for each $x\in X $, and if 
$\omega \in X$ is a point such that 
{\em int}$(K_{w}(f))$ is a non-empty 
set, then 
$\overline{\mbox{{\em int}}(K_{\omega }(f))}=K_{\omega }(f)$ 
and $\partial (\mbox{{\em int}}(K_{\omega }(f)))=J_{\omega }(f).$ 
\end{enumerate}
\end{lem}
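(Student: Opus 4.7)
The lemma assembles standard facts about fibered rational maps which the paper attributes to \cite{S1, J}. My plan is to take each statement in turn, adapting classical complex-dynamics techniques to the non-autonomous setting. For part \ref{fibfundlem1}, the identity $(f_{x,1})^{-1}(J_{g(x)}(f))=J_x(f)$ follows directly from the characterization of $J_x(f)$ via failure of normality together with the factorization $f_{x,n}=f_{g(x),n-1}\circ f_{x,1}$: since $f_{x,1}$ is open and non-constant, normality of $\{f_{x,n}\}_n$ at $y$ is equivalent to normality of $\{f_{g(x),m}\}_m$ at $f_{x,1}(y)$. The inclusion $\hat{J}_x(f)\supset J_x(f)$ is immediate from $J^x(f)\subset\tilde{J}(f)$. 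When $g$ is surjective and open, the invariance $f^{-1}(\tilde{J}(f))=\tilde{J}(f)=f(\tilde{J}(f))$ follows from passing fiberwise invariance to the closure in the product topology, where surjectivity of $g$ handles the preimage direction and openness handles limits. For part \ref{fibfundlem2}, since $\deg(f_{x,n})\geq 2^n\to\infty$, a Montel argument gives $J_x(f)\neq\emptyset$ with $\sharp J_x(f)\geq 3$ and perfect via the standard preimage-of-a-point trick. Lower semicontinuity is the potential-theoretic input flagged in the introduction: upper semicontinuity of the Green's function of the fiberwise basin of infinity in $(x,y)$ translates into lower semicontinuity of its support $J_x(f)$.

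For part \ref{fibfundlemast}, I argue by contradiction: if $\mathrm{diam}_S J_{x^n}(f)\to 0$ along some $x^n\to x_0\in X$, pick three distinct points $w_1,w_2,w_3\in J_{x_0}(f)$ using part \ref{fibfundlem2} and apply lower semicontinuity to approximate each $w_i$ by $w_i^n\in J_{x^n}(f)$; then $\mathrm{diam}_S J_{x^n}(f)\geq\min_{i\neq j}d_S(w_i^n,w_j^n)$ is bounded below, a contradiction. Compactness of $X$ upgrades this to a uniform lower bound. For part \ref{fibfundlem4}, compactness of $X$, continuity of $x\mapsto f_{x,1}$, and $d(x)\geq 2$ together yield a uniform escape radius $R$ so that $B=\{|z|>R\}$ satisfies $f_{x,1}(B)\subset B$ for every $x$; hence $B\subset A_x(f)\subset F_x(f)$ uniformly, $K_x(f)^c=A_x(f)$, and $J_x(f)=\partial K_x(f)=\partial A_x(f)$. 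For connectedness of $A_x(f)$, I write $A_x(f)=\bigcup_n (f_{x,n})^{-1}(B)$ as a nested increasing union and observe each $(f_{x,n})^{-1}(B)$ is connected, since $f_{x,n}$ is a polynomial of degree $D_n$ totally ramified at $\infty\in B$: the component of the preimage containing $\infty$ already carries degree $D_n$, ruling out other components.

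For part \ref{fibfundlema}, the inclusion $\partial(\mathrm{int}\,K_\omega(f))\subset\partial K_\omega(f)=J_\omega(f)$ is immediate. For the reverse inclusion and the density $\overline{\mathrm{int}\,K_\omega(f)}=K_\omega(f)$, given $p\in J_\omega(f)$ I pull back a point of the non-empty $\mathrm{int}\,K_\omega(f)$ under the iterates $(f_{\omega,n})^{-1}$; by a fiberwise analog of Lemma~\ref{hmslem}-\ref{o-set} (valid since $\sharp J_\omega(f)\geq 3$ from part \ref{fibfundlem2}), these preimages accumulate at every point of $J_\omega(f)$, giving $p\in\overline{\mathrm{int}\,K_\omega(f)}$ and hence $p\in\partial(\mathrm{int}\,K_\omega(f))$. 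I expect the hard step to be the lower semicontinuity in part \ref{fibfundlem2}: the map $x\mapsto J_x(f)$ is genuinely discontinuous in general, as the Siegel-disk example exhibits, so the proof is not purely formal and requires the non-trivial potential-theoretic input of upper semicontinuity of the Green's function of the non-autonomous polynomial dynamics in the base variable $x$.
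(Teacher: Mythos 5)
Your proposal is correct and follows essentially the same route as the paper, which likewise defers parts 1 and 2 to \cite{S1} and \cite{J}, treats parts 3 and 4 as easy consequences of part 2 and of a uniform escape radius, and proves part 5 by the blowing-up (``self-similarity'') property $f_{\omega ,n}(V\cap J_{\omega }(f))=J_{g^{n}(\omega )}(f)$. Two small points to tidy: in part 5 the points you pull back under $(f_{\omega ,n})^{-1}$ must be taken in $\mbox{int}(K_{g^{n}(\omega )}(f))$ (nonempty because $f_{\omega ,n}$ is an open map), not in $\mbox{int}(K_{\omega }(f))$ itself, since $(f_{\omega ,n})^{-1}$ acts on the fiber over $g^{n}(\omega )$; and in part 2 the Green's-function formulation only covers polynomial skew products, whereas the statement is for rational skew products, for which the lower semicontinuity comes from the continuity of the fiberwise measures of maximal entropy of \cite{J}, whose supports are the sets $J_{x}(f)$.
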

\begin{proof}
For the proof of statement \ref{fibfundlem1}, 
see \cite[Lemma 2.4]{S1}.
For the proof of statement \ref{fibfundlem2}, 
see \cite{J} and \cite{S1}. 

 By statement \ref{fibfundlem2}, 
it is easy to see that statement \ref{fibfundlemast} holds. 
Moreover, 
it is easy to see that 
statement \ref{fibfundlem4} holds. 

 To show statement \ref{fibfundlema}, 
let $y\in J_{\omega }(f)$ be a point. 
Let $V$ be an arbitrary 
neighborhood of $y$ in $\CCI .$ 
Then, by the self-similarity of Julia sets (see \cite{Bu1}), 
there exists an $n\in \NN $ such that 
$f_{\omega ,n}(V\cap J_{\omega }(f))=J_{g^{n}(\omega )}(f).$ 
Since $\partial (\mbox{int}(K_{g^{n}(\omega )}(f)))\subset 
J_{g^{n}(\omega )}(f)$ and $(f_{\omega ,n})^{-1}
(K_{g^{n}(\omega )}(f))=K_{\omega }(f)$, 
it follows that 
$V\cap \partial (\mbox{int}(K_{\omega }(f)))\neq \emptyset .$ 
Hence, we obtain $J_{\omega }(f)=\partial 
(\mbox{int}(K_{\omega }(f)).$ Therefore, we have proved 
statement \ref{fibfundlema}.  
\end{proof}
\begin{lem}
\label{fiblem}
Let $f:\GN \times 
\CCI \rightarrow \GN \times 
\CCI $ be a skew product  
associated with a compact subset 
$\G $ of {\em Rat}. 
Let $G$ be the rational semigroup generated by $\G .$ 
Suppose that $\sharp (J(G))\geq 3.$ 
Then, we have the following. 
\begin{enumerate}
\item  \label{pic}
$\pi _{\CCI }(\tilde{J}(f))=J(G).$
\item \label{fibfundlem5}
For each $\g =(\g _{1},\g _{2},\ldots ,)\in \GN $, 
$\hat{J}_{\g }(f)=\bigcap _{j=1}^{\infty }\g_{1}^{-1}
\cdots \g_{j}^{-1}(J(G)).$
\end{enumerate}
\end{lem}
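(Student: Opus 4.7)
My plan is to prove the two statements in sequence, using Part~\ref{pic} to feed into Part~\ref{fibfundlem5}.

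For Part~\ref{pic}, the inclusion $\pi_{\CCI}(\tilde{J}(f)) \subset J(G)$ is immediate: if $(\gamma, y) \in J^{\gamma}(f)$ then the family $\{f_{\gamma, n}\}_n$, whose elements all lie in $G$, fails to be normal at $y$, so $G$ itself is not normal at $y$ and $y \in J(G)$. Since $\GN \times \CCI$ is compact and $\tilde{J}(f)$ is closed, $\pi_{\CCI}(\tilde{J}(f))$ is closed, which handles the closure. For the reverse inclusion $J(G) \subset \pi_{\CCI}(\tilde{J}(f))$, I would invoke Theorem~\ref{repdense}: repelling fixed points of elements of $G$ are dense in $J(G)$. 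Given any $g = \gamma_n \circ \cdots \circ \gamma_1 \in G$ with repelling fixed point $z$, the periodic sequence $\gamma = (\gamma_1, \ldots, \gamma_n, \gamma_1, \ldots, \gamma_n, \ldots) \in \GN$ satisfies $f_{\gamma, kn} = g^k$, so $\{f_{\gamma, n}\}$ is not normal at $z$, hence $z \in J_{\gamma}(f) \subset \pi_{\CCI}(\tilde{J}(f))$. Closedness then gives $J(G) \subset \pi_{\CCI}(\tilde{J}(f))$.

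For Part~\ref{fibfundlem5}, the inclusion $\hat{J}_{\gamma}(f) \subset \bigcap_j \gamma_1^{-1}\cdots \gamma_j^{-1}(J(G))$ follows by iterating Lemma~\ref{fibfundlem}-\ref{fibfundlem1} (the shift $\sigma$ on $\GN$ is surjective and open, so the hypothesis applies): $\hat{J}_{\gamma}(f) = \gamma_1^{-1}(\hat{J}_{\sigma\gamma}(f)) = \cdots = \gamma_1^{-1}\cdots\gamma_j^{-1}(\hat{J}_{\sigma^j \gamma}(f))$, and combining with Part~\ref{pic} gives $\hat{J}_{\sigma^j\gamma}(f) \subset J(G)$.

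The reverse inclusion is the interesting direction and is where I expect the main work. Take $y \in \bigcap_{j \geq 1} \gamma_1^{-1} \cdots \gamma_j^{-1}(J(G))$, so $f_{\gamma, j}(y) \in J(G)$ for every $j$. By Part~\ref{pic}, for each $j$ there exists $\alpha^{(j)} \in \GN$ with $(\alpha^{(j)}, f_{\gamma, j}(y)) \in \tilde{J}(f)$. Define $\beta^{(j)} := (\gamma_1, \ldots, \gamma_j, \alpha^{(j)}_1, \alpha^{(j)}_2, \ldots) \in \GN$. Then by construction $f^j(\beta^{(j)}, y) = (\alpha^{(j)}, f_{\gamma, j}(y)) \in \tilde{J}(f)$, and the total backward invariance $f^{-1}(\tilde{J}(f)) = \tilde{J}(f)$ from Lemma~\ref{fibfundlem}-\ref{fibfundlem1} (iterated $j$ times) yields $(\beta^{(j)}, y) \in \tilde{J}(f)$. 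Since $\beta^{(j)}$ agrees with $\gamma$ on the first $j$ coordinates, $\beta^{(j)} \to \gamma$ in $\GN$, and by the closedness of $\tilde{J}(f)$ we obtain $(\gamma, y) \in \tilde{J}(f)$, so $y \in \hat{J}_{\gamma}(f)$. The main conceptual obstacle is recognizing that one must actually leave the fiber $\{\gamma\} \times \CCI$ and perturb the base point $\gamma$ through $\beta^{(j)}$ in order to witness $(\gamma, y)$ as a limit point of the fibered Julia sets; the density of repelling cycles from Theorem~\ref{repdense} and the surjective-openness of $\sigma$ are the two inputs that make this perturbation possible.
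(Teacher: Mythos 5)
Your proof is correct and follows essentially the same route as the paper: part (1) via Theorem~\ref{repdense} and periodic sequences, the forward inclusion of part (2) by iterating Lemma~\ref{fibfundlem}-\ref{fibfundlem1}, and the reverse inclusion by perturbing the base sequence on a cylinder set and using the total invariance $f^{-1}(\tilde{J}(f))=\tilde{J}(f)$ together with part (1). The only cosmetic difference is that you argue the reverse inclusion directly, exhibiting points $(\beta^{(j)},y)\in\tilde{J}(f)$ converging to $(\g ,y)$, whereas the paper runs the contrapositive by pushing forward a neighborhood $U\times V\subset\tilde{F}(f)$ to cover $\GN\times\{f_{\g ,n}(y)\}$; the ingredients are identical.
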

\begin{proof}
First, we show statement \ref{pic}. 
Since $J_{\g }(f)\subset J(G)$ for each $\g \in \G $, 
we have $\pi _{\CCI }(\tilde{J}(f))\subset J(G).$ 
By Theorem~\ref{repdense}, we have 
$J(G)=\overline{\bigcup _{g\in G}J(g)}.$ 
Since $\bigcup _{g\in G}J(g)\subset \pi _{\CCI }(\tilde{J}(f))$, 
we obtain $J(G)\subset \pi _{\CCI }(\tilde{J}(f)).$ 
Therefore, we obtain $\pi _{\CCI }(\tilde{J}(f))=J(G).$ 

 We now show statement \ref{fibfundlem5}. 
Let $\g =(\g _{1},\g _{2},\ldots )\in \GN .$ 
By statement \ref{fibfundlem1} in Lemma~\ref{fibfundlem}, 
we see that for each $j\in \NN $, 
$\g_{j}\cdots \g_{1}(
\hat{J}_{\g }(f))=
\hat{J}_{\sigma ^{j}(\g )}(f)
\subset J(G).$ 
Hence, 
$\hat{J}_{\g }(f)\subset 
\bigcap _{j=1}^{\infty }
\g _{1}^{-1}\cdots 
\g _{j}^{-1}(J(G)).$
 Suppose that there exists a point 
 $(\g ,y)\in \GN \times \CCI $ such that  
 $y\in $ $\left( \bigcap _{j=1}^{\infty }
\g _{1}^{-1}\cdots 
\g _{j}^{-1}(J(G))\right) \setminus 
\hat{J}_{\g }(f).$ 
Then, we have 
$(\g ,y)\in (\GN \times \CCI ) 
\setminus \tilde{J}(f).$ 
Hence, there exists a 
neighborhood $U$
of $\g $ in $\GN $ and a 
neighborhood $V$ of $y$ in $\CCI $ such that 
$U\times V\subset \tilde{F}(f).$ 
Then, there exists an $n\in \NN $ such that 
$\{ \rho \in \GN \mid \rho _{j}=\gamma _{j}, j=1,\ldots ,n\} \subset U.$  
Combining it with Lemma~\ref{fibfundlem}-\ref{fibfundlem1}, 
we obtain $\tilde{F}(f)\supset 
f^{n}(U\times V)\supset \GN \times 
\{ f_{\g ,n}(y)\} .$ 
Moreover, 
since we have 
$f_{\g ,n}(y)\in J(G)=
\pi _{\CCI }(\tilde{J}(f))$, 
where the last equality holds 
by statement \ref{pic}, 
we get that there exists an element 
$\g '\in \GN $ such that 
$(\g ', f_{\g ,n}(y))\in 
\tilde{J}(f).$ However, it contradicts 
$(\g ',f_{\g ,n}(y))\in 
\GN \times \{ f_{\g ,n}(y)\} 
\subset \tilde{F}(f).$ 
Hence, we obtain 
$\hat{J}_{\g }(f)= 
\bigcap _{j=1}^{\infty }
\g_{1}^{-1}\cdots 
\g_{j}^{-1}(J(G)).$
\end{proof}

\begin{lem}
\label{fibconnlem}
Let $f:X\times \CCI \rightarrow 
X\times \CCI $ be a polynomial skew product 
over $g:X\rightarrow X$ such that  
for each $x\in X$, 
$d(x)\geq 2.$ Then, 
the following are equivalent.
\begin{enumerate}
\item \label{fcc1}
$\pi _{\CCI }(P(f))\setminus \{ \infty \} $ 
is bounded in $\CC .$ 
\item \label{fcc2}
For each $x\in X$, $J_{x}(f)$ is 
connected.
\item \label{fcc3}
For each $x\in X$, $\hat{J}_{x}(f)$ is 
connected.
\end{enumerate} 
\end{lem}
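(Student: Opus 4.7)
The plan is to establish $(\ref{fcc1})\Leftrightarrow(\ref{fcc2})$ via a Riemann--Hurwitz / escape-radius argument, and then $(\ref{fcc2})\Leftrightarrow(\ref{fcc3})$ via semicontinuity of the fibered sets. As a preliminary, Lemma~\ref{fibfundlem}-\ref{fibfundlem4} combined with compactness of $X$ yields a common $R_1>0$ with $\{|z|>R_1\}\subset A_y(f)$ and $|f_{y,1}(z)|>R_1$ whenever $|z|>R_1$, for every $y\in X$; hence $K_y(f)\subset\{|z|\le R_1\}$ uniformly. (If (\ref{fcc1}) holds, enlarge $R_1$ past the bound on $\pi_{\CCI}(P(f))\setminus\{\infty\}$.) Joint continuity of the skew product then makes $y\mapsto K_y(f)$ upper semicontinuous in the Hausdorff sense.

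For $(\ref{fcc1})\Rightarrow(\ref{fcc2})$: fix $x\in X$ and set $V_n:=f_{x,n}^{-1}(\{|z|>R_1\})$. The $V_n$ are nested and exhaust $A_x(f)$. Since $f_{x,n}^{-1}(\infty)=\{\infty\}$ with full multiplicity $\deg(f_{x,n})$, a degree count forces each $V_n$ to be connected. Under (\ref{fcc1}), the only critical value of $f_{x,n}$ in $\{|z|>R_1\}$ is $\infty$; Riemann--Hurwitz then gives $\chi(V_n)=\deg(f_{x,n})-(\deg(f_{x,n})-1)=1$, so each $V_n$ is a topological disk. Hence $A_x(f)$ is simply connected and $J_x(f)=\partial A_x(f)$ is connected. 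For $(\ref{fcc2})\Rightarrow(\ref{fcc1})$: if every $A_y(f)$ is simply connected, the proper map $f_{y,1}\colon A_y(f)\to A_{g(y)}(f)$ between topological disks has full degree $\deg(f_{y,1})$ and ramification already $\deg(f_{y,1})-1$ concentrated at $\infty$, so Riemann--Hurwitz forbids any additional finite critical point of $f_{y,1}$ inside $A_y(f)$. Thus every finite critical orbit is trapped in $K_{g^n(y)}(f)\subset\{|z|\le R_1\}$, giving the uniform bound in (\ref{fcc1}).

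For $(\ref{fcc2})\Rightarrow(\ref{fcc3})$: fix $x$, and for any sequence $x_n\to x$ pass to a subsequence along which $J_{x_n}(f)$ converges in Hausdorff topology to some compact set $K^*$. A Hausdorff limit of connected compact sets is connected, and lower semicontinuity of $y\mapsto J_y(f)$ (Lemma~\ref{fibfundlem}-\ref{fibfundlem2}) forces $J_x(f)\subset K^*$. Since $\hat J_x(f)$ is the union of all such subsequential Hausdorff limits over sequences $x_n\to x$, and each is connected and contains the nonempty $J_x(f)$, the set $\hat J_x(f)$ is connected. For $(\ref{fcc3})\Rightarrow(\ref{fcc2})$: by upper semicontinuity, $\hat J_x(f)\subset K_x(f)$. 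If $J_x(f)$ were disconnected for some $x$, then so would be $K_x(f)=A\sqcup B$ with $A,B$ nonempty, disjoint, compact, and separable by disjoint open $U\supset A$, $V\supset B$ in $\CCI$. Any $\CCI$-boundary point of $A$ is adherent to $\CCI\setminus A=B\cup A_x(f)$; since $A\cap\overline B=\emptyset$ it must be adherent to $A_x(f)$, so $\partial A\subset J_x(f)\cap A\ne\emptyset$ and, symmetrically, $J_x(f)\cap B\ne\emptyset$. Then $\hat J_x(f)\supset J_x(f)$ meets both $A$ and $B$, splitting as the disjoint union of the nonempty relatively open pieces $\hat J_x(f)\cap U$ and $\hat J_x(f)\cap V$, contradicting (\ref{fcc3}).

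The main obstacle is the direction $(\ref{fcc3})\Rightarrow(\ref{fcc2})$: $\hat J_x(f)$ can strictly contain $J_x(f)$ (as in the Siegel-disk example following Lemma~\ref{fibfundlem}), so a priori one worries that the extra points might bridge distinct components of a disconnected $J_x(f)$. The key that breaks this worry is the two-sided sandwich $J_x(f)\subset\hat J_x(f)\subset K_x(f)$ coming from lower and upper semicontinuity, which forces the connectedness of $\hat J_x(f)$ to be equivalent to that of $K_x(f)$, hence to that of $J_x(f)$.
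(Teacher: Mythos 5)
Your proof is correct, and it overlaps with the paper's own argument on two of the three legs: your $(\ref{fcc1})\Rightarrow(\ref{fcc2})$ (exhaust $A_{x}(f)$ by the nested preimages of a forward-invariant neighborhood of $\infty $ and apply Riemann--Hurwitz) and your $(\ref{fcc2})\Rightarrow(\ref{fcc3})$ (Hausdorff limits of the connected sets $J_{x^{n}}(f)$ combined with lower semicontinuity) are exactly what the paper does. Where you diverge is in how the equivalence is closed: the paper proves the single implication $(\ref{fcc3})\Rightarrow(\ref{fcc1})$ --- $A_{x}(f)$ is a component of $\CCI \setminus \hat{J}_{x}(f)$, hence simply connected under $(\ref{fcc3})$, and Riemann--Hurwitz applied to $(f_{x,1})^{-1}(A_{g(x)}(f))=A_{x}(f)$ excludes finite critical points from $A_{x}(f)$ --- whereas you prove two implications, $(\ref{fcc2})\Rightarrow(\ref{fcc1})$ (the same Riemann--Hurwitz argument run with $J_{x}(f)$ in place of $\hat{J}_{x}(f)$) and $(\ref{fcc3})\Rightarrow(\ref{fcc2})$. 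The latter is your genuinely new ingredient: the sandwich $J_{x}(f)\subset \hat{J}_{x}(f)\subset K_{x}(f)$, with the right-hand inclusion coming from upper semicontinuity of $x\mapsto K_{x}(f)$, lets a separation of $K_{x}(f)$ cut $\hat{J}_{x}(f)$ into two nonempty relatively open pieces. This is a clean, self-contained way to dispose of the worry that points of $\hat{J}_{x}(f)\setminus J_{x}(f)$ might bridge components of $J_{x}(f)$; the paper sidesteps the issue entirely by never needing $(\ref{fcc3})\Rightarrow(\ref{fcc2})$ directly. The cost is that your route is slightly longer (four implications instead of three) and leans on one additional unproved standard fact: the passage from ``$J_{x}(f)$ disconnected'' to ``$K_{x}(f)$ disconnected,'' whose contrapositive ($K_{x}(f)$ connected implies $\partial K_{x}(f)$ connected for the full compact set $K_{x}(f)$, i.e.\ the classical ``$K$ connected iff $J$ connected'') rests on unicoherence of the sphere. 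This is of the same order as the unproved fact the paper itself uses when it passes from simple connectivity of $A_{x}(f)$ to connectedness of $\partial A_{x}(f)=J_{x}(f)$, so I would not call it a gap, but a one-line justification there would make the step airtight.
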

\begin{proof} 
First, we show \ref{fcc1} $\Rightarrow $\ref{fcc2}. 
Suppose that \ref{fcc1} holds.
Let 
$R>0$ be a number such that 
for each $x\in X$, 
$B:=\{ y\in \CCI 
\mid |y|>R\} \subset A_{x}(f)$ 
and $\overline{f_{x,1}(B)}\subset B.$ 
Then, for each $x\in X$, we have 
$A_{x}(f)=\bigcup _{n\in \NN }(f_{x,n})^{-1}(B)$ 
and $(f_{x,n})^{-1}(B)\subset 
(f_{x,n+1})^{-1}(B)$, 
for each $n\in \NN .$ Furthermore, since 
we assume \ref{fcc1}, we see that 
for each $n\in \NN $, $(f_{x,n})^{-1}(B)$ 
is a simply connected domain, by the Riemann-Hurwitz formula. 
Hence, for each $x\in X$, 
$A_{x}(f)$ is a simply connected domain.   
Since $\partial A_{x}(f)=J_{x}(f)$ for each $x\in X,$ 
we conclude that for each $x\in X$, 
$J_{x}(f)$ is connected. 
Hence, we have shown \ref{fcc1} $\Rightarrow $ \ref{fcc2}. 

 Next, we show 
 \ref{fcc2} $\Rightarrow $ \ref{fcc3}. 
 Suppose that \ref{fcc2} holds. 
 Let $z_{1}\in \hat{J}_{x}(f)$ and $z_{2}\in J_{x}(f)$ be two points. 
Let 
 $\{ x^{n}\} _{n\in \NN }$ be a sequence 
 such that $x^{n}\rightarrow x$ as $n\rightarrow \infty $, and 
 such that 
 $d(z_{1},J_{x^{n}}(f))\rightarrow 0$ as 
 $n\rightarrow \infty .$ We may assume that 
 there exists a non-empty compact set $K$ in 
 $\CCI $ such that 
 $J_{x^{n}}(f)\rightarrow K$ as $n\rightarrow \infty $, 
 with respect to the Hausdorff topology in the 
 space of non-empty compact sets in $\CCI .$ 
 Since we assume \ref{fcc2}, 
 $K$ is connected. 
 By Lemma~\ref{fibfundlem}-\ref{fibfundlem2}, 
 we have $d(z_{2}, J_{x^{n}}(f))\rightarrow 0$ as 
 $n\rightarrow \infty .$ Hence, 
 $z_{i}\in K$ for each $i=1,2.$ 
 Therefore, $z_{1}$ and $z_{2}$ belong to the same connected 
 component of $\hat{J}_{x}(f).$ Thus, 
 we have shown \ref{fcc2} $\Rightarrow $ \ref{fcc3}. 

 Next, we show \ref{fcc3} $\Rightarrow $ \ref{fcc1}. 
 Suppose that \ref{fcc3} holds.
 It is easy to see that 
 $A_{x}(f)\cap \hat{J}_{x}(f)=\emptyset $ for each 
 $x\in X.$ 
 Hence, $A_{x}(f)$ is a connected component of 
 $\CCI \setminus \hat{J}_{x}(f).$ 
 Since we assume \ref{fcc3}, we have that 
 for each $x\in X$, 
 $A_{x}(f)$ is a simply connected domain. 
 Since $(f_{x,1})^{-1}(A_{g(x)}(f))=
 A_{x}(f)$, the Riemann-Hurwitz formula implies that 
 for each $x\in X$, 
 there exists no critical point of 
 $f_{x,1}$ in $A_{x}(f)\cap \CC .$ 
 Therefore, we obtain \ref{fcc1}. 
 Thus, we have shown \ref{fcc3} $\Rightarrow $ \ref{fcc1}.
\end{proof}   
\begin{cor}
\label{fibconncor}
Let $G=\langle h_{1},h_{2}\rangle \in {\cal G}.$ Then, 
$h_{1}^{-1}(J(h_{2}))$ is connected.
\end{cor}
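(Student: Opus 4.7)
The plan is to realize $h_{1}^{-1}(J(h_{2}))$ as a fiberwise Julia set $J_{\g }(f)$ of the skew product $f$ associated with $\G :=\{ h_{1},h_{2}\} $, and then deduce its connectedness from Lemma~\ref{fibconnlem}.

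First, I would pick the particular sequence $\g :=(h_{1},h_{2},h_{2},h_{2},\ldots )\in \GN $, so that $f_{\g ,n}=h_{2}^{n-1}\circ h_{1}$ for each $n\geq 1$. For the shifted sequence $\sigma (\g )=(h_{2},h_{2},\ldots )$ we have $f_{\sigma (\g ),n}=h_{2}^{n}$, whence $J_{\sigma (\g )}(f)=J(h_{2})$. Lemma~\ref{fibfundlem}-\ref{fibfundlem1} then gives
\[
J_{\g }(f)=(f_{\g ,1})^{-1}\bigl(J_{\sigma (\g )}(f)\bigr)=h_{1}^{-1}(J(h_{2})).
\]

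Next, I would verify the hypothesis \ref{fcc1} of Lemma~\ref{fibconnlem}, i.e. that $\pc (P(f))\setminus \{ \infty \} $ is bounded in $\CC $. For any $\rho \in \GN $ and any critical point $c$ of $\rho _{1}\in \G $,
\[
f_{\rho ,n}(c)=\rho _{n}\circ \cdots \circ \rho _{2}(\rho _{1}(c)),
\]
where $\rho _{1}(c)\in CV(\rho _{1})\subset \bigcup _{h\in \G }CV(h)$ and $\rho _{n}\circ \cdots \circ \rho _{2}\in G\cup \{ \text{Id}\} $. By the formula in Remark~\ref{pcbrem} for $\Lambda =\G $, this puts $f_{\rho ,n}(c)$ into $P(G)$. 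Taking closures yields $\pc (P(f))\subset P(G)$, and thus $\pc (P(f))\setminus \{ \infty \} \subset P^{\ast }(G)$, which is bounded because $G\in {\cal G}$.

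Finally, Lemma~\ref{fibconnlem} (whose remaining hypothesis $d(x)\geq 2$ holds since every element of $\G $ has degree $\geq 2$) yields that $J_{x}(f)$ is connected for every $x\in \GN $; specialising to $x=\g $ gives that $h_{1}^{-1}(J(h_{2}))=J_{\g }(f)$ is connected, completing the proof. I do not foresee a substantial obstacle; the only point requiring care is the identification $J_{\g }(f)=h_{1}^{-1}(J(h_{2}))$ via Lemma~\ref{fibfundlem}-\ref{fibfundlem1} and the inclusion $\pc (P(f))\subset P(G)$ via Remark~\ref{pcbrem}, both of which are direct consequences of results already established in the paper.
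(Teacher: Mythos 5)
Your proposal is correct and follows essentially the same route as the paper: realizing $h_{1}^{-1}(J(h_{2}))$ as $J_{\g }(f)$ for $\g =(h_{1},h_{2},h_{2},\ldots )$ via Lemma~\ref{fibfundlem}-\ref{fibfundlem1} and then applying Lemma~\ref{fibconnlem}. The only difference is that you spell out the verification that $\pc (P(f))\setminus \{ \infty \} $ is bounded (which the paper leaves implicit), and that check is correct.
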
 
\begin{proof}
Let $f:\GN \times \CCI \rightarrow \GN \times \CCI $ 
be the skew product associated with the family 
$\G =\{ h_{1},h_{2}\} .$ 
Let $\g =(h_{1},h_{2},h_{2},h_{2},h_{2},\ldots )\in \GN .$ 
Then, by Lemma~\ref{fibfundlem}-\ref{fibfundlem1},
 we have $J_{\g }(f)=h_{1}^{-1}(J(h_{2})).$ 
From Lemma~\ref{fibconnlem}, it follows that 
$h_{1}^{-1}(J(h_{2}))$ is connected.
\end{proof} 
 
\begin{lem}
\label{fibconnlem2}
Let $G$ be a polynomial semigroup
generated by a compact subset $\G $ of 
{\em Poly}$_{\deg \geq 2}.$ 
Let $f:\GN \times \CCI 
\rightarrow \GN \times \CCI $ be 
the skew  product associated with the 
family $\G .$
Suppose that  
$G\in {\cal G}.$ Then 
for each $\g =(\g _{1},\g _{2},\ldots ,)\in 
\GN $, 
the sets $J_{\g }(f),\ \hat{J} _{\g }(f)$, and 
$\bigcap _{j=1}^{\infty }
 \g _{1}^{-1}\cdots \g _{j}^{-1}(J(G))$ 
 are connected.
\end{lem}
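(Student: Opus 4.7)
The plan is to reduce the lemma to an application of the equivalences in Lemma~\ref{fibconnlem} together with the formula for $\hat{J}_\gamma(f)$ in Lemma~\ref{fiblem}(\ref{fibfundlem5}). First observe that since $G\in\mathcal{G}$ contains a degree $\geq 2$ map, $\sharp(J(G))\geq 3$, so Lemma~\ref{fiblem}(\ref{fibfundlem5}) applies and gives
\[
\hat{J}_\gamma(f)\;=\;\bigcap_{j=1}^{\infty}\gamma_1^{-1}\cdots\gamma_j^{-1}(J(G))
\]
for every $\gamma\in\Gamma^{\mathbb{N}}$. Hence the last two sets in the statement literally coincide, and it remains to prove that $J_\gamma(f)$ and $\hat{J}_\gamma(f)$ are connected for every $\gamma\in\Gamma^{\mathbb{N}}$.

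The key step is to check hypothesis (\ref{fcc1}) of Lemma~\ref{fibconnlem} for the skew product $f:\Gamma^{\mathbb{N}}\times\hat{\mathbb{C}}\to\Gamma^{\mathbb{N}}\times\hat{\mathbb{C}}$, namely that $\pi_{\hat{\mathbb{C}}}(P(f))\setminus\{\infty\}$ is bounded in $\mathbb{C}$. For this I would unwind the definitions: a point $(\gamma,y)\in C(f)$ is one where $y$ is a critical point of $f_{\gamma,1}=\gamma_1\in\Gamma$, and
\[
\pi_{\hat{\mathbb{C}}}\bigl(f^n(\gamma,y)\bigr)\;=\;\gamma_n\circ\cdots\circ\gamma_1(y)\;=\;(\gamma_n\circ\cdots\circ\gamma_2)\bigl(\gamma_1(y)\bigr),
\]
which is the image under an element of $G\cup\{\mathrm{Id}\}$ of a critical value of the generator $\gamma_1\in\Gamma$. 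Therefore
\[
\pi_{\hat{\mathbb{C}}}\Bigl(\bigcup_{n\in\mathbb{N}} f^n(C(f))\Bigr)\;\subset\;\bigcup_{g\in G\cup\{\mathrm{Id}\}} g\Bigl(\bigcup_{h\in\Gamma}CV(h)\Bigr),
\]
and taking closures and using Remark~\ref{pcbrem} gives $\pi_{\hat{\mathbb{C}}}(P(f))\subset P(G)$. Since $G\in\mathcal{G}$, the set $P^\ast(G)=P(G)\setminus\{\infty\}$ is bounded in $\mathbb{C}$, so $\pi_{\hat{\mathbb{C}}}(P(f))\setminus\{\infty\}$ is bounded in $\mathbb{C}$, which is exactly condition (\ref{fcc1}) in Lemma~\ref{fibconnlem}.

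Applying Lemma~\ref{fibconnlem} (the implications (\ref{fcc1})$\Rightarrow$(\ref{fcc2}) and (\ref{fcc1})$\Rightarrow$(\ref{fcc3}) via (\ref{fcc2})$\Rightarrow$(\ref{fcc3})) with $X=\Gamma^{\mathbb{N}}$, $g=\sigma$, and the condition $d(\gamma)=\deg(\gamma_1)\geq 2$ (which holds since $\Gamma\subset\text{Poly}_{\deg\geq 2}$), we conclude that $J_\gamma(f)$ and $\hat{J}_\gamma(f)$ are connected for every $\gamma\in\Gamma^{\mathbb{N}}$, finishing the proof in view of the identification at the start.

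The only real obstacle here is the verification of the inclusion $\pi_{\hat{\mathbb{C}}}(P(f))\subset P(G)$, and this is purely formal once one writes out $f^n(C(f))$ and invokes Remark~\ref{pcbrem}; commutativity of $\pi_{\hat{\mathbb{C}}}$ with closure under the continuous projection from the compact product $\Gamma^{\mathbb{N}}\times\hat{\mathbb{C}}$ is the one technical point to handle carefully (using compactness of $\Gamma^{\mathbb{N}}$ to pass the closure through $\pi_{\hat{\mathbb{C}}}$).
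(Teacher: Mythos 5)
Your proposal is correct and follows exactly the paper's route: the paper's own proof is the one-line ``From Lemma~\ref{fiblem}-\ref{fibfundlem5} and Lemma~\ref{fibconnlem}, the lemma follows,'' and you have simply filled in the implicit verification that $\pi_{\CCI }(P(f))\subset P(G)$ so that condition \ref{fcc1} of Lemma~\ref{fibconnlem} holds. (One small remark: the inclusion $\pi_{\CCI }(\overline{A})\subset \overline{\pi_{\CCI }(A)}$ needs only continuity of the projection, so the appeal to compactness at the end is unnecessary.)
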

\begin{proof}
From Lemma~\ref{fiblem}-\ref{fibfundlem5} and 
Lemma~\ref{fibconnlem}, the lemma follows.
\end{proof}
\begin{lem}
\label{fiborder}
Under the same assumption as that in Lemma~\ref{fibconnlem2}, 
let $\g ,\rho \in \GN $ be two elements with 
$J_{\g }(f)\cap J_{\rho }(f)=\emptyset .$ 
Then, either $J_{\g }(f)< J_{\rho }(f)$ 
or $J_{\rho }(f)< J_{\g }(f).$
\end{lem}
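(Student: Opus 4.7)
The plan is to perform a topological case analysis on the relative position of $J_{\gamma}(f)$ and $J_{\rho}(f)$, and to rule out the possibility that the two sets sit ``outside each other'' in $\hat{\CC}$ by using the key fact that $P^{\ast }(G)$ is non-empty and is contained in every filled-in fiber Julia set.

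First I would record the topological setup. By Lemma~\ref{fibconnlem2}, both $J_{\gamma}(f)$ and $J_{\rho}(f)$ are connected compact subsets of $\CC$, and by Lemma~\ref{fibfundlem}-\ref{fibfundlem4}, for each $\tau \in \GN$ the set $A_{\tau }(f)$ is a connected open neighborhood of $\infty $ with $J_{\tau }(f) = \partial A_{\tau }(f) = \partial K_{\tau }(f)$. Because $J_{\tau }(f)$ is connected, $A_{\tau }(f)$ is in fact simply connected, so $K_{\tau }(f) = \CCI \setminus A_{\tau }(f)$ is a compact connected subset of $\CC $.

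The crucial step is to observe that $P^{\ast }(G) \subset K_{\gamma }(f) \cap K_{\rho }(f)$, and that $P^{\ast }(G) \neq \emptyset $ because every element of $G$ is a polynomial of degree $\geq 2$ and hence has a finite critical value. The inclusion holds because $P^{\ast }(G)$ is bounded and, by Remark~\ref{pcbrem}, forward invariant under $G$, so every point of $P^{\ast }(G)$ has bounded forward orbit under every sequence in $\GN $, and therefore lies in $K_{\tau }(f)$ for each $\tau \in \GN $. I expect this postcritical inclusion to be the central (and only non-routine) content of the proof: it is exactly what upgrades a purely topological dichotomy into the required surrounding order.

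With this in hand, the rest is a direct case split. Since $J_{\gamma }(f)$ is connected and disjoint from $J_{\rho }(f) = \partial K_{\rho }(f)$, it lies in a single component of $\CCI \setminus J_{\rho }(f)$, so either (A) $J_{\gamma }(f) \subset A_{\rho }(f)$ or (B) $J_{\gamma }(f) \subset \mbox{int}(K_{\rho }(f))$; symmetrically (A$'$) or (B$'$) holds for $J_{\rho }(f)$ relative to $J_{\gamma }(f)$. Since $J_{\gamma }(f)$ is connected, case (B) is precisely $J_{\gamma }(f) < J_{\rho }(f)$, and case (B$'$) is precisely $J_{\rho }(f) < J_{\gamma }(f)$. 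It therefore suffices to exclude the simultaneous occurrence of (A) and (A$'$). Assuming both, $K_{\gamma }(f)$ would be connected and disjoint from $J_{\rho }(f)$, forcing either $K_{\gamma }(f) \subset A_{\rho }(f)$ or $K_{\gamma }(f) \subset \mbox{int}(K_{\rho }(f))$; the former contradicts $K_{\gamma }(f) \cap K_{\rho }(f) \neq \emptyset $, while the latter contradicts (A) since it places $J_{\gamma }(f) \subset K_{\rho }(f)$, disjoint from $A_{\rho }(f)$.
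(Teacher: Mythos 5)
Your proposal is correct and follows essentially the same route as the paper: the paper also argues by contradiction, noting that if neither surrounding relation holds then each Julia set lies in the unbounded complementary component of the other, forcing $K_{\rho }(f)\subset A_{\g }(f)$, which contradicts $\emptyset \neq P^{\ast }(G)\subset \hat{K}(G)\subset K_{\rho }(f)\cap K_{\g }(f).$ Your identification of the non-emptiness and forward invariance of $P^{\ast }(G)$ as the crux, together with the connectedness of the fiberwise Julia sets from Lemma~\ref{fibconnlem2}, matches the paper's argument exactly.
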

\begin{proof}
Let $\g ,\rho \in \GN $ with 
$J_{\g }(f)\cap J_{\rho }(f)=\emptyset .$ 
Suppose that the statement 
``either $J_{\g }(f)<J_{\rho }(f)$ or $J_{\rho }(f)<J_{\g }(f)$" 
is not true. 
Then, Lemma~\ref{fibconnlem} implies that 
$J_{\g }(f)$ is included in the unbounded 
component of $\CC \setminus J_{\rho }(f)$, and that 
$J_{\rho }(f)$ is included in the unbounded 
component of $\CC \setminus J_{\g}(f).$ 
From Lemma~\ref{fibfundlem}-\ref{fibfundlem4}, it follows that 
$K_{\rho }(f)$ is included in the unbounded 
component $A_{\g }(f)\setminus \{ \infty \} $ 
of $\CC \setminus J_{\g }(f).$ 
However, it causes a contradiction, since 
$\emptyset \neq P^{\ast }(G)\subset \hat{K}(G)\subset 
K_{\rho }(f)\cap K_{\g }(f).$  
\end{proof}
\begin{df}
Let 
$f:\GNCR $ be a polynomial skew product 
over $g:X\rightarrow X.$ 
Let $p\in \CC $ and 
$\epsilon >0.$ 
We set \\ 
${\cal F}_{f,p,\epsilon }:= 
\{\alpha :D(p,\epsilon )\rightarrow \CC \mid 
\alpha \mbox{ is a well-defined branch of }
(f_{x,n})^{-1}, x\in X, n\in \NN \} .$  
\end{df}
\begin{lem}
\label{invnormal}
Let $f:\GNCR $ be a polynomial skew product over 
$g:X\rightarrow X$ such that for each $x\in X$, 
$d(x)\geq 2.$ 
Let $R>0,\epsilon >0$, and \\ 
${\cal F}:=
\{ \alpha \circ \beta :D(0,1)\rightarrow \CC 
\mid 
\beta :D(0,1)\cong D(p,\epsilon ),\ 
\alpha :D(p,\epsilon )\rightarrow \CC ,\ 
\alpha \in {\cal F}_{f,p,\epsilon },\ p\in D(0,R)\} .$ 
Then, ${\cal F} $ is normal in $D(0,1).$ 
\end{lem}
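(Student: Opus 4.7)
My plan is to prove this by showing that every function in ${\cal F}$ maps $D(0,1)$ into a single bounded disk in $\CC$, and then to invoke Montel's theorem (the uniformly bounded version).

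The main step will be to produce a uniform ``escape radius'' $M$ for the whole family. Since $X$ is compact and $x \mapsto d(x) \geq 2$ is continuous (hence locally constant), and the coefficients of $f_{x,1}$ depend continuously on $x$ with the leading coefficient nonzero, a standard estimate shows that there exists $M > 0$ with $M \geq R + \epsilon$ such that
\[
|z| > M \ \Longrightarrow \ |f_{x,1}(z)| > 2|z| \quad \text{for every } x \in X.
\]
Iterating this inequality gives $|f_{x,n}(z)| > 2^n|z| > M$ whenever $|z| > M$, for every $n \in \NN$ and every $x \in X$. Consequently, if $f_{x,n}(z) \in \overline{D(0, R+\epsilon)}$ for some $x$ and $n$, then $|z| \leq M$, i.e.
\[
(f_{x,n})^{-1}\bigl(\overline{D(0,R+\epsilon)}\bigr) \subset \overline{D(0,M)} \quad \text{for all } x \in X, \ n \in \NN.
\]

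Now fix any element $\alpha \circ \beta \in {\cal F}$, where $\alpha$ is a branch of $(f_{x,n})^{-1}$ on $D(p,\epsilon)$ with $p \in D(0,R)$, and $\beta$ is a conformal isomorphism $D(0,1) \cong D(p,\epsilon)$. Since $D(p,\epsilon) \subset D(0,R+\epsilon)$, the image $\alpha(D(p,\epsilon))$ lies in a connected component of $(f_{x,n})^{-1}(D(p,\epsilon))$, hence in $\overline{D(0,M)}$ by the previous step. Therefore $\alpha \circ \beta$ maps $D(0,1)$ into the fixed bounded set $\overline{D(0,M)} \subset \CC$.

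Since every function in ${\cal F}$ is holomorphic on $D(0,1)$ and takes values in the common bounded set $\overline{D(0,M)}$, the family ${\cal F}$ is uniformly bounded, and Montel's theorem gives normality on $D(0,1)$. The only nontrivial point is the uniform escape estimate, which is where compactness of $X$ together with the hypothesis $d(x) \geq 2$ for all $x$ is essential; everything else is a direct bookkeeping of what ``well-defined branch'' means.
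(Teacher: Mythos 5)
Your proof is correct and follows essentially the same route as the paper: the paper also produces a forward-invariant ball $B$ around $\infty$ disjoint from $D(0,R+\epsilon)$ (your escape radius $M$), observes that every inverse branch on $D(p,\epsilon)$ must land in the bounded complement $\CCI\setminus B$, and concludes normality by Montel. Your write-up merely fills in the compactness argument behind the uniform escape estimate, which the paper leaves implicit.
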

\begin{proof}
Since $d(x)\geq 2$ for each $x\in X$, 
there exists a ball $B$ around $\infty $ 
with $B\subset \CCI \setminus D(0,R+\epsilon )$ such that 
for each $x\in X$, $f_{x,1}(B)\subset B.$ 
Let $p\in D(0,R).$ Then, for each 
$\alpha \in {\cal F}_{f,p,\epsilon }$, 
$\alpha (D(p,\epsilon ))\subset \CCI \setminus B.$ 
Hence, ${\cal F} $ is normal in $D(0,1).$ 
\end{proof}
\begin{df}
For a polynomial semigroup $G$ with 
$\infty \in F(G)$, we denote by 
$F_{\infty }(G)$ the connected component of $F(G)$ containing 
$\infty .$ Moreover, for a polynomial $g$ with 
$\deg (g)\geq 2$, we set $F_{\infty }(g):= 
F_{\infty }(\langle g\rangle ).$ (Note that if $\Gamma $ is a non-empty compact subset 
of  Poly$_{\deg \geq 2}$, then $\infty \in F(\langle \Gamma \rangle ).$)  
\end{df}
\begin{lem}
\label{constlimlem}
Let $G$ be a  
polynomial semigroup generated by a compact subset 
$\G $ of {\em Poly}$_{\deg \geq 2}.$ 
If a sequence $\{ g_{n}\} _{n\in \NN }$ of elements of $G$ tends to a 
constant $w_{0}\in \CCI $ locally uniformly on a domain $V\subset \CCI $, 
then $w_{0}\in P(G).$   
\end{lem}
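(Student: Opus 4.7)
The plan is to argue by contradiction: assume $w_0 \notin P(G)$. First I will rule out $w_0 = \infty$. Since every element $g \in G$ is a polynomial of degree $\geq 2$, $\infty$ is a critical point of $g$ (viewed on $\CCI$) and $g(\infty) = \infty$, so $\infty \in CV(g) \subset P(G)$. Hence we may assume $w_0 \in \CC$, and there is an $\epsilon > 0$ such that $\overline{D(w_0, 2\epsilon)} \cap P(G) = \emptyset$.

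Next I will exploit inverse branches. Since $\overline{D(w_0, 2\epsilon)}$ contains no critical value of any $g \in G$, for each $g \in G$ the preimage $g^{-1}(D(w_0, 2\epsilon))$ is an unramified covering of $D(w_0, 2\epsilon)$, so each connected component maps biholomorphically onto $D(w_0, 2\epsilon)$. Fix $z_0 \in V$; by locally uniform convergence, $g_n(z_0) \in D(w_0, \epsilon)$ for all large $n$, so we obtain a unique inverse branch $\phi_n \colon D(w_0, 2\epsilon) \to \CC$ of $g_n$ with $\phi_n(g_n(z_0)) = z_0$. Writing $g_n = \gamma_{k_n} \circ \cdots \circ \gamma_1$ with $\gamma_j \in \G$, we recognize $g_n = f_{\tilde{\g}_n, k_n}$ for the skew product $f$ associated with $\G$ and any $\tilde{\g}_n \in \GN$ extending $(\gamma_1, \ldots, \gamma_{k_n})$. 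Lemma~\ref{invnormal} therefore guarantees that the family $\{\phi_n\}$ is normal on $D(w_0, 2\epsilon)$.

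I will then pass to a subsequential limit. After extracting a subsequence, $\phi_n \to \phi$ locally uniformly on $D(w_0, 2\epsilon)$ for some holomorphic (possibly constant) $\phi$. Since $\phi_n(g_n(z_0)) = z_0$ while $g_n(z_0) \to w_0$, uniform convergence on a neighborhood of $w_0$ gives $\phi(w_0) = z_0$. For an arbitrary $z_1 \in V$, choose a compact connected set $K \subset V$ containing $z_0$ and $z_1$. Locally uniform convergence of $g_n$ to $w_0$ on $K$ yields $g_n(K) \subset D(w_0, \epsilon)$ for large $n$, so $K$ lies in a single component of $g_n^{-1}(D(w_0, 2\epsilon))$; since $z_0 \in K$, this component is precisely the one on which $\phi_n$ is the defined inverse. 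Hence $\phi_n \circ g_n\big|_K = \mathrm{id}_K$. Passing to the limit in $\phi_n(g_n(z_1)) = z_1$, using $g_n(z_1) \to w_0$ together with uniform convergence of $\phi_n$ near $w_0$, yields $z_1 = \phi(w_0) = z_0$. Since $z_1 \in V$ was arbitrary, $V = \{z_0\}$, contradicting that $V$ is a domain.

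The proof has no really hard step; the only point requiring care is verifying that the whole compact connected set $K$, not merely the base point $z_0$, lies in the component of $g_n^{-1}(D(w_0, 2\epsilon))$ on which $\phi_n$ is defined, so that the inversion identity $\phi_n \circ g_n = \mathrm{id}$ holds throughout $K$. This is secured by connectedness of $K$ together with the estimate $g_n(K) \subset D(w_0, \epsilon)$; after that, the combination of Lemma~\ref{invnormal}, a subsequential limit, and the continuity of the limit at $w_0$ closes the argument.
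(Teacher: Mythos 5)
Your proof is correct and follows essentially the same route as the paper's: both reduce to $w_{0}\in \CC$ (since $\infty \in P(G)$), take single-valued inverse branches of $g_{n}$ on a disk around $w_{0}$ disjoint from $P(G)$ normalized at $z_{0}$, obtain normality of these branches because their images avoid a fixed neighborhood of $\infty$, and derive a contradiction from the fact that $g_{n}$ collapses a neighborhood of $z_{0}$ onto the single point $w_{0}$. The only cosmetic difference is that you invoke Lemma~\ref{invnormal} to get normality, whereas the paper reruns the same Montel-type argument directly inside the proof.
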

\begin{proof}
Since $\infty \in P(G)$, we may assume that 
$w_{0}\in \CC .$ 
Suppose $w_{0}\in \CC \setminus P(G).$ 
Then, there exists a $\delta >0$ such that 
$B(w_{0}, 2\delta )\subset \CC \setminus P(G).$ 
Let $z_{0}\in V$ be a point. Then, for each large $n\in \NN $, 
there exists a well-defined branch 
$\alpha _{n}$
of $g_{n}^{-1}$ on $B(w_{0},2\delta )$ such that 
$\alpha _{n}(g_{n}(z_{0}))=z_{0}.$ 
Let $B:=B(w_{0},\delta ).$  
Since $\G $ is compact, there exists a connected component $F_{\infty }(G)$ of 
$F(G)$ containing $\infty .$ 
Let $C$ be a compact neighborhood of $\infty $ 
in $F_{\infty }(G).$   
Then, we must have that there exists a number $n_{0}$ such that  
$\alpha _{n}(B)\cap C=\emptyset $ for each $n\geq n_{0}$, 
since $g_{n}\rightarrow \infty 
$ uniformly on $C$ as $n\rightarrow \infty $, 
which follows from that 
$\deg (g_{n})\rightarrow \infty $ and 
local degree at $\infty $ of $g_{n}$ tends to 
$\infty $ as $n\rightarrow \infty .$ 
Hence, $\{ \alpha _{n} |_{B}\}_{n\geq n_{0}} $ is normal 
in $B.$ However, for a small 
$\epsilon $ so that $B(z_{0},2\epsilon )\subset V$, 
we have $g_{n}(B(z_{0},\epsilon ))\rightarrow w_{0}$ 
as $n\rightarrow \infty $, 
and this is a contradiction. Hence, we must have that 
$w_{0}\in P(G).$ 
\end{proof}
%
%
\section{Proofs of the main results}
\label{Proofs} 
In this section, we demonstrate the main results. 

We first need the following. 
\begin{thm} 
\label{hypskewqc}
({\bf Uniform fiberwise quasiconformal surgery})
Let $f:X\times \CCI \rightarrow X\times \CCI $ be a 
polynomial skew product over $g:X\rightarrow X$ such that 
for each $x\in X$, $d(x)\geq 2.$ 
Suppose that $f$ is hyperbolic and that 
$\pi _{\CCI }(P(f))\setminus \{ \infty \} $ is bounded in 
$\CC .$ Moreover, suppose that 
for each $x\in X$,\ 
{\em int}$(K_{x}(f))$ is connected. 
Then, there exists a constant $K$ such that 
for each $x\in X,\ J_{x}(f)$ is a $K$-quasicircle.   
\end{thm}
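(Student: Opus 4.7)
My plan is to perform a uniform fiberwise version of the classical quasiconformal surgery by which a hyperbolic polynomial of degree $d$ with connected Julia set and every critical point in a single bounded Fatou component is shown to have Julia set a $K$-quasicircle; the essential new content is to arrange the construction so that the dilatation is independent of the fiber $x\in X$. I would first establish the fiberwise topological picture. By Lemma~\ref{fibconnlem}, boundedness of $\pi_{\CCI}(P(f))\setminus\{\infty\}$ makes each $J_x(f)$ connected, so $A_x(f)$ is simply connected. Every critical point $c\in\CC$ of $f_{x,1}$ lies in $\mathrm{int}(K_x(f))$: the orbit $\{f_{x,n}(c)\}_{n\geq 1}$ lies in $\pi_{\CCI}(P(f))\setminus\{\infty\}$ and is therefore bounded in $\CC$, so $c\in K_x(f)$, while hyperbolicity gives $f_{x,1}(c)\in F_{g(x)}(f)$ and hence $c\in F_x(f)$ via Lemma~\ref{fibfundlem}-\ref{fibfundlem1}. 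Thus $\mathrm{int}(K_x(f))\neq\emptyset$ for every $x$, and combined with the hypothesis that it is connected and with connectedness of $\overline{A_x(f)}$, $\mathrm{int}(K_x(f))$ is simply connected. So $\CCI\setminus J_x(f)$ is a disjoint union of two simply connected domains for every $x$, a necessary topological condition for $J_x(f)$ to be a Jordan curve.

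Next I would carry out the surgery. On $A_x(f)$, the fiberwise B\"ottcher coordinate $\varphi_x^{\infty}:A_x(f)\to\{|w|>1\}$ is a conformal isomorphism conjugating $f_{x,1}$ to $w\mapsto w^{d(x)}$ by Riemann--Hurwitz, since $A_x(f)$ is simply connected and contains no critical point of $f_{x,1}$. On $\mathrm{int}(K_x(f))$, I would fix a Riemann map $\psi_x:\mathbb{D}\to\mathrm{int}(K_x(f))$ normalized by a suitable base point. On a thin annular collar of $J_x(f)$ on the bounded side, I would interpolate quasiconformally between $f_{x,1}$ (near the outer boundary of the collar) and the conjugate $\psi_{g(x)}\circ(w\mapsto w^{d(x)})\circ\psi_x^{-1}$ (near the inner boundary), producing a $K_0$-quasiregular modification $\tilde{f}_x$ of $f_{x,1}$ that coincides with $f_{x,1}$ outside the collar. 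Because hyperbolicity yields a positive uniform distance between $P(f)$ and $\tilde{J}(f)$, and because $X$ is compact with $x\mapsto d(x)$ continuous, the annular collars can be chosen with a uniform positive lower bound on their moduli and $K_0$ can be taken independent of $x$.

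Finally, I would pull back the standard complex structure along the backward orbits of $\tilde{f}$, fiberwise, to obtain an $\tilde{f}_x$-invariant Beltrami coefficient of dilatation at most $K_0$ on each fiber. The measurable Riemann mapping theorem then produces $K$-quasiconformal homeomorphisms $\Phi_x:\CCI\to\CCI$ that conjugate $\tilde{f}_x$ to $w\mapsto w^{d(x)}$, with $K$ depending only on $K_0$ and on $\sup_{x\in X}d(x)$ (finite by compactness of $X$ and continuity of $x\mapsto d(x)$). Since $\tilde{f}_x$ agrees with $f_{x,1}$ on a neighborhood of $J_x(f)$ and the Julia set of $w\mapsto w^{d(x)}$ is $S^1$, one obtains $\Phi_x(J_x(f))=S^1$, so each $J_x(f)$ is a $K$-quasicircle. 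The principal technical obstacle is the uniformity of $K_0$ in $x$: it requires a lower bound, independent of $x$, on the moduli of the interpolation annuli together with uniform geometric control of the Riemann maps $\psi_x$. Both follow from compactness of $X$, the uniform separation of $P(f)$ from $\tilde{J}(f)$ given by hyperbolicity, and the lower semicontinuity of $x\mapsto J_x(f)$ provided by Lemma~\ref{fibfundlem}-\ref{fibfundlem2}, which together prevent geometric degeneration of the fiberwise picture as $x$ varies.
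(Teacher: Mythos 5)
Your overall architecture---replace the dynamics on the interior of $K_x(f)$ by a power map, interpolate quasiregularly on an annulus, pull back the standard structure to an invariant Beltrami coefficient, and straighten with the measurable Riemann mapping theorem, with all constants uniform in $x$ by compactness and hyperbolicity---is the paper's strategy. But there is a genuine gap at the single most delicate point: the placement of the interpolation annulus and the claim that the pulled-back Beltrami coefficient has ``dilatation at most $K_0$.'' That bound holds only if almost every orbit of the modified skew product passes through the non-holomorphic interpolation regions a uniformly bounded number of times; otherwise the dilatation compounds multiplicatively along the orbit. With your choice---a thin annular collar $C_x$ of $J_x(f)$ on the bounded side, on which $\tilde f_x$ interpolates between $f_{x,1}$ (outer edge) and the conjugated power map (inner edge)---this fails: near the outer edge $\tilde f_x=f_{x,1}$ maps points at small distance $\epsilon $ from $J_x(f)$ to points at distance comparable to $\lambda \epsilon $ from $J_{g(x)}(f)$, which for small $\epsilon $ still lie in the collar $C_{g(x)}$ of the next fiber; an orbit can therefore remain in the collars for on the order of $\log (1/\epsilon )$ steps, and the pulled-back dilatation at such points is unbounded.

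The paper avoids this by locating the surgery annulus deep inside $K_x(f)$, around the postcritical set rather than near the Julia set: it chooses Jordan domains $V_x$ containing $\pi ^{-1}(\{x\})\cap P(f)$ minus the point at infinity, compactly contained in $\mbox{int}(K_x(f))$ with uniform separation from $J_x(f)$, passes to an iterate so that $W_x:=(f_x)^{-1}(V_{g(x)})\supset \overline{V_x}$ with $\mbox{mod}\,(W_x\setminus \overline{V_x})$ bounded below independently of $x$ (using \cite[Corollary 2.7]{S4}), and interpolates on the fundamental annulus $W_x\setminus \overline{V_x}$. Since $f_x(W_x\setminus \overline{V_x})\subset V_{g(x)}$, where the modified map is already the holomorphic power map and the $V$'s are forward invariant, each backward orbit meets the union of the annuli at most once; this is exactly what yields $\| \mu _x\| _{\infty }\leq k<1$ uniformly. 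Your argument needs this fundamental-domain property to be stated and verified, and the collar as you describe it does not supply it. A secondary point: with the normalizations $\psi _x(0)=0$, $\psi _x(1)=1$, $\psi _x(\infty )=\infty $ the straightened maps come out as $\hat h_x(z)=c(x)z^{d(x)}$ rather than $z^{d(x)}$, so one must still check $C^{-1}\leq |c(x)|\leq C$ and the normality of the families $\{ \psi _x\} $ and $\{ \psi _x^{-1}\} $ in order to conclude that the non-normality loci are round circles and that the composed conjugacies are uniformly quasiconformal; you assert the exact conjugacy to $w\mapsto w^{d(x)}$ without addressing this.
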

\begin{proof}
Step 1: 
By \cite[Theorem 2.14-(4)]{S1}, the map $x\mapsto J_{x}(f)$ is 
continuous with respect to the Hausdorff topology. 
Hence, there exists a positive constant 
$C_{1}$ such that,  
for each 
$x\in X,\ \inf \{ d(a,b)\mid a\in J^{x}(f),\ b\in \pi ^{-1}(\{ x\} )\cap P^{\ast }(f)\} >C_{1}$,  
where $P^{\ast }(f):= P(f)\setminus \pi _{\CCI }^{-1}(\{ \infty \} )$, 
and $d(\cdot ,\cdot )$ denotes the spherical distance, under the 
canonical identification $\pi ^{-1}(\{ x\} )\cong \CCI .$   
Moreover, from the assumption, we have that,  
for each $x\in X$, int$(K_{x}(f))\neq \emptyset .$ 
Since $X$ is compact, 
it follows that, for each $x\in X,$  
there exists an analytic 
Jordan curve 
$\zeta _{x}$ in 
$K^{x}(f)\cap F^{x}(f)$ 
such that: 
\begin{enumerate}
\item 
$\pi ^{-1}(\{ x\} )\cap P^{\ast }(f)$ is included in the 
bounded component $V_{x}$ of $\pi ^{-1}(\{ x\} )\setminus 
\zeta _{x}$;
\item $\inf _{z\in \zeta _{x}}
d(z,\ J^{x}(f)\cup (\pi ^{-1}(\{ x\} )\cap P^{\ast }(f)))
\geq C_{2}$, where $C_{2}$ is a positive constant 
independent of $x\in X$; and  
\item there exist finitely many Jordan curves 
$\xi _{1},\ldots ,\xi _{k}$ in $\CC $ such that 
for each $x\in X$, there exists a $j$ with 
$\pi _{\CCI }(\zeta _{x})=\xi _{j}.$ 
\end{enumerate}
Step 2: 
By 
\cite[Corollary 2.7]{S4}, 
there exists an $n\in \NN $ such that 
for each $x\in X,\ W_{x}:=(f_{x}^{n})^{-1}(V_{g ^{n}(x)})
\supset \overline{V_{x}}$, 
$\inf \{ d(a,b)\mid a\in \partial W_{x},b\in \partial V_{x}, x\in X\} >0$, 
 and 
mod $(W_{x}\setminus \overline{V_{x}})\geq C_{3},$ where 
$C_{3}$ is a positive constant independent of $x\in X.$ 
In order to prove the theorem, 
since $J_{x}(f^{n})=J_{x}(f)$ for each $x\in X$, 
replacing $f:X\times \CCI \rightarrow X\times \CCI $ by 
$f^{n}:X\times \CCI \rightarrow X\times \CCI $, 
we may assume $n=1.$ 

\noindent Step 3: For each $x\in X$, let 
$\varphi _{x}:\pi ^{-1}(\{ x\} )\setminus 
\overline{V_{x}}\rightarrow 
\pi ^{-1}(\{ x\} )\setminus 
\overline{D(0,\frac{1}{2})}$ be 
a biholomorphic map such that 
$\varphi _{x}(x,\infty )=(x,\infty ),$ 
under the canonical identification $\pi ^{-1}(\{ x\} )\cong \CCI .$ 
We see that $\varphi _{x}$ extends 
analytically over $\partial V_{x}=\zeta _{x}.$ 
For each $x\in X$, we define a quasi-regular map 
$h_{x}: \pi ^{-1}(\{ x\} )\cong \CCI \rightarrow 
\pi ^{-1}(\{ g (x)\} )\cong \CCI $ as 
follows:
$$h_{x}(z):=\begin{cases}
            \varphi _{g (x)}f_{x}\varphi _{x}^{-1}(z),\ 
           \mbox{ if } z\in \varphi _{x}(\pi ^{-1}(\{ x\} )\setminus W_{x}),\\ 
            z^{d(x)},\ \mbox{ if } z\in \overline{D(0,\frac{1}{2})},\\ 
            \tilde{h}_{x}(z),\ \mbox{\ if } z\in \varphi _{x}(W_{x}\setminus 
            \overline{V_{x}}),
            \end{cases}
$$
where 
$\tilde{h}_{x}:\varphi _{x}(W_{x}\setminus \overline{V_{x}}) 
\rightarrow 
D(0,\frac{1}{2})\setminus 
\overline{D(0,(\frac{1}{2})^{d(x)})}$ is a 
regular covering and 
a $K_{0}$-quasiregular map with dilatation constant 
$K_{0}$ independent of $x\in X.$ 

\noindent Step 4: 
For each $x\in X$, we define a 
Beltrami differential 
$\mu _{x}(z)\frac{d\overline{z}}{dz}$ on 
$\pi ^{-1}(\{ x\} )\cong \CCI $ as follows: 
$$\begin{cases}
   \frac{\partial _{\overline{z}}\tilde{h}_{x}}
        {\partial _{z}\tilde{h}_{x}}
  \frac{d\overline{z}}{dz},\   
  \mbox{ if }z\in \varphi _{x}(W_{x}\setminus 
  \overline{V_{x}}),\\
  (h_{g ^{m-1}(x)}\cdots h_{x})^{\ast }
  (\frac{\partial _{\overline{z}}\tilde{h}_{g ^{m}(x)}}
        {\partial _{z}\tilde{h}_{g ^{m}(x)}}
  \frac{d\overline{z}}{dz}),\   
  \mbox{ if } z\in (h_{g ^{m-1}(x)}\cdots h_{x})^{-1}
   (\varphi _{g^{m}(x)}(W_{g ^{m}(x)}\setminus 
   \overline{V_{g^{m}(x)}})),\\ 
   
  0,\ \ \ \mbox{otherwise.}     
  \end{cases}
$$
Then, there exists a constant $k$ with $0<k<1$ such that 
for each $x\in X,\ \| \mu _{x}\| _{\infty }\leq k.$ 
By the construction, we have 
$h_{x}^{\ast }(\mu _{g (x)}\frac{d\overline{z}}{dz})
=\mu _{x}\frac{d\overline{z}}{dz}$, for each $x\in X.$ 
By the measurable Riemann mapping theorem (\cite[page 194]{LV}), 
for each $x\in X$, 
there exists a quasiconformal map $\psi _{x}:\pi ^{-1}(\{ x\} )\rightarrow 
\pi ^{-1}(\{ x\} )$  
such that 
$\partial _{\overline{z}}\psi _{x}=\mu _{x}\partial _{z}\psi _{x},\ 
\psi _{x}(0)=0,\ \psi _{x}(1)=1, $ and $\psi _{x}(\infty )=\infty $, 
under the canonical identification $\pi ^{-1}(\{ x\} )\cong \CCI .$ 
For each $x\in X$, let 
$\hat{h}_{x}:=\psi _{g (x)}h_{x}\psi _{x}^{-1}
:\pi ^{-1}(\{ x\} )\rightarrow \pi ^{-1}(\{ g (x)\} ).$ 
Then, $\hat{h}_{x}$ is holomorphic on 
$\pi ^{-1}(\{ x\} ).$  By the construction, 
we see that 
$\hat{h}_{x}(z)=c(x)z^{d(x)}$, 
where $c(x)=\psi _{g (x)}h_{x}\psi _{x}^{-1}(1)
=\psi _{g(x)}h_{x}(1).$ 
Moreover, by the construction again, we see 
that there exists a positive constant $C_{4}$ such 
that for each $x\in X,\ \frac{1}{C_{4}}\leq |h_{x}(1)|\leq C_{4}.$ 
Furthermore, 
\cite[Theorem 5.1 in page 73]{LV} implies that 
under the canonical identification 
$\pi ^{-1}(\{ x\} )\cong \CCI $, 
the family $\{ \psi _{x}^{-1}\} _{x\in X}$ is normal in 
$\CCI .$ Therefore, it follows that 
there exists a positive constant $C_{5}$ such that 
for each $x\in X,\ \frac{1}{C_{5}}\leq |c(x)|\leq 
C_{5}.$ 
Let $\tilde{J}_{x}$ be the set of 
non-normality of the sequence 
$\{ \hat{h} _{g ^{m}(x)}\cdots \hat{h}_{x}\} _{m\in \NN }$ 
in $\pi ^{-1}(\{ x\} )\cong \CCI .$ 
Since $\hat{h}_{x}(z)=c(x)z^{d(x)}$ and 
$\frac{1}{C_{5}}\leq |c(x)|\leq C_{5}$ for each 
$x\in X$, we get that for each $x\in X$, 
$\tilde{J}_{x}$ is a round circle. 
Moreover, \cite[Theorem 5.1 in page 73]{LV} implies that 
 $\{ \psi _{x}\} _{x\in X}$ 
and $\{ \psi _{x}^{-1}\} _{x\in X}$ are normal in $\CCI $ 
(under the canonical identification $\pi ^{-1}(\{ x\} )\cong \CCI $). 
Combining it with \cite[Corollary 2.7]{S4},  
we see that for each $x\in X$,   
$J^{x}(f)=\varphi _{x}^{-1}(\psi _{x}^{-1}(\tilde{J}_{x}))$, 
and it follows that 
there exists a constant $K$ such that 
for each $x\in X,\ J_{x}(f)$ is a $K$-quasicircle. 

 Thus, we have proved Theorem~\ref{hypskewqc}.
\end{proof}
\begin{rem}
Theorem~\ref{hypskewqc} generalizes a result 
in \cite[TH\'{E}OR\`{E}ME 5.2]{Se}, 
where O. Sester investigated hyperbolic polynomial skew products 
$f:X\times \CCI \rightarrow X\times \CCI $ 
such that for each $x\in X$, $d(x)=2.$   
\end{rem}
We next need the notion of (fiberwise) external rays.
\begin{df}
Let $h$ be a polynomial with $\deg (h)\geq 2.$ Suppose that 
$J(h)$ is connected. Let $\psi $ be a biholomorphic map 
$\CCI \setminus \overline{D(0,1)}\rightarrow  
F_{\infty }(h)$ with $\psi (\infty )=\infty $ such that 
$\psi ^{-1}\circ h\circ \psi (z)=z^{\deg (h)}$, 
for each $z\in \CCI \setminus \overline{D(0,1)}.$ 
(For the existence of the biholomorphic map $\psi $, see 
\cite[Theorem 9.5]{M}.) For each 
$\theta \in \partial D(0,1)$, we set 
$T(\theta ):= \psi (\{ r\theta \mid 1<r\leq \infty \} ).$ 
This is called the external ray (for $K(h)$) 
with angle $\theta .$ 
\end{df}

\begin{lem}
\label{060416lema}
Let $f:X\times \CCI \rightarrow X\times \CCI $ be a polynomial skew product over 
$g:X\rightarrow X$ such that 
for each $x\in X$, 
$d(x)\geq 2.$ Let $\g \in X$ be a point. 
Suppose that $J_{\g }(f)$ is a Jordan curve. 
Then, for each $n\in \NN $, 
$J_{g^{n}(\g )}(f)$ is a Jordan curve. 
Moreover, for each $n\in \NN $, 
there exists no critical value of 
$f_{\g ,n}$ in $J_{g^{n}(\g )}(f).$ 
\end{lem}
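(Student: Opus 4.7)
The plan is to argue by induction on $n$, the essential step being $n=1$: assuming $J_\alpha(f)$ is a Jordan curve for some $\alpha \in X$, I want to show $J_{g(\alpha)}(f)$ is a Jordan curve and $f_{\alpha,1}$ has no critical point on $J_\alpha(f)$. Iterating at $\alpha=g^k(\gamma)$ then yields the Jordan-curve claim for every $n$. The ``no critical value of $f_{\gamma,n}$'' claim follows by the chain rule combined with the preimage identity $(f_{g^k(\gamma),n-k})^{-1}(J_{g^n(\gamma)}(f))=J_{g^k(\gamma)}(f)$ (iterate Lemma~\ref{fibfundlem}-\ref{fibfundlem1}): a critical point $z$ of $f_{\gamma,n}$ with $f_{\gamma,n}(z)\in J_{g^n(\gamma)}(f)$ would, at some intermediate step $0\le k<n$, produce a critical point $w_k=f_{\gamma,k}(z)$ of $f_{g^k(\gamma),1}$ lying in $J_{g^k(\gamma)}(f)$, contradicting the base case applied at $g^k(\gamma)$.

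For the base step, fix $\alpha$ and set $d:=d(\alpha)\ge 2$. The Jordan curve theorem and Lemma~\ref{fibfundlem}-\ref{fibfundlem4} decompose $\CCI=A_\alpha(f)\sqcup J_\alpha(f)\sqcup U_\alpha$, where $U_\alpha=\mbox{int}(K_\alpha(f))$ and both $A_\alpha(f)$ and $U_\alpha$ are simply connected Jordan domains. From $(f_{\alpha,1})^{-1}(J_{g(\alpha)}(f))=J_\alpha(f)$ one gets $(f_{\alpha,1})^{-1}(A_{g(\alpha)}(f))=A_\alpha(f)$ and $(f_{\alpha,1})^{-1}(\mbox{int}(K_{g(\alpha)}(f)))=U_\alpha$. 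Moreover $\mbox{int}(K_{g(\alpha)}(f))$ is connected: the disjoint open preimages of its components would partition the connected set $U_\alpha$, and surjectivity of $f_{\alpha,1}$ forces each component to have nonempty preimage. Thus $f_{\alpha,1}$ restricts to proper degree-$d$ holomorphic surjections $A_\alpha\to A_{g(\alpha)}$ and $U_\alpha\to \mbox{int}(K_{g(\alpha)}(f))$.

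I then apply Riemann--Hurwitz to each restriction. Let $C_A,C_J,C_U$ denote $\sum(e_p-1)$ over finite critical points of $f_{\alpha,1}$ in $A_\alpha(f),J_\alpha(f),U_\alpha$ respectively, so $C_A+C_J+C_U=d-1$. For the first restriction, accounting for the critical point at $\infty\in A_\alpha$ of local degree $d$,
\[
1 = d\,\chi(A_{g(\alpha)}(f)) - \bigl((d-1)+C_A\bigr),
\]
and integrality together with $0\le C_A\le d-1$ forces $C_A=0$ and $\chi(A_{g(\alpha)}(f))=1$; hence $K_{g(\alpha)}(f)$ is connected, so $\mbox{int}(K_{g(\alpha)}(f))$ has connected complement $\overline{A_{g(\alpha)}(f)}$ and $\chi=1$ as well. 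Riemann--Hurwitz on the second restriction then reads $1=d-C_U$, giving $C_U=d-1$ and $C_J=0$. In particular $f_{\alpha,1}$ has no critical point on $J_\alpha(f)$, which is the no-critical-value conclusion for $n=1$.

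The main obstacle is upgrading ``both complementary components of $J_{g(\alpha)}(f)$ are simply connected'' to ``$J_{g(\alpha)}(f)$ is a Jordan curve''. I would exploit $C_J=0$: the restriction $f_{\alpha,1}\colon J_\alpha(f)\to J_{g(\alpha)}(f)$ is then a local homeomorphism and a continuous surjection from the compact Hausdorff space $J_\alpha(f)\cong S^1$. Being a closed map (continuous from compact to Hausdorff) with discrete fibers, it is a finite-sheeted covering map onto the connected Hausdorff space $J_{g(\alpha)}(f)$. Consequently $J_{g(\alpha)}(f)$ inherits the structure of a compact connected topological $1$-manifold without boundary, hence is homeomorphic to $S^1$ by the classification of $1$-manifolds. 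This gives the Jordan curve property and completes the base case.
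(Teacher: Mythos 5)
Your proof is correct, but the way you establish the Jordan-curve property is genuinely different from the paper's. The two arguments share the opening move (int$(K_{g(\g )}(f))$ is non-empty and connected because its $f_{\g ,1}$-preimage is the connected Jordan domain int$(K_{\g }(f))$), and they share the Riemann--Hurwitz count on the interiors that places all $\deg (f_{\g ,n})-1$ finite critical points inside int$(K_{\g }(f))$, hence off $J_{\g }(f)=(f_{\g ,n})^{-1}(J_{g^{n}(\g )}(f))$; your fiberwise chain-rule version of the critical-value claim is equivalent to the paper's single application of Riemann--Hurwitz to $f_{\g ,n}:\mbox{int}(K_{\g }(f))\rightarrow \mbox{int}(K_{g^{n}(\g )}(f))$. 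The divergence is in the main step: the paper notes that $J_{g(\g )}(f)=f_{\g ,1}(J_{\g }(f))$ is locally connected, that $\partial (\mbox{int}(K_{g(\g )}(f)))=\partial (A_{g(\g )}(f))=J_{g(\g )}(f)$ by Lemma~\ref{fibfundlem}-\ref{fibfundlem4} and \ref{fibfundlema}, and then invokes the Jordan-curve criterion of \cite[Lemma 5.1]{PT}; you instead extract $C_{J}=0$ first and run a covering-space argument, so that $f_{\alpha ,1}:J_{\alpha }(f)\rightarrow J_{g(\alpha )}(f)$ is a finite covering of a connected Hausdorff space by $S^{1}$ (the local-homeomorphism property onto the target does require the full-preimage identity $(f_{\alpha ,1})^{-1}(J_{g(\alpha )}(f))=J_{\alpha }(f)$, which you have), whence $J_{g(\alpha )}(f)$ is a compact connected $1$-manifold. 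Your route avoids the external reference to Pilgrim--Tan and the local-connectivity detour, at the price of doing the critical-point bookkeeping before rather than after the Jordan-curve conclusion. One point to make explicit: before reading $\chi (A_{g(\alpha )}(f))=1$ off the Riemann--Hurwitz identity you should check that this Euler characteristic is a finite integer; the quickest fix is to observe that $\CCI \setminus A_{g(\alpha )}(f)=f_{\alpha ,1}(K_{\alpha }(f))$ is the continuous image of the connected compact set $\overline{\mbox{int}(K_{\alpha }(f))}$, so $A_{g(\alpha )}(f)$ is simply connected outright (and similarly int$(K_{g(\alpha )}(f))$ has connected complement $\overline{A_{g(\alpha )}(f)}$), which streamlines both counts.
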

\begin{proof}
Since 
$(f_{\g ,1})^{-1}(K_{g(\g )}(f))=K_{\g }(f)$, 
it follows that int$(K_{g(\g )}(f))$ is a non-empty 
connected set. 
Moreover, 
$J_{g(\g )}(f)=f_{\g ,1}(J_{\g }(f))$ is locally connected. 
Furthermore, by Lemma~\ref{fibfundlem}-\ref{fibfundlem4} and 
Lemma~\ref{fibfundlem}-\ref{fibfundlema}, 
$\partial (\mbox{int}(K_{g(\g )}(f)))=\partial (A_{g(\g )}(f))= J_{g(\g )}(f).$ 
Combining the above arguments and \cite[Lemma 5.1]{PT}, 
we get that $J_{g(\g )}(f)$ is a Jordan curve. 
Inductively, 
we conclude that for each $n\in \NN $, 
$J_{g^{n}(\g )}(f)$ is a Jordan curve. 

 Furthermore, applying the Riemann-Hurwitz formula 
 to the map 
 $f_{\g, n}: \mbox{int}(K_{\g }(f))\rightarrow 
 \mbox{int}(K_{g^{n}(\g )}(f))$, we obtain 
 $1+p=\deg (f_{\g ,n})$, where $p$ 
 denotes the cardinality of the critical 
 points of 
 $f_{\g, n}: \mbox{int}(K_{\g }(f))\rightarrow 
 \mbox{int}(K_{g^{n}(\g )}(f))$ counting multiplicities. 
Hence, $p=\deg (f_{\g ,n})-1.$ 
It implies that there exists no critical value of 
$f_{\g ,n}$ in 
$J_{g^{n}(\g )}(f).$     
\end{proof}
The following is the key lemma to prove the main results. 
\begin{lem}
\label{060416lemast}
Let $f:X\times \CCI \rightarrow X\times \CCI  $ be a polynomial skew product over 
$g:X\rightarrow X$ such that 
for each $x\in X$, $d(x)\geq 2.$ 
Let $\mu >0$ be a number. 
Then, there exists a number $\delta >0$ such that 
the following statement holds.
\begin{itemize}
\item 
Let $\omega \in X$ be any element and 
$p\in J_{\omega }(f)$ any point with 
$\min \{ |p-b|\mid (\omega, b)\in P(f), b\in \CC \} >\mu .$  
Suppose that $J_{\omega }(f)$ is connected. 
Let 
$\psi :\CCI \setminus \overline{D(0,1)} 
\rightarrow A_{\omega }(f)$ be a biholomorphic map 
with $\psi (\infty )=\infty .$ For each 
$\theta \in \partial D(0,1)$, 
let $T(\theta )=\psi (\{ r\theta \mid 
1<r\leq \infty \} ).$ 
Suppose that there exist two elements 
$\theta _{1},\theta _{2}\in \partial D(0,1)$ with $\theta _{1}\neq \theta _{2}$ such that, for each $i=1,2$,  
$T(\theta _{i})$ lands at $p.$ 
Moreover, suppose that 
a connected component $V$ of 
$\CCI \setminus 
(T(\theta _{1})\cup T(\theta _{2})\cup \{ p\} )$ 
satisfies that 
diam $(V\cap K_{\omega }(f))\leq \delta .$ 
Furthermore, let 
$\g \in X$ be any element and suppose that 
there exists a sequence $\{ n_{k}\} _{k\in \NN }$ 
of positive integers such that 
$g^{n_{k}}(\g )\rightarrow \omega $ as 
$k\rightarrow \infty .$ Then, 
$J_{\g }(f)$ is not a quasicircle. 
\end{itemize}
\end{lem}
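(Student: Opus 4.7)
The plan is to argue by contradiction: suppose $J_\gamma(f)$ is a $K$-quasicircle for some $K\geq 1$, and derive a contradiction provided $\delta$ is chosen small enough depending only on $\mu$ and $f$. The contradiction will arise from transporting the quasi-symmetry of $\psi_\gamma|_{\partial D(0,1)}$ forward along the orbit $g^{n_k}(\gamma)$, passing to the limit fiber $\omega$, and observing that the pinch hypothesis at $\omega$ is incompatible with the resulting uniform distortion bound.

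First I set up the forward dynamics. Since $J_\gamma(f)$ is a Jordan curve, Lemma~\ref{060416lema} gives that $J_{g^n(\gamma)}(f)$ is a Jordan curve for every $n$ and that $f_{\gamma,n}$ has no critical value in $J_{g^n(\gamma)}(f)$. Hence $f_{\gamma,n}$ is a local biholomorphism near $J_\gamma(f)$, and the proper map $f_{\gamma,n}\colon A_\gamma(f)\to A_{g^n(\gamma)}(f)$ is a degree-$d_n$ branched covering branched only over $\infty$. Normalizing the Riemann maps $\psi_\cdot\colon\CCI\setminus\overline{D(0,1)}\to A_\cdot(f)$ appropriately, this yields the conjugation $f_{\gamma,n}\circ\psi_\gamma(z)=\psi_{g^n(\gamma)}(z^{d_n})$.

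Next I transfer the quasi-symmetry. Because $J_\gamma(f)$ is a $K$-quasicircle, $\psi_\gamma$ extends to an $\eta$-quasi-symmetric homeomorphism of $\partial D(0,1)$ onto $J_\gamma(f)$, with $\eta$ depending only on $K$. I will use the factorization $\psi_{g^{n_k}(\gamma)}(w)=f_{\gamma,n_k}(\psi_\gamma(w^{1/d_{n_k}}))$ together with Lemma~\ref{invnormal} applied on the disc $D(p,\mu/2)$ (which avoids $\pi_\CCI(P(f))$ by hypothesis) to obtain Koebe-type distortion bounds for the local inverse branches of $f_{\gamma,n_k}$ near $p$ that are uniform in $k$. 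These bounds turn the $\eta$-quasi-symmetry of $\psi_\gamma$ into a uniform local QS bound for the boundary extension of $\psi_{g^{n_k}(\gamma)}$ on the arcs of $\partial D(0,1)$ mapping into $D(p,\mu/4)\cap J_{g^{n_k}(\gamma)}(f)$.

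Finally I pass to $\omega$ and close the argument. From $g^{n_k}(\gamma)\to\omega$, the lower semicontinuity of $x\mapsto J_x(f)$ (Lemma~\ref{fibfundlem}-\ref{fibfundlem2}), and the uniform QS bounds of the previous paragraph, an Ascoli--Arzel\`{a} argument extracts a subsequence along which $\psi_{g^{n_k}(\gamma)}$ converges near $e^{i\theta_1}$ and $e^{i\theta_2}$ to boundary maps that must coincide with $\psi_\omega$ by uniqueness of the Riemann map, and the limit retains the same QS modulus. Thus for any sequences $\xi_k^{(i)}\to e^{i\theta_i}$ on $\partial D(0,1)$ we obtain $|\psi_{g^{n_k}(\gamma)}(\xi_k^{(1)})-\psi_{g^{n_k}(\gamma)}(\xi_k^{(2)})|\geq c(\eta,|\theta_1-\theta_2|)>0$. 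On the other hand, the convergence of the external rays $T_{g^{n_k}(\gamma)}(\theta_i)\to T_\omega(\theta_i)$ on compact subsets of $A_\omega(f)$, combined with the hypothesis $\mathrm{diam}(V\cap K_\omega(f))\leq\delta$, forces the left-hand side to be at most $O(\delta)$ for large $k$; choosing $\delta$ smaller than $c$ yields the contradiction. The main obstacle is the uniform QS bound for $\psi_{g^{n_k}(\gamma)}$ independent of the growing degree $d_{n_k}$; the key observation is that the degree enters only through the $1$-QS map $w\mapsto w^{1/d_{n_k}}$ on $\partial D(0,1)$, so the remaining compounding of distortion is solely through the Koebe estimates for the inverse branches of $f_{\gamma,n_k}$ on $D(p,\mu/2)$, and those estimates are uniform by Lemma~\ref{invnormal}.
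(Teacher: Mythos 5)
There is a genuine gap, and it sits exactly where the lemma is hardest. Your transfer of quasisymmetry is only local: the normality/Koebe estimates of Lemma~\ref{invnormal} control inverse branches of $f_{\gamma ,n_{k}}$ only on the disk $D(p,\mu /2)$, so at best they give a uniform QS modulus for the boundary extension of $\psi _{g^{n_{k}}(\gamma )}$ on arcs of $\partial D(0,1)$ whose images \emph{already lie in} $D(p,\mu /2)$. From this you cannot deduce the absolute lower bound $|\psi _{g^{n_{k}}(\gamma )}(\xi _{k}^{(1)})-\psi _{g^{n_{k}}(\gamma )}(\xi _{k}^{(2)})|\geq c>0$: quasisymmetry controls ratios, so any such bound must be read off against the diameter of the image of an arc joining $e^{i\theta _{1}}$ to $e^{i\theta _{2}}$, and the only arc you control is the one whose image is being pinched into a set of diameter $O(\delta )$ near $p$ --- so the bound degenerates instead of staying bounded below. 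A global QS bound for $\psi _{g^{n_{k}}(\gamma )}$ is precisely what fails as $k\to \infty $ (that failure is the pinch), and it cannot be inherited from $J_{\gamma }(f)$ through the degree-$d_{n_{k}}$ covering outside the controlled disk. A second gap is the claim that the limit boundary maps ``must coincide with $\psi _{\omega }$'': Carath\'{e}odory kernel convergence gives only locally uniform convergence on the open exterior disk, and boundary equicontinuity near $e^{i\theta _{i}}$ requires knowing in advance that the images of small arcs around $e^{i\theta _{i}}$ stay near $p$ --- the very localization you are trying to establish (note the lemma does not assume semi-hyperbolicity, so $x\mapsto J_{x}(f)$ is only lower semicontinuous and no Hausdorff convergence of Julia sets is available). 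Finally, your argument never uses $\mathrm{diam}(V\cap K_{\omega }(f))\leq \delta $ in an essential quantitative way ($\delta $ enters only through an ``$O(\delta )$'' that is superseded by your claim that both limits equal $p$); this is a warning sign, since the smallness of $\delta $ relative to $\mu $ and to $\inf _{x}\mathrm{diam}\,J_{x}(f)$ is what the statement actually turns on.

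For contrast, the paper argues entirely downstairs and avoids Riemann maps. Assuming $J_{\gamma }(f)$ is a quasicircle, it produces an arc $\xi _{k}\subset J_{g^{n_{k}}(\gamma )}(f)\cap \overline{V}$ whose diameter is bounded below (it joins a neighborhood of $p$ to a neighborhood of a point $a\in V\cap J_{\omega }(f)$, using lower semicontinuity and Milnor's Lemma 17.5) and bounded above by $\delta _{0}/2$ (this is where $\mathrm{diam}(V\cap K_{\omega }(f))\leq \delta $ enters), while its two endpoints on $T(\theta _{1})\cup T(\theta _{2})\cup \{ p\} $ both converge to $p$. Pulling $\xi _{k}$ back by a univalent branch of $(f_{\gamma ,n_{k}})^{-1}$ on $D(p,\mu )$ --- univalent with bounded distortion precisely because $\xi _{k}$ is small and stays near $p$ --- yields a subarc $\lambda _{k}$ of $J_{\gamma }(f)$ that is the shorter of the two complementary arcs between its endpoints (again because $\delta $ is small compared with $\inf _{x}\mathrm{diam}\,J_{x}(f)$) and satisfies $\mathrm{diam}\,\lambda _{k}/|z_{k,1}-z_{k,2}|\to \infty $, violating the three-point (bounded turning) characterization of quasicircles. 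If you want to salvage your approach, the cleanest route is to drop the Riemann-map formalism and run this bounded-turning argument directly.
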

\begin{proof}
Let $\mu >0.$ Let $R>0$ with 
$\pi _{\CCI }(\tilde{J}(f))\subset D(0,R).$ 
Combining Lemma~\ref{invnormal} and 
Lemma~\ref{fibfundlem}-\ref{fibfundlemast}, 
we see that there exists a $\delta _{0}>0$ with   
$0<\delta _{0}<\frac{1}{20}\min 
\{ \inf _{x\in X}\mbox{diam }J_{x}(f),\mu \} $ such that the 
following statement holds:
\begin{itemize}
\item 
Let $x\in X$ be any point and $n\in \NN $ any element. 
Let 
$p\in D(0,R)$ be any point with 
$\min \{ |p-b|\mid (g^{n}(x), b)\in P(f), b\in \CC \} >\mu .$ 
Let $\phi :D(p,\mu )\rightarrow \CC $ be any well-defined 
branch of 
$(f_{x,n})^{-1}$ on $D(p,\mu ).$ Let 
$A$ be any subset of $D(p,\frac{\mu }{2})$ with diam $A\leq \delta _{0}.$ 
Then, 
\begin{equation}
\label{060416lemastpfeq1}
\mbox{diam }\phi (A)\leq \frac{1}{10}\inf 
_{x\in X}\mbox{diam }J_{x}(f). 
\end{equation}
\end{itemize}
We set $\delta :=\frac{1}{10}\delta _{0}.$ 
Let $\omega \in X$ and $p\in J_{\omega }(f)$ with 
$\min \{ |p-b|\mid (\omega , b)\in P(f), b\in \CC \} >\mu .$ 
Suppose that $J_{\omega }(f)$ is connected and 
let $\psi :\CCI \setminus \overline{D(0,1)}\rightarrow 
A_{\omega }(f)$ be a biholomorphic map with 
$\psi (\infty )=\infty .$ 
Setting $T(\theta ):=\psi (\{ r\theta 
\mid 1<r\leq \infty \} )$ for each 
$\theta \in \partial D(0,1)$, 
suppose that there exist two elements 
$\theta _{1},\theta _{2}\in \partial D(0,1)$ with $\theta _{1}\neq \theta _{2}$  such that 
for each $i=1,2$, 
$T(\theta _{i})$ lands at $p.$ Moreover, 
suppose that a connected component 
$V$ of 
$\CCI \setminus (T(\theta _{1})\cup T(\theta _{2})\cup 
\{ p\} )$ satisfies that 
\begin{equation}
\label{060416lemastpfeq2}
\mbox{diam}(V\cap K_{\omega }(f))\leq \delta .
\end{equation}
Furthermore, let $\g \in X$ and suppose that 
there exists a sequence $\{ n_{k}\} _{k\in \NN }$ of 
positive integers such that 
$g^{n_{k}}(\g )\rightarrow \omega $ as 
$k\rightarrow \infty .$ 
 We now suppose that $J_{\g }(f)$ is a quasicircle, 
and we will deduce a contradiction. 
Since $g^{n_{k}}(\g )\rightarrow \omega $ as 
$k\rightarrow \infty $, 
we obtain 
\begin{equation}
\label{060416lemastpfeq3}
\max \{ d_{e}(b,K_{\omega }(f))\mid b\in J_{g^{n_{k}}(\g )}(f)\} 
\rightarrow 0 \mbox{ as } k\rightarrow \infty .
\end{equation}  
We take a point $a\in V\cap J_{\omega }(f)$ and fix it. 
By Lemma~\ref{fibfundlem}-\ref{fibfundlem2}, 
there exists a number $k_{0}\in \NN $ such that 
for each $k\geq k_{0}$, 
there exists a point $y_{k}$ satisfying that 
\begin{equation}
\label{060416lemastpfeq4}
y_{k}\in J_{g^{n_{k}}(\g )}(f)\cap D(a,\frac{|a-p|}{10k}).
\end{equation}
Let $V'$ be the connected component 
of $\CCI \setminus (T(\theta _{1})\cup T(\theta _{2})\cup 
\{ p\} )$ with $V'\neq V.$ 
Then, by \cite[Lemma 17.5]{M}, 
\begin{equation}
\label{060416lemastpfeq5}
V'\cap J_{\omega }(f)\neq \emptyset .
\end{equation}
Combining (\ref{060416lemastpfeq5}) and 
Lemma~\ref{fibfundlem}-\ref{fibfundlem2}, 
we see that there exists a $k_{1}(\geq k_{0})\in \NN $
 such that for each $k\geq k_{1}$, 
\begin{equation}
\label{060416lemastpfeq6}
V'\cap J_{g^{n_{k}}(\g )}(f)\neq \emptyset .
\end{equation}  
By assumption and Lemma~\ref{060416lema}, 
for each $k\geq k_{1}$, 
$J_{g^{n_{k}}(\g )}(f)$ is a Jordan curve. 
Combining it with (\ref{060416lemastpfeq4}) and 
(\ref{060416lemastpfeq6}), there exists a 
$k_{2}(\geq k_{1})\in \NN $ satisfying that 
for each $k\geq k_{2}$,  
there exists a smallest closed subarc 
$\xi _{k}$ of 
$J_{g^{n_{k}}(\g )}(f)\cong S^{1}$ such that 
$y_{k}\in \xi _{k}$, $\xi _{k}\subset \overline{V}, $ 
$\sharp (\xi _{k}\cap (T(\theta _{1})\cup T(\theta _{2})\cup 
\{ p\} ))=2,$ and such that 
$\xi _{k}\neq J_{g^{n_{k}}(\g )}(f).$ 
For each $k\geq k_{2}$, let 
$y_{k,1}$ and $y_{k,2}$ be the two points such that 
$\{ y_{k,1},y_{k,2}\} =
\xi _{k}\cap 
(T(\theta _{1})\cup T(\theta _{2})\cup \{ p\} ).$ 
Then, (\ref{060416lemastpfeq3}) implies that 
\begin{equation}
\label{060416lemastpfeq7}
y_{k,i}\rightarrow p \mbox{ as }k\rightarrow \infty , \mbox{ for each }i=1,2.
\end{equation}
Combining that 
$\xi _{k}\subset V\cup \{ y_{k,1},y_{k,2}\} $, 
(\ref{060416lemastpfeq3}), and 
(\ref{060416lemastpfeq2}), we get that 
there exists a $k_{3}(\geq k_{2})\in \NN $ such that 
for each $k\geq k_{3}$, 
\begin{equation}
\label{060416lemastpfeq8}
\mbox{diam }\xi _{k}\leq \frac{\delta _{0}}{2}.
\end{equation}
Moreover, combining (\ref{060416lemastpfeq4}) and 
(\ref{060416lemastpfeq7}), 
we see that there exists a constant 
$C>0$ such that for each $k\in \NN $ with $k\geq k_{3}$, 
\begin{equation}
\label{060416lemastpfeq9}
\mbox{diam }\xi _{k}>C.
\end{equation}
Combining (\ref{060416lemastpfeq7}), (\ref{060416lemastpfeq8}), 
and (\ref{060416lemastpfeq9}), 
we may assume that there exists a constant $C>0$ such that 
for each $k\in \NN $, 
\begin{equation}
\label{060416lemastpfeq10}
C<\mbox{ diam }\xi _{k}\leq \frac{\delta _{0}}{2}
\mbox{ and } 
\xi _{k}\subset D(p,\delta _{0}).
\end{equation}
By Lemma~\ref{060416lema}, each connected component $v$ of 
$(f_{\g ,n_{k}})^{-1}(\xi _{k})$ is a subarc of 
$J_{\g }(f)\cong S^{1}$ and 
$f_{\g ,n_{k}}:v\rightarrow \xi _{k}$ is a homeomorphism.
For each $k\in \NN $, 
let $\lambda _{k}$ be a connected component of 
$(f_{\g ,n_{k}})^{-1}(\xi _{k})$, and let 
$z_{k,1},z_{k,2}\in \lambda _{k}$ be the two endpoints 
of $\lambda _{k}$ such that 
$f_{\g ,n_{k}}(z_{k,1})=y_{k,1}$ and 
$f_{\g ,n_{k}}(z_{k,2})=y_{k,2}.$ Then, combining  
(\ref{060416lemastpfeq1}) and (\ref{060416lemastpfeq10}), 
we obtain 
\begin{equation}
\label{060416lemastpfeq11}
\mbox{ diam} \lambda _{k}<\mbox{ diam }(J_{\g }(f)\setminus 
\lambda _{k}), \mbox{ for each large }k\in \NN .
\end{equation}
Moreover, combining (\ref{060416lemastpfeq7}), 
(\ref{060416lemastpfeq10}), and Koebe distortion theorem, 
it follows that 
\begin{equation}
\label{060416lemastpfeq12}
\frac{\mbox{diam }\lambda _{k}}{|z_{k,1}-z_{k,2}|}\rightarrow \infty 
\mbox{ as }k\rightarrow \infty .
\end{equation}
Combining (\ref{060416lemastpfeq11}) and (\ref{060416lemastpfeq12}), 
we conclude that $J_{\g }(f)$ cannot be a quasicircle, 
since we have the following well-known fact:\\ 
Fact (\cite[Chapter 2]{LV}): 
Let $\xi $ be a Jordan curve in $\CC .$ Then, 
$\xi $ is a quasicircle if and only if 
there exists a constant $K>0 $ such that 
for each $z_{1},z_{2}\in \xi $ with $z_{1}\neq z_{2}$, 
we have $\frac{\mbox{diam }\lambda (z_{1},z_{2})}{|z_{1}-z_{2}|}\leq K$, 
where $\lambda (z_{1},z_{2})$ denotes the smallest closed subarc 
of $\xi $ such that 
$z_{1},z_{2}\in \lambda (z_{1},z_{2})$ and such that 
diam $\lambda (z_{1},z_{2})<$ 
diam $(\xi \setminus \lambda (z_{1},z_{2})).$ 

 Hence, we have proved Lemma~\ref{060416lemast}. 
\end{proof}
We now give some sufficient conditions for a fiberwise Julia set to be a Jordan curve. 
\begin{prop}
\label{shonecomp}
Let $f:X\times \CCI \rightarrow X\times \CCI $ be a semi-hyperbolic 
polynomial skew product 
over $g:X\rightarrow X.$ 
 Suppose that for each $x\in X$, $d(x)\geq 2$, and that 
$\pc (P(f))\cap \CC $ is bounded in $\CC .$   
Let $\omega \in X$ be a point. 
If {\em int}$(K_{\omega }(f))$ is 
a non-empty connected set, then 
$J_{\omega }(f)$ is a Jordan curve.  
\end{prop}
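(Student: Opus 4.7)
The plan is to verify the three hypotheses of the Pilgrim--Tan Lei Jordan curve criterion \cite[Lemma 5.1]{PT} (the same tool invoked in the proof of Lemma~\ref{060416lema}): that $\mbox{int}(K_{\omega }(f))$ is a simply connected domain, that $A_{\omega }(f)$ is a simply connected domain, and that their common boundary $J_{\omega }(f)$ is locally connected. Once these are in hand, I will conclude that $J_{\omega }(f)$ is a Jordan curve.

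First I would use the hypothesis that $\pi_{\CCI}(P(f))\cap \CC$ is bounded together with Lemma~\ref{fibconnlem} to deduce that $J_{x}(f)$ is connected for every $x\in X$; Lemma~\ref{fibfundlem}-\ref{fibfundlem4} then shows that $A_{\omega }(f)$ is a simply connected domain with $\partial A_{\omega }(f)=J_{\omega }(f)$. Next, $\mbox{int}(K_{\omega }(f))$ is non-empty and connected by assumption, and its complement $\CCI \setminus \mbox{int}(K_{\omega }(f))=\overline{A_{\omega }(f)}$ is connected (as the closure of a connected set), so $\mbox{int}(K_{\omega }(f))$ is itself simply connected. Lemma~\ref{fibfundlem}-\ref{fibfundlema} gives $\partial(\mbox{int}(K_{\omega }(f)))=J_{\omega }(f)$, so the two complementary domains share the same boundary curve.

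The remaining, and most substantial, input is local connectivity of $J_{\omega }(f)$. This is where the semi-hyperbolicity assumption is essential: by the fiberwise John-domain theorem of \cite{S4} (already invoked in the introduction), semi-hyperbolicity of $f$ together with connectedness of $J_{\omega }(f)$ forces $A_{\omega }(f)$ to be a John domain, and the boundary of a John domain is automatically locally connected. With all three ingredients assembled, \cite[Lemma 5.1]{PT} applies and yields that $J_{\omega }(f)$ is a Jordan curve. The principal obstacle is really bundled into this black-box appeal to the Johnness result from \cite{S4}; once that is granted, the remainder is a direct assembly of properties already recorded in Section~\ref{Tools}.
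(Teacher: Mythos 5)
Your proposal is correct and follows essentially the same route as the paper's proof: both deduce Johnness of $A_{\omega }(f)$ from semi-hyperbolicity via \cite[Theorem 1.12]{S4} together with Lemma~\ref{fibconnlem}, obtain local connectivity of $J_{\omega }(f)$ from \cite{NV}, identify $J_{\omega }(f)$ as the common boundary of the two complementary domains via Lemma~\ref{fibfundlem}, and conclude with \cite[Lemma 5.1]{PT}. The only cosmetic difference is that you explicitly verify simple connectivity of int$(K_{\omega }(f))$ through the connectedness of its complement, where the paper simply observes that $\CCI \setminus J_{\omega }(f)$ has exactly two components.
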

\begin{proof}
By \cite[Theorem 1.12]{S4} and Lemma~\ref{fibconnlem}, we get that the 
unbounded component $A_{\omega }(f)$ of 
$F_{\omega }(f)$ is a John domain. Combining it,  
that $A_{\omega }(f)$ is simply connected (cf. Lemma~\ref{fibconnlem}), 
and \cite[page 26]{NV}, we see that 
$J_{\omega }(f)
=\partial (A_{\omega }(f))$ 
(cf. Lemma \ref{fibfundlem}) is locally connected. 
Moreover, by Lemma~\ref{fibfundlem}-\ref{fibfundlema}, 
we have 
$\partial (\mbox{int}(K_{\omega }(f)))=J_{\omega }(f).$ 
Hence, we see that $\CCI \setminus J_{\omega }(f)$ has 
exactly two connected components $A_{\g }(f)$ and int$(K_{\omega }(f))$, 
and that $J_{\omega }(f)$ is locally connected. From 
\cite[Lemma 5.1]{PT}, it follows that 
$J_{\g }(f)$ is a Jordan curve. Thus, we have proved 
Proposition~\ref{shonecomp}.
%
\end{proof}

\begin{lem}
\label{mainthranlem1}
Let $\G $ be a compact set in {\em Poly}$_{\deg \geq 2}.$ 
Let $f:\GN \times \CCI \rightarrow \GN \times \CCI $ be the  
skew product associated with the family $\G .$ 
Let $G$ be the polynomial semigroup generated by $\G .$ 
Suppose that $G\in {\cal G}$ and that $G$ is semi-hyperbolic. 
Moreover, suppose that there exist two elements 
$\alpha ,\beta \in \GN $ such that 
$J_{\beta }(f)<J_{\alpha }(f).$ 
Let $\g \in \GN $ and suppose that there exists a strictly increasing sequence 
$\{ n_{k}\} _{k\in \NN }$ of positive integers such that 
$\sigma ^{n_{k}}(\g )\rightarrow \alpha $ as $k\rightarrow \infty .$ 
Then, $J_{\g }(f)$ is a Jordan curve.  
\end{lem}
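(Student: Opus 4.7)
The plan is to apply Proposition~\ref{shonecomp} with $\omega =\g $, so it suffices to show that $\mathrm{int}(K_{\g }(f))$ is non-empty and connected; this then yields that $J_{\g }(f)$ is a Jordan curve. Non-emptyness is straightforward: since $J_{\beta }(f)<J_{\alpha }(f)$, the set $K_{\beta }(f)$, which is compact and connected (Lemma~\ref{fibfundlem}-\ref{fibfundlem4}, Lemma~\ref{fibconnlem2}), bounded in $\CC $, and contains $P^{\ast }(G)\subset \hat K(G)\subset K_{\beta }(f)$, is disjoint from $J_{\alpha }(f)$ and so lies in a single bounded simply connected component $W$ of $\CC \setminus J_{\alpha }(f)$ at positive distance from $J_{\alpha }(f)$. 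By the Hausdorff continuity of $x\mapsto J_{x}(f)$ for semi-hyperbolic $G$ (see \cite{S1}), $J_{\sigma ^{n_{k}}(\g )}(f)\to J_{\alpha }(f)$; hence for all large $k$, $K_{\beta }(f)$ is disjoint from $J_{\sigma ^{n_{k}}(\g )}(f)$ and sits in a unique bounded (simply connected) component $W_{k}$ of $\CC \setminus J_{\sigma ^{n_{k}}(\g )}(f)$, a component of $\mathrm{int}(K_{\sigma ^{n_{k}}(\g )}(f))$ containing $P^{\ast }(G)$. By Lemma~\ref{fibfundlem}-\ref{fibfundlem1}, $\mathrm{int}(K_{\g }(f))=(f_{\g ,n_{k}})^{-1}(\mathrm{int}(K_{\sigma ^{n_{k}}(\g )}(f)))$, which contains the non-empty open set $(f_{\g ,n_{k}})^{-1}(W_{k})$.

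For connectedness I would use a Riemann--Hurwitz count. Any component $V$ of $\mathrm{int}(K_{\g }(f))$ is simply connected: $\CCI \setminus V=\overline{A_{\g }(f)}\cup \bigsqcup _{V'\neq V}V'$, and since each other component $V'$ has $\partial V'\subset J_{\g }(f)\subset \overline{A_{\g }(f)}$, a neighborhood argument shows the union is connected. All critical values of $f_{\g ,n_{k}}|_{\CC }$ lie in $P^{\ast }(G)\subset W_{k}$, so Riemann--Hurwitz applied to a simply-connected cover $V\to W_{k}$ (both with $\chi =1$) gives $r_{V}=d_{V}-1$; summing over the components $V$ mapping to $W_{k}$ and using that the total ramification of $f_{\g ,n_{k}}$ in $\CC $ equals $d_{n_{k}}-1=\sum (d_{V}-1)$ (where $d_{n_{k}}:=\deg f_{\g ,n_{k}}=\sum d_{V}$) forces exactly one such $V$, covering $W_{k}$ with full degree. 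Any other component $U$ of $\mathrm{int}(K_{\sigma ^{n_{k}}(\g )}(f))$ contains no critical value, so $(f_{\g ,n_{k}})^{-1}(U)$ is a disjoint union of $d_{n_{k}}$ biholomorphic copies. Letting $N$ and $M_{k}$ denote the number of components of $\mathrm{int}(K_{\g }(f))$ and $\mathrm{int}(K_{\sigma ^{n_{k}}(\g )}(f))$, this yields $N=1+(M_{k}-1)d_{n_{k}}$. Since $N$ is independent of $k$ while $d_{n_{k}}\to \infty $, either $N=1$ (with $M_{k}=1$ eventually) or $N=\infty $ (with $M_{k}=\infty $ for every $k$).

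The main obstacle is to rule out $N=\infty $. Suppose it holds; then there is a ``satellite'' component $V_{\mathrm{sat}}$ of $\mathrm{int}(K_{\g }(f))$ that is mapped biholomorphically by $f_{\g ,n_{k}}$ onto a satellite component $U_{k}$ of $\mathrm{int}(K_{\sigma ^{n_{k}}(\g )}(f))$, with $\partial U_{k}\subset J(G)$. I would invoke the uniform exponential contraction of inverse branches for semi-hyperbolic polynomial semigroups -- a standard consequence of semi-hyperbolicity and the boundedness of $P^{\ast }(G)$ (cf.\ \cite{S1,S4}): there exist $\delta ,C>0$ and $0<\lambda <1$ such that every well-defined inverse branch of $f_{\g ,n_{k}}$ on a disk $B(y,\delta )$ with $y\in J(G)$ has diameter at most $C\lambda ^{n_{k}}$. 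Covering the compact set $\partial U_{k}\subset J(G)\cap \overline{D(0,R)}$ by finitely many such disks (uniformly in $k$) and noting that $\partial V_{\mathrm{sat}}$ is the corresponding inverse branch image of $\partial U_{k}$ yields $\mathrm{diam}(V_{\mathrm{sat}})\leq C'\lambda ^{n_{k}}\to 0$, contradicting the fact that $V_{\mathrm{sat}}$ is a fixed non-empty open set. Hence $N=1$, and Proposition~\ref{shonecomp} gives that $J_{\g }(f)$ is a Jordan curve.
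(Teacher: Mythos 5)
Your non-emptiness argument and the Riemann--Hurwitz dichotomy ($N=1$ or $N=\infty $) are correct, and the counting step is a reasonable alternative to part of what the paper does. The genuine gap is in the final step, where you rule out $N=\infty $. The bound $\mathrm{diam}(V_{\mathrm{sat}})\leq C'\lambda ^{n_{k}}$ does not follow from covering $\partial U_{k}$ by finitely many disks $B(y_{i},\delta )$, $y_{i}\in J(G)$: semi-hyperbolicity controls the diameter of each \emph{connected component} of $(f_{\g ,n_{k}})^{-1}(B(y_{i},\delta ))$ (say by $\epsilon _{n_{k}}\rightarrow 0$), but each such preimage has up to $\deg (f_{\g ,n_{k}})$ components, and the connected set $\partial V_{\mathrm{sat}}$ may a priori chain through many of them, so the only bound you get is of the order $m\cdot \deg (f_{\g ,n_{k}})\cdot \epsilon _{n_{k}}$, which is useless since the degree grows. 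The argument would close only if you first knew $\mathrm{diam}(\partial U_{k})<\delta $ for some large $k$ (then $\partial V_{\mathrm{sat}}$, being connected and contained in $(f_{\g ,n_{k}})^{-1}(B(q,\delta ))$ for a single $q\in \partial U_{k}$, would lie in one small component), but you have no control on $\mathrm{diam}(U_{k})$: $U_{k}$ is the image of all of $V_{\mathrm{sat}}$, not of a compact subset of it, so the forward-contraction results of \cite{S1,S4} do not apply, and since $G$ is only semi-hyperbolic, $\overline{U_{k}}$ need not stay at a definite distance from $P^{\ast }(G)$, so a Koebe-type estimate is unavailable as well. (Also, ``well-defined inverse branch'' is not the right notion here: for semi-hyperbolic, non-hyperbolic $G$ the branches need not be single-valued; the correct statements concern components of preimages.)

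The ingredient you are missing is the one the paper actually uses: for a semi-hyperbolic skew product with $G\in {\cal G}$, forward orbits of points of bounded Fatou components converge to $P^{\ast }(G)$ (this is \cite[Theorem 2.14-(5)]{S1} combined with Lemma~\ref{constlimlem}). Hence for \emph{any} two components $U_{1},U_{2}$ of $\mathrm{int}(K_{\g }(f))$ and any $y_{i}\in U_{i}$, one has $f_{\g ,n_{k}}(y_{i})\in D(P^{\ast }(G),\epsilon )\subset W_{k}$ for all large $k$, so both components map onto the distinguished component $W_{k}$; your own Riemann--Hurwitz count (exactly one component lies over $W_{k}$) then yields the contradiction directly, with no satellite components to discuss. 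The paper concludes slightly differently, observing that $(f_{\g ,n_{k}})^{-1}(J_{\beta }(f))=J_{\omega ^{k}}(f)$ with $\omega ^{k}=(\g _{1},\ldots ,\g _{n_{k}},\beta _{1},\beta _{2},\ldots )$ is connected because $G\in {\cal G}$ (Lemma~\ref{fibconnlem}), yet would meet both $U_{1}$ and $U_{2}$; either ending works once the forward-orbit fact is in place.
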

\begin{proof}
Since $G$ is semi-hyperbolic, \cite[Theorem 2.14-(4)]{S1} implies that 
\begin{equation}
\label{mainthranlem1eq1}
J_{\sigma ^{n_{k}}(\g )}(f)\rightarrow J_{\alpha }(f) 
\mbox{ as } k\rightarrow \infty, 
\end{equation}
with respect to the Hausdorff topology in the space of non-empty 
compact subsets of $\CCI .$ 
Combining it with Lemma~\ref{fiborder}, 
we see that there exists a number $k_{0}\in \NN $ such that 
for each $k\geq k_{0}$, 
\begin{equation}
\label{mainthranlem1eq2}
J_{\beta }(f)<J_{\sigma ^{n_{k}}(\g )}(f).
\end{equation}
We will show the following claim.\\ 
Claim: int$(K_{\g }(f))$ is connected.
 
 To show this claim, suppose that there exist two distinct components 
 $U_{1}$ and $U_{2}$ of 
 int$(K_{\g }(f)).$ 
Let $y_{i}\in U_{i}$ be a point, for each $i=1,2.$ 
Let $\epsilon >0$ be a number such that 
$\overline{D(K_{\beta }(f),\epsilon )}$ 
is included in a connected component $U$ of 
int$(K_{\alpha }(f)).$ 
Then, combining \cite[Theorem 2.14-(5)]{S1} and Lemma~\ref{constlimlem}, 
we get that there exists a number $k_{1}\in \NN $ with 
$k_{1}\geq k_{0}$ such that for each $k\geq k_{1}$ and each $i=1,2$, 
\begin{equation}
\label{mainthranlem1eq3}
f_{\g ,n_{k}}(y_{i})\in D(P^{\ast }(G),\epsilon )\subset 
\overline{D(K_{\beta }(f),\epsilon )}\subset U.
\end{equation}
Combining (\ref{mainthranlem1eq3}), (\ref{mainthranlem1eq1})
 and (\ref{mainthranlem1eq2}), 
we get that there exists a number $k_{2}\in \NN $ with 
$k_{2}\geq k_{1}$ such that for each $k\geq k_{2}$, 
\begin{equation}
\label{mainthranlem1eq4}
f_{\g ,n_{k}}(U_{1})=f_{\g ,n_{k}}(U_{2})=V_{k},
\end{equation}
where $V_{k}$ denotes the connected component of 
int$(K_{\sigma ^{n_{k}}(\g )}(f))$ containing 
$J_{\beta }(f).$ 
From (\ref{mainthranlem1eq2}) and (\ref{mainthranlem1eq4}), 
it follows that 
\begin{equation}
\label{mainthranlem1eq5}
(f_{\g ,n_{k}})^{-1}(J_{\beta }(f))
\subset \mbox{int}(K_{\g }(f)) 
\mbox{ and }
(f_{\g ,n_{k}})^{-1}(J_{\beta }(f))\cap 
U_{i}\neq \emptyset \ (i=1,2), 
\end{equation}
which implies that 
\begin{equation}
\label{mainthranlem1eq6}
(f_{\g ,n_{k}})^{-1}(J_{\beta }(f))
\mbox{ is disconnected.}
\end{equation}
For each $k\geq k_{2}$, let 
$\omega ^{k}:= (\g _{1},\ldots ,\g _{n_{k}},\beta _{1},\beta _{2},\ldots )
\in \GN .$ Then for each $k\geq k_{2}$, 
\begin{equation}
\label{mainthranlem1eq7}
(f_{\g ,n_{k}})^{-1}(J_{\beta }(f))=
J_{\omega ^{k}}(f).
\end{equation}
Since $G\in {\cal G}$, combining (\ref{mainthranlem1eq6}), 
(\ref{mainthranlem1eq7}) and Lemma~\ref{fibconnlem}
yields a contradiction. 
Hence, we have proved the claim.

 From the above claim and Proposition~\ref{shonecomp}, 
 it follows that 
$J_{\g }(f)$ is a Jordan curve.  
\end{proof}
We now investigate the situation 
that there exists a fiberwise Julia set which is a quasicircle 
and there exists another fiberwise Julia set which is not a Jordan curve.  
\begin{lem}
\label{mainthranlem2}
Let $\G $ be a non-empty compact subset of 
{\em Poly}$_{\deg \geq 2}.$ 
Let $f:\GN \times \CCI \rightarrow \GN \times \CCI $
 be the skew product associated with the family $\G $ of 
 polynomials. 
 Let $G$ be the polynomial semigroup generated by $\G .$ 
Let $\alpha ,\rho \in \GN $ be two elements. 
Suppose that $G\in {\cal G}$, that $G$ is semi-hyperbolic, 
that $\alpha $ is a periodic point of $\sigma :\GN \rightarrow \GN $, 
that $J_{\alpha }(f)$ is a quasicircle, 
and that $J_{\rho }(f)$ is not a Jordan curve. 
Then, for each $\epsilon >0$, there exist $n\in \NN $ and 
two elements $\theta _{1},\theta _{2}\in \partial D(0,1)$ 
with $\theta _{1}\neq \theta _{2}$ 
satisfying all of the following.
\begin{enumerate}
\item \label{mainthranlem2-1}
Let $\omega =(\alpha _{1},\ldots ,\alpha _{n},\rho _{1},\rho _{2},\ldots )
\in \GN $ and let $\psi :\CCI \setminus 
\overline{D(0,1)}\cong A_{\omega }(f)$ be a biholomorphic map 
with $\psi (\infty )=\infty .$ Moreover, 
for each $i=1,2$, let 
$T(\theta _{i}):=\psi (\{ r\theta _{i}\mid 1<r\leq \infty \} ).$ 
Then, there exists a point $p\in J_{\omega }(f)$ 
such that for each $i=1,2$, $T(\theta _{i})$ lands at $p.$ 
\item \label{mainthranlem2-2}
Let $V_{1}$ and $V_{2}$ be the two connected components of 
$\CCI \setminus (T(\theta _{1})\cup T(\theta _{2})\cup \{ p\} ).$ 
Then, for each $i=1,2$, 
$V_{i}\cap J_{\omega }(f)\neq \emptyset .$ 
Moreover, there exists an $i\in \{ 1,2\} $ such that 
{\em diam} $(V_{i}\cap K_{\omega }(f))\leq \epsilon $, 
and such that $V_{i}\cap J_{\omega }(f)\subset 
D(J_{\alpha }(f),\epsilon ).$  
\end{enumerate}   
\end{lem}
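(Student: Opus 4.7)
\emph{Strategy and reduction to a single polynomial.}
The plan is to exploit the periodicity of $\alpha $ to reduce the fibered dynamics over $\alpha $ to iteration of a single polynomial, and then to pull back a pinch of $J_{\rho }(f)$ through many iterates of that polynomial. Let $q$ be the period of $\alpha $ under $\sigma $, and set $h:=f_{\alpha ,q}$; then $J(h)=J_{\alpha }(f)$ is a quasicircle. Semi-hyperbolicity of $G$ together with $G\in {\cal G}$ gives $P(h)\subset P(G)\subset F(G)\cup \{ \infty \} \subset F(h)$, so $h$ is hyperbolic and any inverse branch of $h^{k}$ on an open set disjoint from $P(h)$ contracts at rate $O(\lambda ^{-k})$ for some $\lambda >1$. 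For each $k\in \NN $ I will take $n=n_{k}:=kq$ and $\omega =\omega _{k}:=(\alpha _{1},\ldots ,\alpha _{kq},\rho _{1},\rho _{2},\ldots )\in \GN $; then $f_{\omega _{k},n}=h^{k}$ and Lemma~\ref{fibfundlem}-\ref{fibfundlem1} gives $J_{\omega _{k}}(f)=h^{-k}(J_{\rho }(f))$, $A_{\omega _{k}}(f)=h^{-k}(A_{\rho }(f))$, and $K_{\omega _{k}}(f)=h^{-k}(K_{\rho }(f))$.

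\emph{Pinch extraction and pullback.}
By Lemma~\ref{fibconnlem2}, $J_{\rho }(f)$ is connected; by \cite[Theorem 1.12]{S4}, $A_{\rho }(f)$ is a John domain, so $\psi _{\rho }$ extends continuously to $\partial D(0,1)$. As $J_{\rho }(f)$ is not a Jordan curve, this extension fails to be injective, so I can pick distinct $\theta _{1}',\theta _{2}'\in \partial D(0,1)$ with $\psi _{\rho }(\theta _{1}')=\psi _{\rho }(\theta _{2}')=p'$; by \cite[Lemma 17.5]{M}, the two components $V_{1}',V_{2}'$ of $\CCI \setminus (T_{\rho }(\theta _{1}')\cup T_{\rho }(\theta _{2}')\cup \{ p' \} )$ both meet $J_{\rho }(f)$. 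In Riemann-map coordinates the conjugate $\psi _{\rho }^{-1}\circ h^{k}\circ \psi _{\omega _{k}}$ is a proper holomorphic self-map of $\CCI \setminus \overline{D(0,1)}$ fixing $\infty $, hence of the form $z\mapsto c_{k}z^{d_{k}}$ with $d_{k}=(\deg h)^{k}$. Since $P^{\ast }(G)\subset F(G)$, the set $J_{\rho }(f)\subset J(G)$ is disjoint from $P(h)$, so $h^{-k}(p')$ consists of $d_{k}$ distinct points at each of which $h^{k}$ is a local homeomorphism. Fix one such $\tilde p$ and let $\eta _{1},\eta _{2}\in \partial D(0,1)$ be the two angles assigned, via the local inverse branch of $h^{k}$ sending $p'\mapsto \tilde p$, to $\theta _{1}',\theta _{2}'$ (same branch of the $d_{k}$-th root); then $T_{\omega _{k}}(\eta _{i})$ both land at $\tilde p$, with $|\eta _{1}-\eta _{2}|=O(1/d_{k})$. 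Because $h^{k}$ is a local homeomorphism at $\tilde p$ and both $V_{i}'$ meet $J_{\rho }(f)$ arbitrarily close to $p'$, both components $V_{1},V_{2}$ of the resulting $\CCI $-partition meet $J_{\omega _{k}}(f)$.

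\emph{The two estimates, and the main obstacle.}
Let $i_{0}\in \{ 1,2\} $ be the index corresponding to the short arc $[\eta _{1},\eta _{2}]_{\mathrm{short}}$ of $\partial D(0,1)$. For the inclusion in the $\epsilon $-neighbourhood of $J_{\alpha }(f)$, continuity of $\omega \mapsto J_{\omega }(f)$ for semi-hyperbolic skew products (\cite[Theorem 2.14-(4)]{S1}) combined with $\omega _{k}\to \alpha $ in $\GN $ gives Hausdorff convergence $J_{\omega _{k}}(f)\to J_{\alpha }(f)$, so $V_{i_{0}}\cap J_{\omega _{k}}(f)\subset J_{\omega _{k}}(f)\subset D(J_{\alpha }(f),\epsilon )$ once $k$ is large. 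For the diameter bound, the Riemann-map conjugacy identifies $\Psi _{k}:=\psi _{\omega _{k}}([\eta _{1},\eta _{2}]_{\mathrm{short}})$ with $\phi _{k}(\psi _{\rho }([\theta _{1}',\theta _{2}']_{\mathrm{short}}))$, where $\phi _{k}$ is the local inverse branch of $h^{k}$ through $\tilde p$. The arc $\psi _{\rho }([\theta _{1}',\theta _{2}']_{\mathrm{short}})$ is a fixed compact subset of $J_{\rho }(f)$ lying at positive distance from $P(h)$, so $\phi _{k}$ is holomorphic on a fixed neighbourhood of it with $\sup |\phi _{k}'|=O(\lambda ^{-k})$ by hyperbolicity of $h$; hence $\mathrm{diam}(\Psi _{k})=O(\lambda ^{-k})\to 0$. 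The topological punchline is that $\Psi _{k}$ is a closed curve in $J_{\omega _{k}}(f)$ through $\tilde p$ separating the sector $W_{i_{0}}:=V_{i_{0}}\cap A_{\omega _{k}}(f)$ (unbounded in $\CC $) from the bounded set $V_{i_{0}}\cap K_{\omega _{k}}(f)$; the latter therefore lies in the union of bounded components of $\CC \setminus \Psi _{k}$, giving $\mathrm{diam}(V_{i_{0}}\cap K_{\omega _{k}}(f))\leq \mathrm{diam}(\Psi _{k})\to 0$. Taking $k$ large so that both estimates are at most $\epsilon $ and reading off $(n,\theta _{1},\theta _{2},p,i)=(n_{k},\eta _{1},\eta _{2},\tilde p,i_{0})$ completes the proof. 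The delicate step---and the main obstacle---is exactly this diameter estimate: it couples the topological identification of $V_{i_{0}}\cap K_{\omega _{k}}(f)$ as the $\Psi _{k}$-enclosed region (valid only because the two external rays co-land at the pinch $\tilde p$) with the hyperbolic-polynomial contraction of $\phi _{k}$ along the fixed $J_{\rho }(f)$-arc, and it is the coupling of these two ingredients that drives the whole argument.
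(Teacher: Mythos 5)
Your overall strategy---reduce to the return map $h=f_{\alpha ,q}$, locate a pinch via the non-injectivity of the boundary extension of the Riemann map, pull it back through inverse branches of $h^{k}$, and use \cite[Theorem 2.14-(4)]{S1} to place $J_{\omega _{k}}(f)$ inside $D(J_{\alpha }(f),\epsilon )$---is the same as the paper's. The genuine gap is in how you control the postcritical set, which is exactly the step the paper spends most of its proof securing. You assert that semi-hyperbolicity of $G$ gives $P(G)\subset F(G)$, and deduce from this both that $h$ is hyperbolic and that $J_{\rho }(f)$ is disjoint from $P(h)$, so that all $d_{k}$ inverse branches of $h^{k}$ exist at $p'$ and continue univalently along the arc $\psi _{\rho }([\theta _{1}',\theta _{2}']_{\mathrm{short}})$. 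But $P(G)\subset F(G)$ is the definition of hyperbolicity, not semi-hyperbolicity, and the lemma is invoked (in Lemma~\ref{mainthranlem3}) for semigroups that are only semi-hyperbolic, where $P^{\ast }(G)$ may meet $J(G)$. The hyperbolicity of $h$ itself can be rescued from the hypothesis that $J_{\alpha }(f)=J(h)$ is a quasicircle, but the disjointness of your arc from $P^{\ast }(\langle h\rangle )=\overline{\bigcup _{n}h^{n}(CV^{\ast }(h))}$ cannot: that set lies in $\mathrm{int}(K(h))$, which is Fatou for $\langle h\rangle $ but not for $G$, so it can perfectly well meet $J_{\rho }(f)$ or sit inside whichever component your ``short arc'' bounds. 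If it does, the branch $\phi _{k}$ does not continue univalently along the arc and the contraction/enclosure argument collapses. The paper's proof is organized precisely to avoid this: it first prepends enough periods of $\alpha $ so that $P^{\ast }(\langle h\rangle )$ is trapped in a single bounded component $B_{0}$ of $\mathrm{int}(K_{\omega ^{k}}(f))$ (Claim 1, via the Jordan curve $\xi $), then finds the pinch in $J_{\omega ^{k}}(f)$ rather than in $J_{\rho }(f)$, and chooses $W_{0}$ to be the component \emph{not} containing $B_{0}$, guaranteeing $\overline{W_{0}}\cap P^{\ast }(\langle h\rangle )=\emptyset $ before any pullback is attempted. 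Your proposal has no substitute for this step.

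A secondary weakness: the bound $\sup |\phi _{k}'|=O(\lambda ^{-k})$ ``by hyperbolicity of $h$'' is not justified on a set at positive distance from $J(h)$; uniform expansion holds only along orbits that remain near $J(h)$, whereas the orbit of $\phi _{k}(p')$ terminates at $p'\in J_{\rho }(f)$. The paper instead derives $\mathrm{diam}(W_{j}\cap J_{\omega ^{k+j}}(f))\rightarrow 0$ from normality of the inverse branches (Lemma~\ref{invnormal}) together with the Hausdorff convergence $J_{\omega ^{k+j}}(f)\rightarrow J_{\alpha }(f)$; this qualitative statement suffices, but it too is available only after the univalence of the branches on a neighborhood of $\overline{W_{0}}$ has been secured as above.
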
 
\begin{proof}
For each $\g \in \GN $, let 
$\psi _{\g }:\CCI \setminus \overline{D(0,1)}\cong 
A_{\g }(f)$ be a biholomorphic map with 
$\psi _{\g }(\infty )=\infty .$ Moreover, 
for each $\theta \in \partial D(0,1)$, 
let 
$T_{\g }(\theta ):= \psi _{\g }(\{ r\theta \mid 1<r\leq \infty \} ).$ 
Since $G$ is semi-hyperbolic, 
combining \cite[Theorem 1.12]{S4}, 
Lemma~\ref{fibconnlem}, and \cite[page 26]{NV}, 
we see that for each $\g \in \GN $, 
$J_{\g }(f)$ is locally connected. 
Hence, for each $\g \in \GN $, 
$\psi _{\g }$ extends continuously over 
$\CCI \setminus D(0,1)$ such that 
$\psi _{\g }(\partial D(0,1))=J_{\g }(f).$ 
Moreover, since $G\in {\cal G}$, 
it is easy to see that for each $\g \in \GN $, 
there exists a number $a_{\g }\in \CC $ with $|a_{\g }|=1$ such that 
for each $z\in \CCI \setminus \overline{D(0,1)}$, 
we have $\psi _{\sigma (\g )}^{-1}\circ f_{\g ,1}\circ 
\psi _{\g }(z)=a_{\g }z^{d(\g )}.$

 Let $m\in \NN $ be an integer such that $\sigma ^{m}(\alpha )=
\alpha $ and let $h:= \alpha _{m}\circ \cdots \circ \alpha _{1}.$ 
Moreover, for each $n\in \NN $, 
we set $\omega ^{n}:= (\alpha _{1},\ldots ,\alpha _{mn},
\rho _{1},\rho _{2},\ldots )\in \GN .$ 
Then, $\omega ^{n}\rightarrow \alpha $ in $\GN $ as 
$n\rightarrow \infty .$ 
Combining it with \cite[Theorem 2.14-(4)]{S1}, 
we obtain  
\begin{equation}
\label{mainthranlem2eq1}
J_{\omega ^{n}}(f)\rightarrow 
J_{\alpha }(f) \mbox{ as } n\rightarrow \infty , 
\end{equation} 
with respect to the Hausdorff topology.
Let $\xi $ be a Jordan curve in int$(K(h))$ 
such that $P^{\ast }(\langle h\rangle )$ is included 
in the bounded component $B$ of $\CC \setminus \xi .$ 
By (\ref{mainthranlem2eq1}), 
there exists a $k\in \NN $ such that 
$J_{\omega ^{k}}(f)\cap (\xi \cup B) =\emptyset .$ 
We now show the following claim.\\ 
Claim 1: $\xi \subset $ int$(K_{\omega ^{k}}(f)).$ 

 To show this claim, suppose that $\xi $ is included in 
$A_{\omega ^{k}}(f)=
\CCI \setminus (K_{\omega ^{k}}(f)).$ 
Then, it implies that 
$f_{\omega ^{k},u}\rightarrow \infty $ on 
$P^{\ast }(\langle h\rangle )$ as $u\rightarrow \infty .$ 
 However, this is a 
contradiction, since $G\in {\cal G}.$ 
Hence, we have shown Claim 1.

 By Claim 1, we see that $P^{\ast }(\langle h\rangle )$ is 
 included in a bounded component $B_{0}$ of 
 int$(K_{\omega ^{k}}(f)).$ 
We now show the following claim.\\ 
Claim 2: $J_{\omega ^{k}}(f)$ is not a 
Jordan curve. 

 To show this claim, suppose that 
$J_{\omega ^{k}}(f)$ is a Jordan curve. Then, 
Lemma~\ref{060416lema} implies 
that $J_{\rho }(f)$ is a Jordan curve. 
However, this is a contradiction. Hence, we have shown Claim 2.

By Claim 2, 
there exist two distinct elements $t_{1},t_{2}\in \partial D(0,1)$ 
 and a point $p_{0}\in J_{\omega ^{k} }(f)$ 
 such that for each $i=1,2$, 
 $T_{\omega ^{k} }(t_{i})$ lands at the point $p_{0}.$ 
Let $W_{0}$ be the connected component of  
$\CCI \setminus (T_{\omega ^{k} }(t_{1})\cup T_{\omega ^{k}}(t_{2})\cup 
\{ p_{0}\} )$ such that 
$W_{0}$ does not contain $B_{0}.$ 
Then, we have 
\begin{equation}
\label{mainthranlem2eq3}
\overline{W_{0}}\cap P^{\ast }(\langle h\rangle )=\emptyset .
\end{equation}
For each $j\in \NN $, we take a connected component 
$W_{j}$ of $(h^{j})^{-1}(W_{0}).$ 
Then, $h^{j}:W_{j}\rightarrow W_{0}$ is biholomorphic.
We set $\zeta _{j}:= (h^{j}|_{W_{j}})^{-1} $ on 
$W_{0}.$ By (\ref{mainthranlem2eq3}), 
 there exists a number $R>0$ and a number $a >0$ 
 such that 
for each $j$, $\zeta _{j}$ is analytically continued to a univalent function 
$\tilde{\zeta _{j}}:B(\overline{W_{0}\cap D(0,R)}, a )\rightarrow 
\CCI $ and 
$W_{j}\cap (J_{\omega ^{k+j}}(f))\subset 
\tilde{\zeta _{j}}(W_{0}\cap  
D(0,R)).$ 
Hence, we obtain
\begin{equation}
\label{mainthranlem2eq4}
\mbox{diam }(W_{j}\cap K_{\omega ^{k+j}}(f))=
\mbox{diam }(W_{j}\cap J_{\omega ^{k+j}}(f))\rightarrow 
0 \mbox{ as } j\rightarrow \infty .
\end{equation}
Combining (\ref{mainthranlem2eq1}) and (\ref{mainthranlem2eq4}), 
there exists an $s\in \NN $
 such that 
diam $(W_{s}\cap K_{\omega ^{k+s}}(f))\leq \epsilon $, and such that 
$W_{s}\cap J_{\omega ^{k+s}}(f)\subset D(J_{\alpha }(f),\epsilon ).$ 

 Each connected component 
of $(\partial W_{s})\cap \CC $ is a connected component of 
\\ $(h^{s})^{-1}((T_{\omega ^{k}}(t_{1})\cup T_{\omega ^{k}}(t_{2})
\cup \{ p_{0}\} )\cap \CC )$, and there are some 
$u_{1},\ldots ,u_{v}\in \partial D(0,1)$ such that 
$\partial W_{s}=\overline{\bigcup _{i=1}^{v}T_{\omega ^{k+s}}(u_{i})}.$ 
Hence, $W_{s}$ is a Jordan domain. Therefore, 
$h^{s}:\overline{W_{s}}\rightarrow \overline{W_{0}}$ is a 
homeomorphism. 
Thus, $h^{s}:(\partial W_{s})\cap \CC \rightarrow 
(\partial W_{0})\cap \CC $ is a homeomorphism. 
Hence, $(\partial W_{s})\cap \CC $ is connected. 
It follows that there exist 
two elements $\theta _{1},\theta _{2}\in \partial D(0,1)$ 
with $\theta _{1}\neq \theta _{2}$  and 
a point $p\in J_{\omega ^{k+s}}(f)$ 
such that 
$\partial W_{s}=T_{\omega ^{k+s}}(\theta _{1})\cup 
T_{\omega ^{k+s}}(\theta _{2})\cup \{ p\} $, and such that 
for each $i=1,2$, $T_{\omega ^{k+s}}(\theta _{i})$ 
lands at the point $p.$ 
 By the argument of \cite[Lemma 17.5]{M}, 
each of two connected components of 
$\CCI \setminus 
( T_{\omega ^{k+s}}(\theta _{1})\cup 
T_{\omega ^{k+s}}(\theta _{2})\cup \{ p\} )$ intersects 
$J_{\omega ^{k+s}}(f).$ 

 Hence, we have proved Lemma~\ref{mainthranlem2}.
\end{proof}
\begin{lem}
\label{mainthranlem3}
Let $\G $ be a non-empty compact subset of 
{\em Poly}$_{\deg \geq 2}.$ 
Let $f:\GN \times \CCI \rightarrow \GN \times \CCI $
 be the skew product associated with the family $\G $ of 
 polynomials. 
 Let $G$ be the polynomial semigroup generated by $\G .$ 
Let $\alpha ,\beta ,\rho \in \GN $ be three elements. 
Suppose that $G\in {\cal G}$, that $G$ is semi-hyperbolic, 
that $\alpha $ is a periodic point of $\sigma :\GN \rightarrow \GN $, 
that $J_{\beta }(f)<J_{\alpha }(f)$, 
and that $J_{\rho }(f)$ is not a Jordan curve. 
Then, there exists an $n\in \NN $ such that 
setting $\omega := (\alpha _{1},\ldots ,\alpha _{n},
\rho _{1},\rho _{2},\ldots )\in \GN $ and 
${\cal U}:= \{ \g \in \GN 
\mid \exists \{ m_{j}\} _{j\in \NN }, \exists \{ n_{k}\} _{k\in \NN }, 
\sigma ^{m_{j}}(\g )\rightarrow \alpha ,\ 
\sigma ^{n_{k}}(\g )\rightarrow \omega \} $, 
we have that for each $\g \in {\cal U},$ 
$J_{\g }(f)$ is a Jordan curve but not a quasicircle,  
$A_{\g }(f)$ is a John domain, and the bounded component 
$U_{\g }$ of $F_{\g }(f)$ is not a John domain. 
\end{lem}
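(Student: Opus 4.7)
}
The plan is to combine Lemmas~\ref{mainthranlem1}, \ref{mainthranlem2}, and \ref{060416lemast} so that for every $\gamma \in {\cal U}$ the two accumulation conditions in the definition of ${\cal U}$ yield, simultaneously, the Jordan-curve property (from $\sigma^{m_j}\gamma \to \alpha$) and the failure of the quasicircle property (from $\sigma^{n_k}\gamma \to \omega$). First I would establish that $J_{\alpha}(f)$ is a quasicircle. Let $m$ be the period of $\alpha$ and put $h := \alpha_{m}\circ\cdots\circ\alpha_{1}\in G$. Applying Lemma~\ref{mainthranlem1} to $\alpha$ itself, using the subsequence $\sigma^{km}(\alpha)=\alpha$ and the standing hypothesis $J_{\beta}(f)<J_{\alpha}(f)$, shows that $J_{\alpha}(f)=J(h)$ is a Jordan curve. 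Since $\langle h\rangle\subset G$ is semi-hyperbolic and postcritically bounded, the result of \cite{S4} recalled in the introduction gives that both $A_{\alpha}(f)$ and $U_{\alpha}:=\mathrm{int}(K_{\alpha}(f))$ are John domains, so by the John/quasicircle characterization noted after the definition of John domain, $J_{\alpha}(f)$ is a quasicircle.

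Next I would fix the parameters. Choose $\mu>0$ strictly less than $\tfrac{1}{2}\mathrm{dist}(J_{\alpha}(f),\ \pi^{-1}(\alpha)\cap P(f)\cap\CC)$; positivity of this distance follows because, as noted above, the $\alpha$-fiber postcritical set lies in the attractor contained in $U_{\alpha}$, separated from the quasicircle $J_{\alpha}(f)$. Let $\delta=\delta(\mu)>0$ be the constant supplied by Lemma~\ref{060416lemast}. Now apply Lemma~\ref{mainthranlem2} to the periodic $\alpha$, the $\rho$ with $J_{\rho}(f)$ not a Jordan curve, and with $\epsilon>0$ chosen so small that $\epsilon<\delta$ and $\overline{D(J_{\alpha}(f),\epsilon)}$ is $\mu$-separated from $\pi^{-1}(\alpha)\cap P(f)\cap\CC$. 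This yields $n\in\NN$, two distinct external-ray angles $\theta_{1},\theta_{2}$, the sequence $\omega:=(\alpha_{1},\ldots,\alpha_{n},\rho_{1},\rho_{2},\ldots)$, a landing point $p\in J_{\omega}(f)$, and a sector $V$ with $\mathrm{diam}(V\cap K_{\omega}(f))\leq\epsilon\leq\delta$ and $V\cap J_{\omega}(f)\subset D(J_{\alpha}(f),\epsilon)$. Continuity of fiberwise Julia sets under semi-hyperbolicity (\cite[Theorem 2.14]{S1}) together with the preimage description of $\pi^{-1}(\omega)\cap P(f)$ from the skew-product structure lets one propagate the $\mu$-separation from $\alpha$ to $\omega$, so that $\min\{|p-b|:(\omega,b)\in P(f),\ b\in\CC\}>\mu$, upon shrinking $\epsilon$ further if necessary and possibly enlarging $n$.

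Define ${\cal U}$ as in the statement and take $\gamma\in{\cal U}$. The Jordan-curve property of $J_{\gamma}(f)$ follows from Lemma~\ref{mainthranlem1} applied along the subsequence $\sigma^{m_{j}}(\gamma)\to\alpha$, using again the standing hypothesis $J_{\beta}(f)<J_{\alpha}(f)$. Since $J_{\gamma}(f)$ is connected (Lemma~\ref{fibconnlem2}) and $G$ is semi-hyperbolic, the result of \cite{S4} gives that $A_{\gamma}(f)$ is a John domain. For the failure of the quasicircle property I would invoke Lemma~\ref{060416lemast} with $\mu$, $\delta$, $\omega$, $p$, $V$, $\theta_{1}$, $\theta_{2}$ as assembled above and with the subsequence $\sigma^{n_{k}}(\gamma)\to\omega$; all hypotheses are met by construction, so $J_{\gamma}(f)$ is not a quasicircle. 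Finally, if the bounded Fatou component $U_{\gamma}$ were also a John domain, then by the remark that a Jordan curve is a quasicircle iff both complementary components are John, $J_{\gamma}(f)$ would be a quasicircle, contradicting what we just proved; hence $U_{\gamma}$ is not a John domain.

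The main obstacle is the parameter tuning in the middle step: one must arrange that the external-ray landing point $p\in J_{\omega}(f)$ produced by Lemma~\ref{mainthranlem2} is separated from the $\omega$-fiber of the postcritical set by the definite amount $\mu$ dictated by Lemma~\ref{060416lemast}, while simultaneously keeping $\mathrm{diam}(V\cap K_{\omega}(f))\leq\delta$. The quasicircle property of $J_{\alpha}(f)$ established in the first step is precisely what makes this feasible, because it gives a definite gap between $J_{\alpha}(f)$ and the postcritical attractor in $U_{\alpha}$; semi-hyperbolic continuity of $x\mapsto J_{x}(f)$ then transports this gap to the fiber over $\omega$ for large $n$.
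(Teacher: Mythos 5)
Your overall architecture matches the paper's: show $J_{\alpha }(f)$ is a quasicircle, feed $(\G ,\alpha ,\rho ,\epsilon )$ into Lemma~\ref{mainthranlem2} to produce $\omega $, $p$ and the sector $V$, get the Jordan-curve property from Lemma~\ref{mainthranlem1}, the failure of the quasicircle property from Lemma~\ref{060416lemast}, Johnness of $A_{\g }(f)$ from \cite{S4}, and non-Johnness of $U_{\g }$ from the two-sided John characterization of quasicircles. But the step you yourself flag as ``the main obstacle'' --- arranging $\min \{ |p-b| \mid (\omega ,b)\in P(f),\ b\in \CC \} >\mu $ --- is exactly where your argument has a genuine gap. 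You define $\mu $ from the distance between $J_{\alpha }(f)$ and the postcritical set \emph{in the fiber over $\alpha $}, and then propose to ``propagate the $\mu $-separation from $\alpha $ to $\omega $'' via continuity of $x\mapsto J_{x}(f)$ and a ``preimage description'' of $\pi ^{-1}(\{ \omega \} )\cap P(f).$ This does not work as stated: $P(f)$ is the closure of the forward orbit of $C(f)$, so the fiber over $\omega $ collects contributions from \emph{all} itineraries mapping onto $\omega $ and is not a pullback of the fiber over $\alpha $; it may contain points of $P^{\ast }(G)$ far closer to $J_{\alpha }(f)$ (hence to $p$) than your $\mu $, and neither shrinking $\epsilon $ nor enlarging $n$ can repair this, because $\delta $ and $\epsilon $ are chosen \emph{after} $\mu .$ The paper sidesteps the whole issue by setting $\mu :=\frac{1}{3}\min \{ |b-c|\mid b\in J_{\alpha }(f),\ c\in P^{\ast }(G)\} $: since $\pi _{\CCI }(P(f))\cap \CC \subset P^{\ast }(G)$ (cf. Remark~\ref{pcbrem}), this one constant controls the postcritical set in \emph{every} fiber; it is positive because $P^{\ast }(G)\subset \hat{K}(G)\subset K_{\beta }(f)$ and $J_{\beta }(f)<J_{\alpha }(f)$; and the factor $\frac{1}{3}$, together with $\epsilon \leq \mu $ and $V\cap J_{\omega }(f)\subset D(J_{\alpha }(f),\epsilon )$, makes the separation hypothesis at $p$ automatic. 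Replacing your $\mu $ by this one eliminates the propagation step entirely.

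A secondary point: to get that $J_{\alpha }(f)$ is a quasicircle you assert that the \emph{bounded} component {\rm int}$(K_{\alpha }(f))$ is a John domain ``by the result of \cite{S4} recalled in the introduction,'' but that result only gives Johnness of the basin of infinity --- indeed the lemma being proved is precisely that the bounded component is typically \emph{not} John. For the single polynomial $u=\alpha _{p}\circ \cdots \circ \alpha _{1}$ the conclusion is still reachable, either from \cite{CJY} (every Fatou component of a semi-hyperbolic polynomial is a John domain) or, as the paper does, by showing {\rm int}$(K(u))$ is connected via Lemma~\ref{fiborder} applied to $\zeta =(\alpha _{1},\ldots ,\alpha _{p},\beta _{1},\beta _{2},\ldots )$ and then using that a semi-hyperbolic polynomial whose Julia set is a Jordan curve has no parabolic cycle and hence a quasicircle Julia set.
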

\begin{proof}

Let $p\in \NN $ be a number such that 
$\sigma ^{p}(\alpha )=\alpha $ and let 
$u :=\alpha _{p}\circ \cdots \circ \alpha _{1}.$ 
We show the following claim.\\ 
Claim 1: $J(u)$ is a quasicircle.

 To show this claim, by assumption,  
we have 
$J_{\beta }(f)<J(u).$ 
Let $\zeta := (\alpha _{1},\ldots $ $,\alpha _{p},\beta _{1},\beta _{2},\ldots 
)\in \GN .$ Then, 
we have $J_{\zeta }(f)=
u^{-1}(J_{\beta }(f)).$ Moreover, 
since $G\in {\cal G}$, we have that 
$J_{\zeta }(f)$ is connected. 
Hence, it follows that 
$u^{-1}(J_{\beta }(f))$ is connected. 
Let $U$ be a connected component of int$(K(u))$ containing 
$ J_{\beta }(f)$ and $V$ a connected component 
of int$(K(u))$ containing $u^{-1}(J_{\beta }(f)).$ 
By Lemma~\ref{fiborder}, it must hold that $U=V.$ 
Therefore, we obtain $u^{-1}(U)=U.$ Thus,  
int$(K(u))=U.$ Since $G$ is semi-hyperbolic, 
it follows that $J(u)$ is a quasicircle.  Hence, we have proved Claim 1.

 Let 
$\mu :=\frac{1}{3}\min 
 \{ |b-c|\mid b\in J_{\alpha }(f), c\in P^{\ast }(G)\} .$
Since $J_{\beta }(f)<J_{\alpha }(f)$, we have 
$P^{\ast }(G)\subset K_{\beta }(f).$ Hence, 
$\mu >0.$  
Applying Lemma~\ref{060416lemast} to the above 
$(f,\mu )$, let $\delta $ be the number 
in the statement of Lemma~\ref{060416lemast}. 
We set $\epsilon := \min \{ \delta ,\mu \} (>0).$
Applying Lemma~\ref{mainthranlem2} to 
the above $(\G ,\alpha , \rho , \epsilon )$, 
let $(n,\theta _{1},\theta _{2},\omega )$ be the 
element in the statement of Lemma~\ref{mainthranlem2}.
We set 
${\cal U}:= 
\{ \g \in \GN \mid 
\exists \{ m_{j}\} _{j\in \NN }, 
\exists \{ n_{k}\} _{k\in \NN }, 
\sigma ^{m_{j}}(\g )\rightarrow \alpha , 
\sigma ^{n_{k}}(\g )\rightarrow \omega \} .$ 
Then, combining 
the statement Lemma~\ref{060416lemast} and that of 
Lemma~\ref{mainthranlem2}, 
it follows that 
for any $\g \in {\cal U}$, 
$J_{\g }(f)$ is not a quasicircle. 
Moreover, by Lemma~\ref{mainthranlem1}, 
we see that 
for any $\g \in {\cal U}$, 
$J_{\g }(f)$ is a Jordan curve. 
Furthermore, combining the above argument, 
\cite[Theorem 1.12]{S4}, Lemma~\ref{fibconnlem}, 
and \cite[Theorem 9.3]{NV}, 
we see that for any $\g \in {\cal U}$, 
$A_{\g }(f)$ is a John domain, and 
the bounded component $U_{\g }$ of $F_{\g }(f)$ is not a John domain.
Therefore, we have proved Lemma~\ref{mainthranlem3}. 
\end{proof}
 We now demonstrate Theorem~\ref{mainthran1}.\\ 
{\bf Proof of Theorem~\ref{mainthran1}:} 
We suppose the assumption of Theorem~\ref{mainthran1}. 
We will consider several cases. 
First, we show the following claim.\\ 
Claim 1: If $J_{\g }(f)$ is a Jordan curve 
 for each $\g \in \GN $, then statement \ref{mainthran1-1} 
in Theorem~\ref{mainthran1} holds.

 To show this claim, 
 Lemma~\ref{060416lema} implies that  for each $\g \in X$, 
any critical point $v\in \pi ^{-1}(\{ \g \} )$ 
of $f_{\g }:\pi ^{-1}(\{ \g \} )\rightarrow 
\pi ^{-1}(\{ \sigma (\g )\} )$ 
(under the canonical identification $\pi ^{-1}(\{ \g \} )
\cong \pi ^{-1}(\{ \sigma (\g )\} )\cong \CCI $) 
belongs to 
$F^{\g }(f).$
Moreover, by \cite[Theorem 2.14-(2)]{S1}, 
$\tilde{J}(f)=\bigcup _{\g \in \GN }J^{\g }(f).$ 
Hence, it follows that 
$C(f)\subset \tilde{F}(f).$ Therefore, 
$C(f)$ is a compact subset of $\tilde{F}(f).$  
Since $f$ is semi-hyperbolic, 
\cite[Theorem 2.14-(5)]{S1} implies that 
$P(f)=\overline{\bigcup _{n\in \NN  }f^{n}(C(f))}\subset \tilde{F}(f).$ 
Hence, $f:\GN \times \CCI \rightarrow \GN \times  \CCI $ 
is hyperbolic. 
Combining it with Remark~\ref{hypskewsemigrrem}, 
we conclude that 
 $G$ is hyperbolic. 
  Moreover, Theorem~\ref{hypskewqc} implies that 
  there exists a constant $K\geq 1$ such that for each $\g \in \GN $, 
  $J_{\g }(f)$ is a $K$-quasicircle.  
Hence, we have proved Claim 1.

 Next, we will show the following claim.\\ 
Claim 2: If $J_{\alpha }(f)\cap J_{\beta }(f)
\neq \emptyset $ 
for each $(\alpha ,\beta )\in \GN \times \GN $, 
then $J(G)$ is arcwise connected. 

 To show this claim, since $G$ is semi-hyperbolic, 
combining \cite[Theorem 1.12]{S4},
Lemma~\ref{fibconnlem}, and \cite[page 26]{NV},   
we get that for each $\g \in \GN $, 
$A_{\g }(f)$ is a John domain and 
$J_{\g }(f)$ is locally connected. 
In particular, for each $\g \in \GN $, 
\begin{equation}
\label{mainthran1pfeq2}
J_{\g }(f) \mbox{ is arcwise connected.} 
\end{equation}
Moreover, by \cite[Theorem 2.14-(2)]{S1}, 
we have 
\begin{equation}
\label{mainthran1pfeq3}
\tilde{J}(f)=\bigcup _{\g \in \GN }J^{\g }(f).
\end{equation} 
 Combining (\ref{mainthran1pfeq2}), (\ref{mainthran1pfeq3}) and 
Lemma~\ref{fiblem}-\ref{pic}, 
we conclude that $J(G)$ is arcwise connected. Hence, we have proved 
Claim 2. 

  Next, we will show the following claim. \\ 
Claim 3: If $J_{\alpha }(f)\cap J_{\beta }(f)
\neq \emptyset $ 
for each $(\alpha ,\beta )\in \GN \times \GN $, and if 
there exists an element $\rho \in \GN $ such that 
$J_{\rho }(f)$ is not a Jordan curve, 
then statement \ref{mainthran1-3} in Theorem~\ref{mainthran1} holds. 

 To show this claim, let 
 ${\cal V}:= \bigcup _{n\in \NN }(\sigma ^{n})^{-1}(\{ \rho \} ).$ 
 Then, ${\cal V}$ is a dense subset of $\GN .$ 
From Lemma~\ref{060416lema}, it follows that 
 for each $\g \in {\cal V}$, $J_{\g }(f)$ is not a Jordan 
 curve. Combining this result with Claim 2, we conclude that statement 
 \ref{mainthran1-3} in Theorem~\ref{mainthran1} holds. Hence, 
 we have proved Claim 3. 

 We now show the following claim. \\ 
Claim 4: If there exist two elements $\alpha ,\beta \in \GN $ such that 
$J_{\alpha }(f)\cap J_{\beta }(f)= 
\emptyset $, and if there exists an element $\rho \in \GN $ such that 
$J_{\rho }(f)$ is not a Jordan curve, then 
statement \ref{mainthran1-2} in Theorem~\ref{mainthran1} holds.

 To show this claim, using Lemma~\ref{fiborder}, 
 We may assume that $J_{\beta }(f)<
 J_{\alpha }(f).$ 
Combining this, Lemma~\ref{fiborder}, 
\cite[Theorem 2.14-(4)]{S1}, and that 
the set of all periodic points of $\sigma $ in 
$\GN $ is dense in $\GN $, 
we may assume further that 
$\alpha $ is a periodic point of $\sigma .$ 
Applying Lemma~\ref{mainthranlem3} to 
$(\G, \alpha ,\beta , \rho )$ above, 
let $n\in \NN $ be the element 
in the statement of Lemma~\ref{mainthranlem3}, and  
we set  
$\omega =(\alpha _{1},\ldots ,\alpha _{n}, \rho _{1},\rho _{2},\ldots 
)\in \GN $ and 
${\cal U}:= 
\{ \g \in \GN \mid 
\exists (m_{j}), \exists (n_{k}), 
\sigma ^{m_{j}}(\g )\rightarrow \alpha ,
\sigma ^{n_{k}}(\g )\rightarrow \omega \} .$ 
Then, by the statement of Lemma~\ref{mainthranlem3}, 
we have that for each $\g \in {\cal U}$, 
$J_{\g }(f)$ is a Jordan curve but not a quasicircle, 
$A_{\g }(f)$ is a John domain, and the 
bounded component $U_{\g }$ of $F_{\g }(f)$ is not 
a John domain.   
Moreover, ${\cal U} $ is residual in $\GN $, and 
for any Borel probability measure 
$\tau $ on Poly$_{\deg \geq 2}$ with $\G _{\tau }=\G$, 
we have $\tilde{\tau }({\cal U})=1.$ 
Furthermore, let ${\cal 
V}:= \bigcup _{n\in \NN }(\sigma ^{n})^{-1}(\{ \rho \} ).$ Then, 
${\cal V}$ is a dense subset of $\GN $, 
and the argument in the proof 
of Claim 3 implies that for each $\g \in {\cal V}$, 
$J_{\g }(f)$ is not a Jordan curve. 
Hence, we have proved Claim 4.

 Combining Claims 1,2,3 and 4, Theorem~\ref{mainthran1} follows.    
\qed 

We now demonstrate Corollary~\ref{rancor2}.\\ 
{\bf Proof of Corollary~\ref{rancor2}:} 
From Theorem~\ref{mainthran1}, 
Corollary~\ref{rancor2} immediately follows.  
\qed 

\ 

 To demonstrate Theorem~\ref{mainthran2}, we need several lemmas.

\noindent {\bf Notation:} 
For a subset $A$ of $\CCI $, we denote by ${\cal C}(A)$ the set of 
all connected components of $A.$ 
\begin{lem}
\label{mainthran2lem1}
Let $f:X\times \CCI \rightarrow X\times \CCI $ be a polynomial 
skew product over $g:X\rightarrow X$ such that for each 
$x\in X$, $d(x)\geq 2.$ Let $\alpha \in X$ be a point. 
Suppose that $2\leq \sharp \left( {\cal C}(  
\mbox{{\em int}}(K_{\alpha }(f)))\right)<\infty .$ 
Then, 
$\sharp \left( {\cal C}(  
\mbox{{\em int}}(K_{g(\alpha )}(f)))\right)
<\sharp \left( {\cal C}(  
\mbox{{\em int}}(K_{\alpha }(f)))\right).$ 
In particular, there exists an $n\in \NN $ such that 
{\em int}$(K_{g^{n}(\alpha )}(f))$ is a non-empty connected set.
\end{lem}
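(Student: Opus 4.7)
The plan is to analyze the map on components that $f_{\alpha,1}$ induces between $\mbox{int}(K_{\alpha}(f))$ and $\mbox{int}(K_{g(\alpha)}(f))$, and then to rule out injectivity via a Riemann-Hurwitz count.

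First I would define an induced map $\phi:{\cal C}(\mbox{int}(K_{\alpha}(f)))\to{\cal C}(\mbox{int}(K_{g(\alpha)}(f)))$. Using $(f_{\alpha,1})^{-1}(J_{g(\alpha)}(f))=J_{\alpha}(f)$ from Lemma~\ref{fibfundlem}-\ref{fibfundlem1} and the elementary identity $(f_{\alpha,1})^{-1}(A_{g(\alpha)}(f))=A_{\alpha}(f)$ (from the definitions of $A_{\cdot}(f)$), one obtains $(f_{\alpha,1})^{-1}(\mbox{int}(K_{g(\alpha)}(f)))=\mbox{int}(K_{\alpha}(f))$. Since $f_{\alpha,1}$ is a non-constant polynomial and hence continuous and open, each component $U$ of $\mbox{int}(K_{\alpha}(f))$ is carried into a unique component of $\mbox{int}(K_{g(\alpha)}(f))$, which defines $\phi$. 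Surjectivity of $\phi$ is immediate: any point of $\mbox{int}(K_{g(\alpha)}(f))$ has $d(\alpha)\geq 2$ preimages in $\CCI$, all of them in $\mbox{int}(K_{\alpha}(f))$.

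Next I would verify that every component of $\mbox{int}(K_{\alpha}(f))$ and of $\mbox{int}(K_{g(\alpha)}(f))$ is simply connected. Since $A_{\alpha}(f)$ is connected (Lemma~\ref{fibfundlem}-\ref{fibfundlem4}), the set $\overline{A_{\alpha}(f)}=A_{\alpha}(f)\cup J_{\alpha}(f)=\CCI\setminus\mbox{int}(K_{\alpha}(f))$ is connected; for any fixed component $U$, every other component $U'$ satisfies $\partial U'\subset J_{\alpha}(f)\subset\overline{A_{\alpha}(f)}$, so a point of $\partial U'$ joins $U'$ to $\overline{A_{\alpha}(f)}$, making $\CCI\setminus U=\overline{A_{\alpha}(f)}\cup\bigcup_{U'\neq U}U'$ connected and hence $U$ simply connected. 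The same argument applies at the $g(\alpha)$ fiber.

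The main step is a Riemann-Hurwitz count. Suppose for contradiction that $\phi$ is injective; as it is also surjective and $n:=\sharp({\cal C}(\mbox{int}(K_{\alpha}(f))))$ is finite, $\phi$ is a bijection. Each restriction $f_{\alpha,1}|_{U_{i}}:U_{i}\to V_{j(i)}$ is then proper (preimages of compacta in $V_{j(i)}$ stay away from $\partial U_{i}\subset J_{\alpha}(f)$) and of degree $d(\alpha)$, because under the bijection all $d(\alpha)$ preimages of any point of $V_{j(i)}$ must lie in $U_{i}$. Since $U_{i}$ and $V_{j(i)}$ are topological disks, Riemann-Hurwitz yields exactly $d(\alpha)-1$ critical points of $f_{\alpha,1}$ with multiplicity inside each $U_{i}$. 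Summing over the $n\geq 2$ components produces at least $2(d(\alpha)-1)$ finite critical points, which contradicts the fact that the polynomial $f_{\alpha,1}$ of degree $d(\alpha)$ has exactly $d(\alpha)-1$ critical points in $\CC$ (counted with multiplicity), using $d(\alpha)\geq 2$. Hence $\phi$ is not injective, and by surjectivity $\sharp({\cal C}(\mbox{int}(K_{g(\alpha)}(f))))<n$.

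The ``in particular'' clause follows by iteration: continuity of $f_{\alpha,n}$ gives $\emptyset\neq f_{\alpha,n}(\mbox{int}(K_{\alpha}(f)))\subset\mbox{int}(K_{g^{n}(\alpha)}(f))$ for every $n\in\NN$, while the component count strictly decreases at each step as long as it is $\geq 2$; after finitely many iterations the count reaches $1$, so $\mbox{int}(K_{g^{n}(\alpha)}(f))$ is a non-empty connected set. The main obstacle is ensuring the hypotheses of Riemann-Hurwitz — the simple connectivity of each component and the proper-of-degree-$d(\alpha)$ structure of the restriction — once those are in hand the critical-point count produces the contradiction immediately.
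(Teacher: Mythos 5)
Your proof is correct and follows essentially the same route as the paper: both arguments reduce to the observation that the induced map on components of the interiors of the filled-in Julia sets is surjective, and then rule out bijectivity by a Riemann--Hurwitz count on the (simply connected) components, which would force $r(d(\alpha)-1)$ finite critical points for $f_{\alpha,1}$ against the available $d(\alpha)-1$. The extra details you supply (simple connectivity of the components via connectedness of $\overline{A_{\alpha }(f)}$, and properness of the restrictions) are exactly the points the paper's proof leaves implicit.
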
 
\begin{proof}
Suppose that 
$2\leq \sharp ({\cal C}(\mbox{int}(K_{g(\alpha )}(f))))=
\sharp ({\cal C}(\mbox{int}(K_{\alpha }(f))))<\infty .$ We 
will deduce a contradiction. 
Let $\{ V_{j}\} _{j=1}^{r}={\cal C}(\mbox{int}(K_{\alpha }(f)))$, 
where $2\leq r=\sharp ({\cal C}(\mbox{int}(K_{\alpha }(f))))<\infty .$  
Then, by the assumption above, 
we have that 
${\cal C}(\mbox{int}(K_{g(\alpha )}(f)))
=\{ f_{\alpha ,1}(V_{j})\} _{j=1}^{r}.$ 
For each $j=1,\ldots ,r$, let $p_{j}$ be the 
number of critical points of 
$f_{\alpha ,1}:V_{j}\rightarrow 
f_{\alpha ,1}(V_{j})$ counting multiplicities. 
Then, by the Riemann-Hurwitz formula, 
we have that for each $j=1,\ldots ,r$, 
$\chi (V_{j})+p_{j}=d\chi (f_{\alpha ,1}(V_{j}))$, 
where $\chi (\cdot) $ denotes the Euler number and 
$d:=\deg (f_{\alpha ,1}).$ 
Since $\chi (V_{j})=\chi (f_{\alpha ,1}(V_{j}))=1$ for each $j$, 
we obtain $r+\sum _{j=1}^{r}p_{j}=rd.$ 
Since $\sum _{j=1}^{r}p_{j}\leq d-1$, 
it follows that $rd-r\leq d-1.$ 
Therefore, we obtain $r\leq 1$, which is a contradiction. 
Thus, we have proved Lemma~\ref{mainthran2lem1}.
\end{proof}
\begin{lem}
\label{mainthran2lem2}
Let $f:X\times \CCI \rightarrow X\times \CCI $ be a 
polynomial skew product over $g:X\rightarrow X$ such that 
for each $x\in X$, $d(x)\geq 2.$ Let 
$\omega \in X$ be a point. 
Suppose that $f$ is hyperbolic, 
that $\pi _{\CCI }(P(f))\cap \CC $ is bounded in $\CC $, 
and that {\em int}$(K_{\omega }(f))$ is not connected. Then, 
there exist infinitely many connected components of 
{\em int}$(K_{\omega }(f)).$ 
\end{lem}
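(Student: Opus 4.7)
I will argue by contradiction. Assume that $2\le r:=\sharp({\cal C}(\mbox{int}(K_{\omega }(f))))<\infty .$ First, I observe that non-emptiness of $\mbox{int}(K_{x}(f))$ propagates along the orbit of $g$: since each fiber map $f_{x,1}$ is a non-constant polynomial and hence an open map, $f_{\omega ,1}(\mbox{int}(K_{\omega }(f)))$ is a non-empty open subset of $K_{g(\omega )}(f)$. Thus, iterating Lemma~\ref{mainthran2lem1}, the strictly decreasing sequence of positive integers $\sharp({\cal C}(\mbox{int}(K_{g^{k}(\omega )}(f))))$ must eventually reach $1$, so there exists $n\in \NN $ such that $W:=\mbox{int}(K_{g^{n}(\omega )}(f))$ is a non-empty connected open set.

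Next, I claim $W$ is simply connected and that each component of $\mbox{int}(K_{\omega }(f))$ is simply connected. Using Lemma~\ref{fibfundlem}-\ref{fibfundlem4} and \ref{fibfundlema}, one has the disjoint decomposition $\CCI =A_{g^{n}(\omega )}(f)\sqcup J_{g^{n}(\omega )}(f)\sqcup W$, with $A_{g^{n}(\omega )}(f)$ connected; hence $\CCI \setminus W=\overline{A_{g^{n}(\omega )}(f)}$ is connected, so $W$ is simply connected. Let $U_{1},\ldots ,U_{r}$ be the components of $\mbox{int}(K_{\omega }(f))$. Because $f_{\omega ,n}$ is an open continuous polynomial, $(f_{\omega ,n})^{-1}(W)=\mbox{int}(K_{\omega }(f))=\bigsqcup _{j}U_{j}.$ For each fixed $j$, the complement $\CCI \setminus U_{j}=\overline{A_{\omega }(f)}\cup \bigcup _{i\neq j}U_{i}$ is connected, since $\partial U_{i}\subset J_{\omega }(f)\subset \overline{A_{\omega }(f)}$ for each $i$; hence every $U_{j}$ is simply connected.

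Now apply the Riemann--Hurwitz formula to each proper branched covering $f_{\omega ,n}:U_{j}\to W$ of some degree $d_{j}\ge 1$ with $\sum _{j=1}^{r}d_{j}=d:=\deg (f_{\omega ,n}).$ Letting $p_{j}$ denote the total multiplicity of critical points of $f_{\omega ,n}$ inside $U_{j}$, one gets $\chi (U_{j})+p_{j}=d_{j}\chi (W)$, i.e., $1+p_{j}=d_{j}$, whence $p:=\sum _{j}p_{j}=d-r$. The final step is to show $p=d-1$ using the hypotheses: hyperbolicity gives $P(f)\subset \tilde{F}(f)$, and by the chain rule every fiberwise critical value of $f_{\omega ,n}$ lies in $\pi _{\CCI }(P(f)\cap \pi ^{-1}(\{ g^{n}(\omega )\} ))\subset F_{g^{n}(\omega )}(f)$; since $\pi _{\CCI }(P(f))\cap \CC $ is bounded, every critical value of $f_{\omega ,n}$ in $\CC $ actually lies in the bounded part of $F_{g^{n}(\omega )}(f)$, which equals $W$. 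Hence every critical point of the polynomial $f_{\omega ,n}$ in $\CC $ lies in $(f_{\omega ,n})^{-1}(W)=\mbox{int}(K_{\omega }(f))$, and since such a polynomial has exactly $d-1$ critical points in $\CC $ counted with multiplicity, $p=d-1$. Combining with $p=d-r$ yields $r=1$, contradicting $r\ge 2$. Thus $\mbox{int}(K_{\omega }(f))$ has infinitely many components.

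The main obstacle is the last step: one must carefully verify that every fiberwise critical value of the composition $f_{\omega ,n}$ indeed belongs to $P(f)$ (a chain-rule/induction argument on $n$) and then extract, from hyperbolicity together with the boundedness of $\pi _{\CCI }(P(f))\cap \CC $, that these critical values avoid both the Julia set $J_{g^{n}(\omega )}(f)$ and the basin $A_{g^{n}(\omega )}(f)$. Once this is in hand, the Riemann--Hurwitz bookkeeping forces $r=1$, closing the argument.
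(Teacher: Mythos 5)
Your proposal is correct and follows essentially the same route as the paper: reduce to a connected $W=\mathrm{int}(K_{g^{n}(\omega )}(f))$ via Lemma~\ref{mainthran2lem1}, apply the Riemann--Hurwitz formula to the components of $(f_{\omega ,n})^{-1}(W)$, and use hyperbolicity plus boundedness of $\pi _{\CCI }(P(f))\cap \CC $ to conclude that all $d-1$ critical points of $f_{\omega ,n}$ lie over $W$, forcing $r=1$. The only difference is that you spell out the simple-connectivity and critical-value bookkeeping that the paper leaves implicit.
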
 
\begin{proof}
Suppose that 
$2\leq \sharp ({\cal C}(\mbox{int}(K_{\omega }(f))))<\infty .$ 
Then, by Lemma~\ref{mainthran2lem1}, 
there exists an $n\in \NN $ such that 
int$(K_{g^{n}(\omega )}(f))$ is connected. 
We set $U:=$ int$(K_{g^{n}(\omega )}(f)).$ 
Let $\{ V_{j}\} _{j=1}^{r}$ be the set of 
all connected components 
of $(f_{\omega ,n})^{-1}(U).$ Since 
int$(K_{\omega }(f))$ is not connected, 
we have $r\geq 2.$   
For each $j=1,\ldots ,r$, we set 
$d_{j}:=\deg (f_{\omega ,n}:V_{j}\rightarrow 
U).$ Moreover, 
we denote by $p_{j}$ the number of critical points 
of $f_{\omega ,n}:V_{j}\rightarrow U$ 
counting multiplicities. 
Then, by the Riemann-Hurwitz formula, we see that for each 
$j=1,\ldots ,r$, 
$\chi (V_{j})+p_{j}=d_{j}\chi (U).$
Since $\chi (V_{j})=\chi (U)=1$ for each $j=1,\ldots ,r$, 
it follows that 
\begin{equation}
\label{mainthran2lem2eq2}
r+\sum _{j=1}^{r}p_{j}=d,
\end{equation}
where $d:= \deg (f_{\omega ,n}).$ 
Since $f$ is hyperbolic and $\pi _{\CCI }(P(f))\cap \CC $ is bounded in 
$\CC $, we have $\sum _{j=1}^{r}p_{j}=d-1.$ Combining it with 
(\ref{mainthran2lem2eq2}), we obtain $r=1$, 
which is a contradiction. Hence, we have proved Lemma~\ref{mainthran2lem2}. 
\end{proof}
\begin{lem}
\label{mainthran2lem3}
Let 
$f:X \times \CCI \rightarrow X \times \CCI $ be a polynomial skew 
product over $g:X\rightarrow X.$ 
Let 
$\alpha \in X $ be an element.  
Suppose that $\pi _{\CCI }(P(f))\cap \CC $ is bounded in $\CC $, that $f$ is hyperbolic, and 
that {\em int}$(K_{\alpha }(f)))$ is connected. Then, there exists a 
neighborhood ${\cal U}_{0}$ of $\alpha $ in $X$ satisfying the 
following.
\begin{itemize}
\item Let $\g \in X $ and suppose that there exists a sequence 
$\{ m_{j}\} _{j\in \NN }\subset \NN , m_{j}\rightarrow \infty $ such that 
for each $j\in \NN $, $g ^{m_{j}}(\g )\in {\cal U}_{0}.$ Then, 
$J_{\g }(f)$ is a Jordan curve.  
\end{itemize}  
\end{lem}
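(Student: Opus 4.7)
First I would record the standing properties. Hyperbolicity of $f$ together with the boundedness of $\pi_{\CCI}(P(f))\cap\CC$ implies, via Lemma~\ref{fibconnlem}, that $J_x(f)$ is connected for every $x\in X,$ and by \cite[Theorem 2.14-(4)]{S1} the map $x\mapsto J_x(f)$ is Hausdorff-continuous. Since each $J_x(f)$ is a compact connected subset of $\CC,$ every component of $\CCI\setminus J_x(f)$ is simply connected; in particular every component of int$(K_x(f))$ is a topological disk. Moreover, every critical point of $f_{x,1}$ in $\CC$ lies in int$(K_x(f))$ by hyperbolicity. Since int$(K_\alpha(f))$ is non-empty and connected, Proposition~\ref{shonecomp} yields that $J_\alpha(f)$ is a Jordan curve.

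Next I would construct ${\cal U}_0$ so that int$(K_x(f))$ is connected for every $x\in {\cal U}_0.$ Let $c_1(\alpha),\ldots,c_{d_\alpha-1}(\alpha)$ denote the critical points of $f_{\alpha,1}$ in $\CC,$ all lying inside the topological disk int$(K_\alpha(f)),$ and join them by a compact path $\eta\Subset$ int$(K_\alpha(f)).$ Continuity of the critical points of $f_{x,1}$ in $x$ combined with Hausdorff continuity of $x\mapsto J_x(f)$ ensures that for $x$ in a sufficiently small neighborhood of $\alpha,$ $\eta$ is disjoint from $J_x(f)$ and hence lies in a single component $V_x$ of int$(K_x(f));$ moreover, each critical point of $f_{x,1}$ can be connected to $\eta$ by a short arc inside int$(K_x(f))$ and therefore lies in $V_x.$ Applying Riemann--Hurwitz to $f_{x,1}\colon V_x\to f_{x,1}(V_x)$ between simply connected domains gives $\deg(f_{x,1}|_{V_x})=\deg f_{x,1},$ so $V_x$ exhausts the preimage of $f_{x,1}(V_x)$ in int$(K_x(f)).$ Iterating this one-step argument along finitely many forward images of $\alpha,$ and invoking the dichotomy ``connected or infinitely many components'' from Lemma~\ref{mainthran2lem2} to rule out spurious bounded Fatou components created under small perturbation, one obtains a neighborhood ${\cal U}_0$ of $\alpha$ on which int$(K_x(f))$ is connected.

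Finally, suppose $\gamma\in X$ admits a sequence $\{m_j\}_{j\in\NN}$ with $m_j\to\infty$ and $g^{m_j}(\gamma)\in{\cal U}_0.$ Fix such a $j,$ set $D:=\deg f_{\gamma,m_j},$ and let $U$ be the unique component of int$(K_{g^{m_j}(\gamma)}(f)).$ Since $(f_{\gamma,m_j})^{-1}(U)=$ int$(K_\gamma(f)),$ each component $V_i$ of int$(K_\gamma(f))$ maps properly onto $U$ with some degree $d_i$ and $\sum_i d_i=D.$ As each $V_i$ and $U$ is simply connected, Riemann--Hurwitz gives $d_i=1+p_i,$ where $p_i$ counts, with multiplicity, the critical points of $f_{\gamma,m_j}$ in $V_i.$ Hyperbolicity together with boundedness of $\pi_{\CCI}(P(f))\cap\CC$ places all $D-1$ critical points of $f_{\gamma,m_j}$ in $\CC$ inside int$(K_\gamma(f)),$ so summing over $i$ yields $\sharp\{V_i\}=D-(D-1)=1.$ Hence int$(K_\gamma(f))$ is a non-empty connected set, and Proposition~\ref{shonecomp} gives that $J_\gamma(f)$ is a Jordan curve. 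The main obstacle is the iterative construction of ${\cal U}_0$ in the middle paragraph: propagating the one-step Riemann--Hurwitz argument along the finitely many iterates of $g$ needed while controlling the possible appearance of new bounded Fatou components under small perturbation.
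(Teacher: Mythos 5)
Your final Riemann--Hurwitz count is essentially the paper's, but the middle paragraph contains a genuine gap: the property you try to build into ${\cal U}_{0}$ --- that int$(K_{x}(f))$ is connected for \emph{every} $x\in {\cal U}_{0}$ --- is strictly stronger than what is needed, and it is in fact false in exactly the situation where this lemma gets used. In Case 3 of the proof of Theorem~\ref{mainthran2} there is a $\beta \in \GN $ with int$(K_{\beta }(f))$ disconnected; then the points $\omega ^{n}=(\alpha _{1},\ldots ,\alpha _{n},\beta _{1},\beta _{2},\ldots )$ converge to $\alpha $, while int$(K_{\omega ^{n}}(f))=(f_{\omega ^{n},n})^{-1}(\mbox{int}(K_{\beta }(f)))$ is disconnected for every $n$. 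So no neighborhood of $\alpha $ can have the property you want, and your appeal to Lemma~\ref{mainthran2lem2} cannot ``rule out'' such points: that lemma only says disconnected implies infinitely many components, and in any case connectivity of int$(K_{x}(f))$ is governed by the entire forward orbit of $x$, not by finitely many iterates, so the ``one-step Riemann--Hurwitz propagated along finitely many forward images'' cannot close. This also undermines your last paragraph, where you take $U$ to be ``the unique component'' of int$(K_{g^{m_{j}}(\g )}(f))$: if that fiber has several components, $(f_{\g ,m_{j}})^{-1}(U)$ need not be all of int$(K_{\g }(f))$ and the critical-point count $\sum _{i}p_{i}=D-1$ is no longer justified.

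The paper's construction of ${\cal U}_{0}$ asks for much less: choose a Jordan curve $\xi $ in int$(K_{\alpha }(f))$ whose bounded complementary component $B$ contains $\pi _{\CCI }(P^{\ast }(f)\cap \pi ^{-1}(\{ \alpha \} ))$, and use the Hausdorff continuity of $x\mapsto J_{x}(f)$ (valid under hyperbolicity) to get a neighborhood ${\cal U}_{0}$ on which $J_{\beta }(f)\cap (\xi \cup B)=\emptyset $, $\xi <J_{\beta }(f)$, and $\pi _{\CCI }(P^{\ast }(f)\cap \pi ^{-1}(\{ \beta \} ))\subset B$. This only guarantees that the fiberwise postcritical set over $\beta $ sits in a \emph{single} component $V_{\beta }$ of int$(K_{\beta }(f))$, which may well have other components. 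The missing dynamical ingredient is then \cite[Corollary 2.7]{S4}: given two distinct components $W_{1},W_{2}$ of int$(K_{\g }(f))$, the uniform shrinking of images of compact subsets of Fatou components forces $f_{\g ,m_{j}}(W_{1})=f_{\g ,m_{j}}(W_{2})=W\supset \pi _{\CCI }(P^{\ast }(f)\cap \pi ^{-1}(\{ g^{m_{j}}(\g )\} ))$ for some large $j$; only then does $W$ contain all critical values of $f_{\g ,m_{j}}$, and the Riemann--Hurwitz count on the components of $(f_{\g ,m_{j}})^{-1}(W)$ gives $r=1$, a contradiction. You need both of these steps (the single-component containment of the fiber postcritical set, and the contraction argument identifying the images of $W_{1}$ and $W_{2}$) to replace your false intermediate claim.
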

\begin{proof}
Let $P^{\ast }(f):= P(f)\setminus \pi _{\CCI }^{-1}(\{ \infty \} ).$ 
By assumption, we have 
$\pi _{\CCI }(P^{\ast }(f)\cap \pi ^{-1}(\{ \alpha \} ))\subset \mbox{ int}(K_{\alpha }(f)).$ 
Since int$(K_{\alpha }(f))$ is simply connected, 
there exists a Jordan curve $\xi $ in 
int$(K_{\alpha }(f))$ such that 
$\pi _{\CCI }(P^{\ast }(f)\cap \pi ^{-1}(\{ \alpha \} ))$ is included in the bounded component $B$ of 
$\CC \setminus \xi .$ 
Since $f$ is hyperbolic, \cite[Theorem 2.14-(4)]{S1} implies that 
the map $x \mapsto J_{x }(f)$ is continuous with respect to 
the Hausdorff topology. 
Hence, there exists a neighborhood ${\cal U}_{0}$ of $\alpha $ 
in $X $ such that for each $\beta \in {\cal U}_{0}$, 
$J_{\beta }(f)\cap (\xi \cup B)=\emptyset .$ Moreover, since $P(f) $ is compact, 
shrinking ${\cal U}_{0}$ if necessary, we may assume that for each $\beta \in {\cal U}_{0}$, 
$\pi _{\CCI }(P^{\ast }(f)\cap \pi ^{-1}(\{ \beta \} ))\subset B.$   
Since $\pi _{\CCI }(P(f))\cap \CC $ is bounded in $\CC $, it follows that for each $\beta \in {\cal U}_{0}$, 
$\xi <J_{\beta }(f).$ Hence, for each $\beta \in {\cal U}_{0}$, 
there exists a connected component $V_{\beta }$ of 
int$(K_{\beta }(f))$ such that 
\begin{equation}
\label{mainthran2lem3eq1}
\pi _{\CCI }(P^{\ast }(f)\cap \pi ^{-1}(\{ \beta \} ))\subset V_{\beta }.
\end{equation} 
Let $\g \in X$ be an element and suppose that 
there exists a sequence $\{ m_{j}\} _{j\in \NN }\subset \NN , 
m_{j}\rightarrow \infty $
 such that for each $j\in \NN $, $g ^{m_{j}}(\g )\in 
 {\cal U}_{0}.$ 
We will show that int$(K_{\g }(f))$ is connected. 
Suppose that there exist two distinct connected components  
$W_{1}$ and $W_{2}$ of int$(K_{\g }(f)).$ 
Then, combining 
\cite[Corollary 2.7]{S4} and 
(\ref{mainthran2lem3eq1}), we get that 
there exists a $j\in \NN $ such that 
\begin{equation}
\label{mainthran2lem3eq2}
\pi _{\CCI }(P^{\ast }(f)\cap \pi ^{-1}(\{ \beta \} ))\subset f_{\g ,m_{j}}(W_{1})=f_{\g ,m_{j}}(W_{2}).
\end{equation}
We set $W=f_{\g ,m_{j}}(W_{1})=f_{\g ,m_{j}}(W_{2}).$ 
Let $\{ V_{i}\} _{i=1}^{r}$ be the set of all connected components of 
$(f_{\g ,m_{j}})^{-1}(W).$ 
Since $W_{1}\neq W_{2}$, we have $r\geq 2.$ 
For each $i=1,\ldots ,r$, we denote by $p_{i}$ the 
number of critical points of $f_{\g ,m_{j}}:V_{i}\rightarrow W$ 
counting multiplicities. Moreover, we set 
$d_{i}:= \deg (f_{\g ,m_{j}}:V_{i}\rightarrow W).$ 
Then, by the Riemann-Hurwitz formula, we see that for each 
$i=1,\ldots ,r$, 
$\chi (V_{i})+p_{i}=d_{i}\chi (W).$ 
Since $\chi (V_{i})=\chi (W)=1$, it follows that 
\begin{equation}
\label{mainthran2lem3eq3}
r+\sum _{i=1}^{r}p_{i}=d, \mbox{ where }d:= \deg (f_{\g ,m_{j}}).
\end{equation} 
By (\ref{mainthran2lem3eq2}), we have $\sum _{i=1}^{r}p_{i}=d-1.$ 
Hence, (\ref{mainthran2lem3eq3}) implies 
$r=1$, which is a contradiction. Therefore, 
int$(K_{\g }(f))$ is a non-empty connected set. 
Combining it with Proposition~\ref{shonecomp}, 
we conclude that $J_{\g }(f)$ is a Jordan curve.   

 Thus, we have proved Lemma~\ref{mainthran2lem3}.
\end{proof}
We now demonstrate Theorem~\ref{mainthran2}.\\ 
{\bf Proof of Theorem~\ref{mainthran2}:} 
We suppose the assumption of Theorem~\ref{mainthran2}. 
We consider the following three cases. 

 \noindent Case 1: For each $\g \in \GN $, int$(K_{\g }(f))$ is connected. \\ 
 Case 2: For each $\g \in \GN $, int$(K_{\g }(f))$ is disconnected.\\ 
 Case 3: There exist two elements $\alpha \in \GN $ and $\beta \in \GN $ 
 such that int$(K_{\alpha }(f))$ is connected and such that 
 int$(K_{\beta }(f))$ is 
 disconnected. 

 Suppose that we have Case 1. 
 Then, by Theorem~\ref{hypskewqc}, there exists a constant $K\geq 1$ such that 
 for each $\g \in \GN $, $J_{\g }(f)$ is a $K$-quasicircle. 

 Suppose that we have  Case 2. Then, by 
 Lemma~\ref{mainthran2lem2}, we get that 
 for each $\g \in \GN $, there exist infinitely many 
 connected components of int$(K_{\g }(f)).$ 
 Moreover, by Theorem~\ref{mainthran1}, 
 we see that statement \ref{mainthran1-3} in Theorem~\ref{mainthran1} 
 holds. Hence, statement \ref{mainthran2-3} in Theorem~\ref{mainthran2} holds.

 Suppose that we have Case 3. 
 By Lemma~\ref{mainthran2lem2}, there exist infinitely many 
 connected components of int$(K_{\beta }(f)).$ 
 Let ${\cal W}:= \bigcup _{n\in \NN }(\sigma ^{n})^{-1}(\{ \beta \} ).$ 
 Then, for each $\g \in {\cal W}$, there exist infinitely many 
 connected components of int$(K_{\g }(f)).$ Moreover, 
 ${\cal W}$ is dense in $\GN .$ 
 
Next, combining Lemma~\ref{mainthran2lem3} and that the set of all periodic points of $\sigma :\GN \rightarrow 
  \GN $ is dense in $\GN $, we may assume that 
  the above $\alpha $ is a periodic point of $\sigma .$ 
  Then, $J_{\alpha }(f)$ is a quasicircle. 
 We set ${\cal V}:= \bigcup _{n\in \NN }(\sigma ^{n})^{-1}(\{ \alpha \} ).$ 
 Then ${\cal V}$ is dense in $\GN .$ 
 Let $\g \in {\cal V}$ be an element. 
Then there exists an $n\in \NN $ such that 
$\sigma ^{n}(\g )=\alpha .$ 
Since $(f_{\g ,n})^{-1}(K_{\alpha }(f))=
K_{\g }(f)$, it follows that 
$\sharp ({\cal C}($int$(K_{\g }(f))))<\infty .$ 
Combining it with Lemma~\ref{mainthran2lem2} 
and Proposition~\ref{shonecomp}, 
we get that $J_{\g }(f)$ is a Jordan curve. 
Combining it with 
 that $J_{\alpha }(f)$ is a quasicircle, 
 it follows that $J_{\g }(f)$ is a quasicircle. 

Next, let 
$\mu := \frac{1}{3}
\min \{ |b-c| \mid b\in J(G),\ c\in P^{\ast }(G)\} (>0).$ 
Applying Lemma~\ref{060416lemast} to 
$(f, \mu )$ above, 
let $\delta $ be the number in the statement of 
Lemma~\ref{060416lemast}. 
We set $\epsilon := \min \{ \delta ,\mu \} $ and 
$\rho := \beta .$  
Applying Lemma~\ref{mainthranlem2} to $(\G ,\alpha ,\rho ,\epsilon )$ 
above, let 
$(n,\theta _{1},\theta _{2},\omega )$ be the element 
in the statement of Lemma~\ref{mainthranlem2}. 
Let 
${\cal U}:= 
\{ \g \in \GN \mid \exists \{ m_{j}\} _{j\in \NN }, 
\exists \{ n_{k}\} _{k\in \NN },  
\sigma ^{m_{j}}(\g )\rightarrow \alpha , 
\sigma ^{n_{k}}(\g )\rightarrow \omega \} .$ 
Then, combining the statement of Lemma~\ref{060416lemast} 
and that of Lemma~\ref{mainthranlem2}, 
it follows that for any $\g \in {\cal U}$, 
$J_{\g }(f)$ is not a quasicircle. 
Moreover, by Lemma~\ref{mainthran2lem3}, 
we get that for any $\g \in {\cal U}$, 
$J_{\g }(f)$ is a Jordan curve. 
Combining the above argument, \cite[Theorem 1.12]{S4}, 
Lemma~\ref{fibconnlem}, and \cite[Theorem 9.3]{NV}, 
we see that for any $\g \in {\cal U}$, 
$A_{\g }(f)$ is a John domain, and the bounded component 
$U_{\g }$ of $F_{\g }(f)$ is not a John domain. 
Furthermore, 
it is easy to see that 
${\cal U}$ is residual in $\GN $, and that 
for any 
Borel probability measure $\tau $ on 
Poly$_{\deg \geq 2}$ with $\G _{\tau }=\G $, 
$\tilde{\tau }({\cal U})=1.$ 
Thus, we have proved Theorem~\ref{mainthran2}. 
%
%
\qed 
\begin{rem}
Using the above method (especially, using 
Lemmas~\ref{060416lemast} and \ref{mainthran2lem3}), 
we can also construct an example of a polynomial skew product $f:\CC ^{2}\rightarrow \CC ^{2}, 
f(z,w)=(p(z), q_{z}(w))$, where $p:\CC \rightarrow \CC $ is a polynomial with $\deg (p)\geq 2$, $q_{z}: \CC \rightarrow 
\CC $ is a monic polynomial with $\deg (q_{z})\geq 2$ for each $z\in \CC $, and $(z,w)\rightarrow q_{z}(w)$ is a polynomial of 
$(z,w)$,  
such that all of the following hold: 
\begin{itemize}
\item[(I)] $f:\CC^{2}\rightarrow \CC^{2}$ satisfies the Axiom A; 
\item[(II)] for each $z\in J(p)$, the fiberwise Julia set $J_{z}(f)$ is connected; and  
\item[(III)] for almost every $z\in J(p)$ with respect to the maximal entropy measure of $p:\CC \rightarrow \CC $, 
the fiberwise Julia set $J_{z}(f)$ is a Jordan curve but not a quasicircle,  the fiberwise basin 
$A_{z}(f)$ of $\infty $ is a John domain, and the bounded component of $F_{z}(f)$ is not a John domain.  
\end{itemize}
More precisely, for any $R,\epsilon >0$ and $n\in \NN $, 
let $p_{R}(z):=z^{2}-R, p:= p_{R}^{n}, h_{\epsilon }(w):= (w-\epsilon )^{2}-1+\epsilon $ and define 
$t_{n,\epsilon }(w)$ by $h_{\epsilon }^{n}(w)=w^{2^{n}}+t_{n,\epsilon }(w).$ 
For appropriate choice of $\epsilon $ small, and $R,n$ large with $n$ even, 
the map 
$f(z,w)=(p(z), w^{2^{n}}+\frac{z+\sqrt{R}}{2\sqrt{R}}t_{n,\epsilon }(w))$ satisfies 
(I)(II)(III) above. 

To explain the proof, note that 
$J(p)$ is contained in the union of two disks $D=D(\sqrt{R},r)$ and $-D$, 
for an $r>0$ such that $r/\sqrt{R}\rightarrow 0$ as $R\rightarrow \infty .$ 
Let $\epsilon >0$  be small. Let $n$ be large and let  $g(w)=w^{2^{n}}.$ 
Then there exists an open disk $B_{1}$ around $-1+\epsilon $ and an open disk $B_{2}$ with 
$\{ 0,\epsilon \} \subset B_{2}$ such that 
$h_{\epsilon }^{n}(B_{1})$ is a relative compact subset of $B_{1}$, 
$h_{\epsilon }^{n}(B_{2})$ is a relative compact subset of $B_{2}$, 
and $g(B_{1}\cup B_{2})$ is a relative compact subset of $B_{2}.$ Let $R$ be so large. Then 
there exists a compact subset $B_{1}'$ of $B_{1}$ and a compact subset $B_{2}'$ of $B_{2}$ such that 
\begin{itemize}
\item[(i)] for each $z\in D$, $q_{z}(B_{1})\subset B_{1}'$ and $q_{z}(B_{2})\subset B_{2}'$ and each 
finite critical value of $q_{z}$ is included in $B_{1}'\cup B_{2}'$, and 
\item[(ii)] for each $z\in -D$, $q_{z}(B_{1}\cup B_{2})\subset B_{2}'$ and each finite critical value of $q_{z}$ is included 
in $B_{2}'.$ 
\end{itemize}
From (i) (ii) and Lemmas~\ref{060416lemast} and \ref{mainthran2lem3}, it is easy to see that $f:\CC^{2}\rightarrow \CC^{2}$ satisfies (I)(II)(III) above.  

This example from the author of this paper has been announced in \cite[Example 5.10]{DH} as ``Sumi's example.'' 
For the related topics of Axiom A polynomial skew products on $\CC ^{2}$, see \cite{DH}. 
Note that statement (2) in \cite[Example 5.10]{DH}, which was added by the authors of \cite{DH} to the original 
example from the author of this paper, is unfortunately false. 
More precisely, the author of this paper found that the above example gives a 
counterexample to \cite[Lemma 3.5, Theorem 5.2, Corollary 5.3]{DH}. This matter will be reported in \cite{DH2}. 
\end{rem}

We now demonstrate Proposition~\ref{ranprop1}.\\ 
{\bf Proof of Proposition~\ref{ranprop1}:}
Since $P^{\ast }(G)\subset $ int$(\hat{K}(G))\subset F(G)$, 
$G$ is hyperbolic.  
Let $\g \in \GN $ be any element. We will show the following claim.\\ 
Claim: int$(K_{\g }(f))$ is a non-empty connected set. 

 To show this claim, since $G$ is hyperbolic, int$(K_{\g }(f))$ is non-empty. 
Suppose that there exist two distinct connected components 
$W_{1}$ and $W_{2}$ of int$(K_{\g }(f)).$ 
Since $P^{\ast }(G)$ is included in a 
connected component $U$ of int$(\hat{K}(G))$ $
\subset F(G)$, 
\cite[Corollary 2.7]{S4} implies that 
there exists an $n\in \NN $ such that 
$P^{\ast }(G)\subset f_{\g ,n}(W_{1})=f_{\g ,n}(W_{2}).$ 
Let $W:= f_{\g ,n}(W_{1})=f_{\g ,n}(W_{2}).$ Then, 
any critical value of $f_{\g ,n}$ in $\CC $ is included in  $W.$ 
Using the method in the proof of Lemma~\ref{mainthran2lem3}, 
we see that $(f_{\g ,n})^{-1}(W)$ is connected. However, 
this is a contradiction, since $W_{1}\neq W_{2}.$ Hence, we have 
proved the above claim. 

 From Claim above and Theorem~\ref{hypskewqc}, 
 it follows that there exists a constant $K\geq 1 $ such that 
 for each $\g \in \GN $, 
 $J_{\g }(f)$ is a $K$-quasicircle. 

 Hence, we have proved Proposition~\ref{ranprop1}. 
\qed

\section{Construction of examples}
\label{Const}
We present a way 
to construct examples of semigroups $G$ in ${\cal G}_{dis}.$ 
\begin{lem}[\cite{SdpbpI}]
\label{Constprop}
Let $G$ be a 
polynomial semigroup generated by 
a compact subset $\G $ of {\em Poly}$_{\deg \geq 2}.$ 
Suppose that $G\in {\cal G}$ and  
{\em int}$(\hat{K}(G))\neq \emptyset .$ 
Let $b\in $ {\em int}$(\hat{K}(G)).$ 
Moreover, let $d\in \NN $ be any positive integer such that 
$d\geq 2$, and such that 
$(d, \deg (h))\neq (2,2)$ for each $h\in \G .$ 
Then, there exists a number $c>0$ such that 
for each $a\in \CC $ with $0<|a|<c$, 
there exists a compact neighborhood $V$ of 
$g_{a}(z)=a(z-b)^{d}+b$ in {\em Poly}$_{\deg \geq 2}$ satisfying 
that for any non-empty subset $V'$ of $V$,  
the polynomial semigroup 
$\langle \G \cup  V'\rangle $ generated by the family $\G \cup V'$ 
belongs to ${\cal G}_{dis}$ and $\hat{K}(\langle \G\cup V'\rangle)=\hat{K}(G).$  
Moreover, in addition to the assumption above, 
if $G$ is semi-hyperbolic (resp. hyperbolic), 
then the above $\langle \G \cup V'\rangle $ is semi-hyperbolic (resp. hyperbolic).   
\end{lem}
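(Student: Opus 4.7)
The plan is to proceed in three main steps, plus a short addendum for the (semi-)hyperbolic statement. First, I would establish forward invariance of $\hat{K}(G)$ under $g_{a}$ and under all nearby maps. Since $b\in $ int$(\hat{K}(G))$, choose $r>0$ with $\overline{D(b,r)}\subset $ int$(\hat{K}(G))$ and let $M:=\sup _{z\in \hat{K}(G)}|z-b|$. The explicit formula $g_{a}(z)-b=a(z-b)^{d}$ gives $|g_{a}(z)-b|\leq |a|M^{d}<r$ on $\hat{K}(G)$ whenever $|a|<r/M^{d}$, so $g_{a}(\hat{K}(G))\subset $ int$(\hat{K}(G))$. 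Since $CV^{\ast }(g_{a})=\{b\}\subset $ int$(\hat{K}(G))$ and both conditions are open in the parameter, they persist on a compact neighborhood $V\subset $ {\em Poly}$_{\deg \geq 2}$ of $g_{a}$ on which additionally $\deg (\tilde g)=d$ throughout.

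Second, I would set $H:=\langle \G \cup V'\rangle $ for any nonempty $V'\subset V$ and check that $H\in {\cal G}$ with $\hat{K}(H)=\hat{K}(G)$. Each $\g \in \G $ automatically satisfies $\g (\hat{K}(G))\subset \hat{K}(G)$, so, combined with Step 1, every generator of $H$ preserves $\hat{K}(G)$, giving $\hat{K}(G)\subset \hat{K}(H)$; the reverse inclusion is immediate from $G\subset H$. Using Remark~\ref{pcbrem} together with $CV^{\ast }(\G \cup V')\subset \hat{K}(G)$, the invariance of $\hat{K}(G)$ under all generators forces $P^{\ast }(H)\subset \hat{K}(G)$, so $H\in {\cal G}$.

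Third, and this is the main obstacle, I would show $J(H)$ is disconnected. The geometric picture is that for small $|a|$ the filled-in Julia set $K(g_{a})=\overline{D(b,|a|^{-1/(d-1)})}$ is an enormous disk, so $J(g_{a})=\partial K(g_{a})\subset J(H)$ is a huge circle sitting very far from the small-scale set $J(G)\subset \overline{D(b,M)}\subset J(H)$. Following the strategy used in Example~\ref{jbnqexfirst}, I would construct a compact set $K\subset \CC $ with at least three points, having at least two connected components --- one containing $J(G)$ and one an annulus around the invariant circle $|z-b|=|a|^{-1/(d-1)}$ capturing $J(g_{a})$ --- such that $h^{-1}(K)\subset K$ for every generator $h\in \G \cup V'$. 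Then Lemma~\ref{hmslem}-\ref{backmin} forces $J(H)\subset K$, which gives the disconnection and hence $H\in {\cal G}_{dis}$. Verifying backward invariance uses the explicit form $g_{a}(z)-b=a(z-b)^{d}$ together with the uniformity coming from compactness of $\G $, and it is exactly here that the hypothesis $(d,\deg (h))\neq (2,2)$ enters: a Riemann--Hurwitz style count (in the spirit of Lemma~\ref{mainthran2lem1}) provides just enough room for the pullbacks $\g ^{-1}(K)$ and $\tilde g^{-1}(K)$ to be placed in separate components; if $d=\deg (h)=2$ the count degenerates and the separation fails. This geometric/combinatorial verification is the technical heart of the argument.

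For the (semi-)hyperbolic addendum: $g_{a}$ has its unique finite critical point at the super-attracting fixed point $b$, so $P(\langle g_{a}\rangle )\cap \CC =\{b\}\subset $ int$(\hat{K}(G))$. If $G$ is hyperbolic then $P(G)\subset F(G)$, and combined with Step 2 one obtains $P(H)\cap \CC \subset \hat{K}(G)\subset F(G)$; a standard perturbation/continuity argument (shrinking $V$ if necessary) then gives $F(G)\subset F(H)$ in a neighborhood of the postcritical orbit, yielding $P(H)\subset F(H)$ and hence $H$ hyperbolic. The semi-hyperbolic case is handled analogously, using the uniform-degree-of-branched-covering characterization together with the fact that adjoining $V'$, whose elements' postcritical orbits lie deep inside the Fatou set, cannot destroy the semi-hyperbolic property.
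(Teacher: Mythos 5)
Your Steps 1 and 2 (forward invariance of $\hat{K}(G)$ under $g_{a}$ and all nearby maps, the identity $\hat{K}(\langle \G \cup V'\rangle )=\hat{K}(G)$, and membership in ${\cal G}$ via Remark~\ref{pcbrem}) are correct and match the paper. The genuine gap is in Step 3, which you yourself flag as "the technical heart" and then do not carry out; moreover the mechanism you point to is the wrong one. The hypothesis $(d,\deg (h))\neq (2,2)$ does not enter through a Riemann--Hurwitz count. After normalizing $b=0$, the paper sets $D_{a}:=\overline{D(0,2(1/|a|)^{1/(d-1)})}\supset K(g_{a})$ and needs the containment $h^{-1}(D_{a})\subset g_{a}^{-1}(D(0,r))$ for every $h\in \G $, i.e.
$(r/|a|)^{1/d}>2\bigl(\tfrac{2}{|a_{h}|}(1/|a|)^{1/(d-1)}\bigr)^{1/d_{h}}$ where $a_{h}$ is the leading coefficient and $d_{h}=\deg (h)$. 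The left side grows like $|a|^{-1/d}$ and the right like $|a|^{-1/((d-1)d_{h})}$ as $|a|\to 0$, so the inequality holds for all small $|a|$ exactly when $(d-1)d_{h}>d$, i.e. $d+d_{h}<d\,d_{h}$, which for $d,d_{h}\geq 2$ is precisely $(d,d_{h})\neq (2,2)$; when $d=d_{h}=2$ the two radii have the same order in $|a|$ and no choice of small $|a|$ rescues the construction. Once this containment is in hand, the open region $U$ between $\bigcup _{h\in \G }h^{-1}(D_{a})\cup \hat{K}(G)$ and $\partial \bigl(\bigcap _{g\in V}g^{-1}(D(0,r))\bigr)$ is mapped by each $h\in \G $ into $\CCI \setminus D_{a}$ and by each $g\in V$ into int$(\hat{K}(G))$, both of which lie in $F(\langle \G \cup V\rangle )$; by Lemma~\ref{hmslem}-\ref{bss} this forces $U\subset F(\langle \G \cup V'\rangle )$, and an annulus in $U$ separates $J(g)\subset J(\langle \G \cup V'\rangle )$ from $\bigcup _{h\in \G }h^{-1}(J(g))\subset J(\langle \G \cup V'\rangle ).$ Your alternative of a disconnected backward-invariant compact set $K$ plus Lemma~\ref{hmslem}-\ref{backmin} is workable in principle, but it requires exactly this quantitative estimate to place the two components, and note that the component containing $J(G)$ cannot simply be a disk: it must absorb $g_{a}^{-1}$ of itself, which is why the paper works with the region cut out by $\bigcap _{g\in V}g^{-1}(D(0,r))$ rather than with round disks.

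Two further problems in the addendum. First, the inclusion $\hat{K}(G)\subset F(G)$ is false in general: for $G=\langle z^{3},z^{2}/4\rangle $ one has $\{ |z|=4\} =J(z^{2}/4)\subset J(G)$, while points on that circle escape to $\infty $ under $z^{3}$, so they are not even in $\hat{K}(G)$; the correct statements, which the paper uses, are $P^{\ast }(\langle \G \cup V'\rangle )\subset \hat{K}(G)$ and $\hat{K}(G)\cap F(G)=\mbox{int}(\hat{K}(G)).$ Second, the semi-hyperbolic case is not "handled analogously" by a soft perturbation remark: an arbitrary element of $\langle \G \cup V'\rangle $ need not factor through $G$, and the paper must decompose $\zeta =h\circ \alpha \circ \beta $ with $h\in G\cup \{ Id\} $, $\alpha \in V'$, $\beta \in \langle \G \cup V'\rangle \cup \{ Id\} $, and control degrees of inverse branches over three nested scales $\delta ,\delta _{1},\delta _{2}$ together with a diameter estimate from \cite[Lemma 1.10]{S1}, finally obtaining hyperbolicity as the case $N=1$ of that argument. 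As written, your proposal asserts these conclusions rather than proving them.
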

\begin{proof}
We follow the proof in \cite{SdpbpI}. 
Conjugating $G$ by $z\mapsto z+b$, we may assume that $b=0.$ 
For each $h\in \G$, let $a_{h}$ be the coefficient of 
the highest degree term of $h$  
and let $d_{h}:=\deg (h).$ Let $r>0$ be a number 
such that $\overline{D(0,r)}\subset 
$ int$(\hat{K}(G)).$ 

Let $h\in \G $ and let $\alpha >0$ be a number. 
Since $d\geq 2$ and $(d,d_{h})\neq (2,2)$, 
it is easy to see that 
$(\frac{r}{\alpha })^{\frac{1}{d}}>
2\left(\frac{2}{|a_{h}|}(\frac{1}{\alpha })
^{\frac{1}{d-1}}\right)^{\frac{1}{d_{h}}}
$ if and only if 
\begin{equation}
\label{Contproppfeq1}
\log \alpha <
\frac{d(d-1)d_{h}}{d+d_{h}-d_{h}d}
( \log 2-\frac{1}{d_{h}}\log \frac{|a_{h}|}{2}-\frac{1}{d}\log r) .
\end{equation} 
We set 
\begin{equation}
\label{Contproppfeq2}
c_{0}:=\min _{h\in \G }\exp \left(\frac{d(d-1)d_{h}}{d+d_{h}-d_{h}d}
( \log 2-\frac{1}{d_{h}}\log \frac{|a_{h}|}{2}-\frac{1}{d}\log r) \right)
\in (0,\infty ).
\end{equation}
Let $0<c<c_{0}$ be a small number and let $a\in \CC $ 
be a number with $0<|a|<c.$ 
Let $g_{a}(z)=az^{d}.$ 
Then, we obtain $K(g_{a})=\{ z\in \CC \mid 
|z|\leq (\frac{1}{|a|})^{\frac{1}{d-1}}\} $ and 
$g_{a}^{-1}(\{ z\in \CC \mid  |z|=r\} )=
\{ z\in \CC \mid |z|=(\frac{r}{|a|})^{\frac{1}{d}}\} .$
Let 
$D_{a}:=\overline{D(0,2(\frac{1}{|a|})^{\frac{1}{d-1}})}.$ 
Since $h(z)=a_{h}z^{d_{h}}(1+o(1))\ (z\rightarrow \infty )$ uniformly on 
$\G $, 
it follows that if $c$ is small enough, then 
for any $a\in \CC $ with $0<|a|<c$ and for any $h\in \G $, 
$h^{-1}(D_{a})\subset 
\left\{ z\in \CC \mid 
|z|\leq 2\left( \frac{2}{|a_{h}|}(\frac{1}{|a|})^{\frac{1}{d-1}}\right) 
^{\frac{1}{d_{h}}}\right\} .$  
This implies that for each $h\in \G $, 
\begin{equation}
\label{Contproppfeq3}
h^{-1}(D_{a})\subset g_{a}^{-1}(\{ z\in \CC \mid |z|<r\} ).
\end{equation} 
Moreover, if $c$ is small enough, then for any $a\in \CC $ with 
$0<|a|<c$ and any $h\in \G $,  
\begin{equation}
\label{Contproppfeq4}
\hat{K}(G)\subset g_{a}^{-1}(\{ z\in \CC \mid |z|<r\} ),\ 
\overline{h(\CCI \setminus D_{a})}\subset 
\CCI \setminus D_{a}.
\end{equation}
Let $a\in \CC $ with $0<|a|<c.$  
By (\ref{Contproppfeq3}) and (\ref{Contproppfeq4}), 
there exists a compact neighborhood $V$ of $g_{a}$ in Poly$_{\deg \geq 2}$,  
such that 
\begin{equation}
\label{Contproppfeq5} 
\hat{K}(G)\cup \bigcup _{h\in \G }h^{-1}
(D_{a})
\subset 
\mbox{int}\left( \bigcap _{g\in V}
g^{-1}(\{ z\in \CC \mid |z|<r\} )\right), \mbox{ and } 
\end{equation}
\begin{equation}
\label{Contproppfeq5-a}
\bigcup _{h\in \G \cup V}
\overline{h(\CCI \setminus D_{a})}
\subset \CCI \setminus D_{a},
\end{equation}
which implies that 
\begin{equation}
\label{Contproppfeq5-1}
\mbox{int}(\hat{K}(G))\cup 
(\CCI \setminus D_{a})
\subset F(\langle \G \cup V\rangle ). 
\end{equation}

 By (\ref{Contproppfeq5}), we obtain that for any non-empty subset 
 $V'$ of $V$, 
\begin{equation}
\label{Contproppfeq7}
\hat{K}(G)=\hat{K}(\langle \G \cup  V'\rangle  ). 
\end{equation} 
If the compact neighborhood $V$ of $g_{a}$ is so small, then  
\begin{equation}
\label{Contproppfeq8}
\bigcup _{g\in V} CV^{\ast }(g)  \subset  \mbox{int}(\hat{K}(G)).
\end{equation} 
Since $P^{\ast }(G)\subset \hat{K}(G)$, 
combining it with (\ref{Contproppfeq7}) and (\ref{Contproppfeq8}),
we get  
that for any non-empty subset $V'$ of $V$, 
$P^{\ast }(\langle \G \cup V'\rangle  )\subset 
\hat{K}(\langle \G \cup V'\rangle  ).$ 
 Therefore, for any non-empty subset $V'$ of $V$,  
$\langle \G \cup V'\rangle  \in {\cal G}.$ 

We now show that for any non-empty subset $V'$ of $V$, $J(\langle \G \cup V'\rangle )$ is disconnected.  
Let $$U:=\left(\mbox{int}(\bigcap _{g\in V}g^{-1}(\{ z\in \CC \mid |z|<r\} ))\right)
\setminus 
\bigcup _{h\in \G }h^{-1}(D_{a}).$$ 
Then, for any $h\in \G $, 
\begin{equation}
\label{Contproppfeq6}
h(U)\subset \CCI \setminus D_{a}.
\end{equation} 
Moreover, for any $g\in V$, $g(U)\subset 
$ int$(\hat{K}(G)).$ 
Combining it with (\ref{Contproppfeq5-1}), 
(\ref{Contproppfeq6}), and Lemma~\ref{hmslem}-\ref{bss}, 
it follows that $U\subset 
F(\langle \G \cup V\rangle  ).$ 
If the neighborhood $V$ of $g_{a}$ is  so small, then 
there exists an annulus $A$ in $U$ such that for any $g\in V$,  
$A$ separates $J(g)$ and $\bigcup _{h\in \G }h^{-1}(J(g)).$  
Hence, it follows that for any non-empty subset $V'$ of $V$, 
the polynomial semigroup $\langle \G \cup  V'\rangle $ generated by 
the family $\G \cup V'$ satisfies that $J(\langle \G \cup V'\rangle )$ is 
disconnected.  

 We now suppose that in addition to the assumption, 
 $G$ is semi-hyperbolic. Let $V'$ be any non-empty subset of 
 $V .$ Since $G$ is semi-hyperbolic, 
$UH(G)\cap \CC \subset P^{\ast }(G)\cap F(G)\subset \hat{K}(G)\cap F(G)=\mbox{int}(\hat{K}(G)).$ 
Moreover, by (\ref{Contproppfeq5-1}), 
$J(\langle \G \cup  V'\rangle )\subset \CCI \setminus \left((\CCI \setminus D_{a})\cup \mbox{int}(\hat{K}(G))\right).$ 
Therefore, there exists a positive integer $N$ and a positive number $\delta $ 
such that for each $z\in J(\langle \G \cup  V'\rangle )$ and each $h\in G$, we have 
\begin{equation}
\label{eq:lesn}
\deg (h: W\rightarrow D(z,\delta ))\leq N
\end{equation}
for each connected component $W$ of $h^{-1}(D(z,\delta )).$ 
Since $P^{\ast }(\langle \G \cup  V'\rangle )\subset \hat{K}(\langle \G \cup V'\rangle )=\hat{K}(G)$, 
(\ref{Contproppfeq5}) 
implies that 
there exists a positive number $\delta _{1}$ such that for each 
$z\in \bigcup _{g\in V'}g^{-1}(J(\langle \G \cup V'\rangle))$ and each $\beta \in \langle \G \cup V'\rangle $, 
\begin{equation}
\label{eq:beta1}
\deg (\beta :B \rightarrow D(z,\delta _{1}))=1, 
\end{equation}
for each connected component $B$ of $\beta ^{-1}(D(z,\delta _{1})).$ 
By (\ref{Contproppfeq8}), there exists a positive number $\delta _{2}$ such that 
for each $z\in J(\langle \G \cup V'\rangle )$ and each $\alpha \in V'$, 
\begin{equation}
\label{eq:alpha1}
\mbox{diam}\ Q\leq \delta _{1}, \deg (\alpha :Q\rightarrow D(z,\delta _{2}))=1
\end{equation}
for each connected component $Q$ of $\alpha ^{-1}(D(z,\delta _{2})).$ 
Furthermore, by (\ref{eq:lesn}) and \cite[Lemma 1.10]{S1} (or \cite{S2}), 
there exists a constant
 $0<c<1$ such that for each $z\in J(\langle \Gamma \cup V'\rangle )$ and each $h\in G$, 
\begin{equation}
\label{eq:diamS}
\mbox{diam }S\leq \delta _{2}, 
\end{equation} 
for each connected component $S$ of $h^{-1}(D(z,c\delta )).$  
Let $\zeta \in \langle \G \cup V'\rangle $ be any element. 
If $\zeta \in G$, then by (\ref{eq:lesn}), 
for each $z\in J(\langle \G \cup V'\rangle )$, 
we have $\deg (\zeta : W\rightarrow D(z,c\delta ))\leq N$, 
for each connected component $W$ of $h^{-1}(D(z,c\delta )).$ 
If $\zeta $ is of the form $\zeta =h\circ \alpha \circ \beta $, where 
$h\in G\cup \{ Id\} $, $\alpha \in V'$, and $\beta \in \langle \G \cup V'\rangle \cup \{ Id\} $, 
then combining (\ref{eq:lesn}), (\ref{eq:beta1}), and (\ref{eq:alpha1}), 
we get that for each $z\in J(\langle \G \cup  V'\rangle )$, 
$\deg (\zeta :Q\rightarrow D(z,c\delta ))\leq N$ 
for each connected component $Q$ of $\zeta ^{-1}(D(z,c\delta )).$ 
Therefore, $J(\langle \G \cup V'\rangle )\subset SH_{N}(\langle \G \cup V'\rangle )$ and 
$\langle \G \cup V'\rangle $ is semi-hyperbolic. 

 We now suppose that in addition to the assumption, 
$G$ is hyperbolic.  Let $V'$ be any non-empty subset of 
$V.$ By the above argument with $N=1$, 
we obtain that $\langle \G \cup V'\rangle $ is hyperbolic. 

 Thus, we have proved Lemma~\ref{Constprop}. 
\end{proof}
\begin{lem}[\cite{SdpbpI}] 
\label{shshfinprop}
Let $m\geq 2$ and let $d_{2},\ldots ,d_{m}\in \NN $ be such that  
$d_{j}\geq 2$ for each $j=2,\ldots ,m.$ Let 
$h_{1}\in {\cal Y}_{d_{1}}$ with {\em int}$(K(h_{1}))\neq \emptyset $ be  
such that $\langle h_{1}\rangle \in {\cal G}.$ 
Let $b_{2},b_{3},\ldots ,b_{m}\in $ {\em int}$(K(h_{1})).$ 
Then, all of the following statements hold. 
\begin{enumerate}
\item \label{shshfinprop1}
Suppose that $\langle h_{1}\rangle $ is 
semi-hyperbolic (resp. hyperbolic). 
Then, there exists a number $c>0$ such that 
for each $(a_{2},a_{3},\ldots ,a_{m})\in \CC ^{m-1}$ with 
$0<|a_{j}|<c$ ($j=2,\ldots ,m$), 
setting $h_{j}(z)=a_{j}(z-b_{j})^{d_{j}}+b_{j}$ ($j=2,\ldots ,m$), 
the polynomial semigroup 
$G=\langle h_{1},\ldots ,h_{m}\rangle $ satisfies that 
$G\in {\cal G}$, $\hat{K}(G)=K(h_{1})$ and $G$ is semi-hyperbolic (resp. hyperbolic). 
\item \label{shshfinprop2}
Suppose that $\langle h_{1}\rangle $ is 
semi-hyperbolic (resp. hyperbolic). Suppose also that 
either (i) there exists a $j\geq 2$ with $d_{j}\geq 3$, or 
(ii) $\deg(h_{1})=3$, $b_{2}=\cdots =b_{m}.$ Then, there exist 
$a_{2},a_{3},\ldots ,a_{m}>0$ such that setting 
$h_{j}(z)=a_{j}(z-b_{j})^{d_{j}}+b_{j}$ ($j=2,\ldots ,m$), 
the polynomial semigroup $G=\langle h_{1},h_{2},\ldots ,h_{m}\rangle $ 
satisfies that $G\in {\cal G}_{dis}$, $\hat{K}(G)=K(h_{1})$ and 
$G$ is semi-hyperbolic (resp. hyperbolic).  

\end{enumerate} 
\end{lem}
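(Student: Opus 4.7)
To prove Part 1, I proceed by direct estimation. For $z\in K(h_{1})$ (a bounded subset of $\CC$), we have $|h_{j}(z)-b_{j}|=|a_{j}||z-b_{j}|^{d_{j}}$, which can be made arbitrarily small by shrinking $|a_{j}|$, uniformly in the remaining coefficients. Hence for $|a_{j}|$ sufficiently small, each $h_{j}$ ($j\geq 2$) maps $K(h_{1})$ into a small neighborhood of $b_{j}$ contained in int$(K(h_{1}))$. Consequently every $g\in G$ maps $K(h_{1})$ into $K(h_{1})$, giving $K(h_{1})\subset\hat{K}(G)$; the reverse inclusion is immediate from $h_{1}\in G$, so $\hat{K}(G)=K(h_{1})$. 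Since $CV^{\ast}(h_{j})=\{b_{j}\}\subset$ int$(K(h_{1}))$ for $j\geq 2$ and $CV^{\ast}(h_{1})\subset K(h_{1})$, Remark~\ref{pcbrem} then yields $P^{\ast}(G)\subset K(h_{1})$, so $G\in{\cal G}$. Note also that int$(K(h_{1}))$ is forward-invariant under $G$ and bounded, hence included in $F(G)$.

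For semi-hyperbolicity (resp. hyperbolicity) of $G$ in Part 1, I adapt the branching argument at the end of the proof of Lemma~\ref{Constprop}. Any $\zeta\in G$ is either a pure $h_{1}$-word (controlled by the assumed semi-hyperbolicity of $\langle h_{1}\rangle$) or factors as $\zeta=h\circ h_{j}\circ\beta$ with $j\geq 2$ and $h,\beta\in G\cup\{\text{Id}\}$; near $J(G)$, preimages under $h_{j}$ of small disks are univalent because $CV^{\ast}(h_{j})=\{b_{j}\}\subset$ int$(K(h_{1}))\subset F(G)$ sits at positive distance from $J(G)$, so the total branching of $\zeta$ is bounded by that of the $h_{1}$-factors, which in turn is controlled by the semi-hyperbolicity of $\langle h_{1}\rangle$. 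The hyperbolic case follows by taking $N=1$.

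For Part 2(i), assume without loss of generality $d_{2}\geq 3$. First apply Part 1 to the reduced family $\{h_{1},h_{3},\ldots,h_{m}\}$ to pick $a_{3},\ldots,a_{m}$ small enough that $G':=\langle h_{1},h_{3},\ldots,h_{m}\rangle\in{\cal G}$ with $\hat{K}(G')=K(h_{1})$ and $G'$ semi-hyperbolic (resp. hyperbolic). Now apply Lemma~\ref{Constprop} to $G'$ with generating family $\Gamma':=\{h_{1},h_{3},\ldots,h_{m}\}$, base point $b:=b_{2}\in$ int$(\hat{K}(G'))$, and exponent $d:=d_{2}\geq 3$; the hypothesis $(d_{2},\deg h)\neq(2,2)$ for every $h\in\Gamma'$ is automatic since $d_{2}\geq 3$. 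The lemma delivers $c>0$ such that for any $0<a_{2}<c$, setting $h_{2}(z)=a_{2}(z-b_{2})^{d_{2}}+b_{2}$ makes $G=\langle h_{1},\ldots,h_{m}\rangle\in{\cal G}_{dis}$ with $\hat{K}(G)=K(h_{1})$ and semi-hyperbolic (resp. hyperbolic). For Part 2(ii), if some $d_{j}\geq 3$ invoke case (i); otherwise every $d_{j}=2$. Apply Lemma~\ref{Constprop} directly to $\{h_{1}\}$ with the common base point $b=b_{2}=\cdots=b_{m}$ and $d=2$; the condition $(2,\deg h_{1})=(2,3)\neq(2,2)$ holds. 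Fixing a small $a_{2}>0$ with $a_{2}<c$ and letting $V$ be the compact neighborhood of $g_{a_{2}}(z)=a_{2}(z-b)^{2}+b$ furnished by the lemma, one then chooses $a_{3},\ldots,a_{m}>0$ close enough to $a_{2}$ that each $h_{j}(z)=a_{j}(z-b)^{2}+b$ lies in $V$; taking $V':=\{h_{2},\ldots,h_{m}\}\subset V$, the conclusion of Lemma~\ref{Constprop} applied to $V'$ gives the desired properties.

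The main technical obstacle is the semi-hyperbolicity claim in Part 1: one must verify the $SH_{N}$ condition for every composition in $G$ using only the semi-hyperbolicity of $\langle h_{1}\rangle$ and the critical-value placement $CV^{\ast}(h_{j})\subset$ int$(K(h_{1}))$, without recourse to the specific estimates used in the disconnectedness argument of Lemma~\ref{Constprop}. The tactical observation underlying Part 2(i) is to add the highest-degree polynomial last, which circumvents the $(d,\deg h)=(2,2)$ obstruction in Lemma~\ref{Constprop} that would otherwise block a naive iterative application; in Part 2(ii), the coincidence $b_{2}=\cdots=b_{m}$ allows a single application of Lemma~\ref{Constprop} to absorb all new generators into one neighborhood $V$.
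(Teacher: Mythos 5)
Your proposal is correct and follows essentially the same route as the paper's own proof: the containment $h_{j}(K(h_{1}))\subset D(b_{j},r)\subset \mbox{int}(K(h_{1}))$ for small $|a_{j}|$ gives $\hat{K}(G)=K(h_{1})$, $P^{\ast }(G)\subset K(h_{1})$ and hence $G\in {\cal G}$; (semi-)hyperbolicity is inherited exactly by the branching decomposition $\zeta =h\circ h_{j}\circ \beta $ from the end of the proof of Lemma~\ref{Constprop} (the paper likewise just invokes ``the same method''); and Part 2 is obtained by adding the generator of degree $\geq 3$ (resp.\ the quadratic generator at the common base point when $\deg (h_{1})=3$) last via Lemma~\ref{Constprop}, which is precisely how the paper circumvents the $(d,\deg h)=(2,2)$ obstruction. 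The only cosmetic difference is in case (ii), where the paper takes all $a_{j}$ equal so that $G=\langle h_{1},h_{2}\rangle $, whereas you place distinct maps $h_{2},\ldots ,h_{m}$ inside the compact neighborhood $V$ and use the ``any non-empty subset $V'$ of $V$'' clause of Lemma~\ref{Constprop}; both are valid.
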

\begin{proof} 
We will follow the proof in \cite{SdpbpI}. 
First, we show \ref{shshfinprop1}. 
Let $r>0$ be a number such that 
$D(b_{j},2r)\subset \mbox{int}(K(h_{1}))$ for each 
$j=1,\ldots ,m.$ 
If we take $c>0$ so small, then 
for each $(a_{2},\ldots ,a_{m})\in \CC ^{m-1}$ 
such that $0<|a_{j}|<c$ for each $j=2,\ldots ,m$, 
setting $h_{j}(z)=a_{j}(z-b_{j})^{d_{j}}+b_{j}$ 
($j=2,\ldots ,m$), we have 
\begin{equation}
\label{shshfinpropeq1}
h_{j}(K(h_{1}))\subset D(b_{j},r)\subset 
\mbox{int}(K(h_{1}))\ (j=2,\ldots ,m). 
\end{equation} 
Hence, $K(h_{1})=\hat{K}(G)$, 
where $G=\langle h_{1},\ldots ,h_{m}\rangle .$ 
Moreover, by (\ref{shshfinpropeq1}), 
we have $P^{\ast }(G)\subset K(h_{1}).$ 
Hence, $G\in {\cal G}.$ 

 If $\langle h_{1}\rangle $ is semi-hyperbolic, 
then using the same method as that 
in the proof of Lemma~\ref{Constprop}, 
we obtain that $G$ is semi-hyperbolic. 

 We now suppose that $\langle h_{1}\rangle $ is hyperbolic. 
By (\ref{shshfinpropeq1}), we have 
$\bigcup _{j=2}^{m}CV^{\ast }(h_{j})\subset 
\mbox{int}(\hat{K}(G)).$ Combining it with 
the same method as that in the proof of Lemma~\ref{Constprop}, 
we obtain that $G$ is hyperbolic. 
Hence, we have proved statement \ref{shshfinprop1}. 

 We now show statement \ref{shshfinprop2}. 
Suppose we have case (i). 
We may assume $d_{m}\geq 3.$ 
Then, by statement \ref{shshfinprop1}, 
there exists an element $a>0$ such that 
setting $h_{j}(z)=a(z-b_{j})^{d_{j}}+b_{j}$ ($j=2,\ldots ,m-1$), 
$G_{0}=\langle h_{1},\ldots ,h_{m-1}\rangle $ satisfies 
that $G_{0}\in {\cal G}$ and $\hat{K}(G_{0})=$ $K(h_{1})$ 
and if $\langle h_{1}\rangle $ is semi-hyperbolic (resp. hyperbolic), 
then $G_{0}$ is semi-hyperbolic (resp. hyperbolic). 
Combining it with Lemma~\ref{Constprop}, 
it follows that there exists an $a_{m}>0$ such that 
setting $h_{m}(z)=a_{m}(z-b_{m})^{d_{m}}+b_{m}$, 
$G=\langle h_{1},\ldots ,h_{m}\rangle $ satisfies that 
$G\in {\cal G}_{dis}$ and $\hat{K}(G)=\hat{K}(G_{0})=K(h_{1})$ and if 
$G_{0}$ is semi-hyperbolic (resp. hyperbolic), 
then $G$ is semi-hyperbolic (resp. hyperbolic).  

 Suppose now we have case (ii) and $d_{j}=2$ for each $j\geq 2.$  
 Then by Lemma~\ref{Constprop}, 
 there exists an $a_{2}>0$ such that 
setting $h_{j}(z)=a_{2}(z-b_{j})^{2}+b_{j}$ $(j=2,\ldots ,m)$, 
$G=\langle h_{1},\ldots ,h_{m}\rangle =\langle h_{1}, h_{2}\rangle $ 
satisfies that $G\in {\cal G}_{dis}$ and $\hat{K}(G)=K(h_{1})$ and if $\langle h_{1}\rangle $ 
is semi-hyperbolic (resp. hyperbolic), then 
$G$ is semi-hyperbolic (resp. hyperbolic). 

 Thus, we have proved Lemma~\ref{shshfinprop}.  
\end{proof} 
\begin{df}
Let $\Omega $ be the space of all non-empty compact subsets of 
 Poly$_{\deg \geq 2}$ endowed with the Hausdorff topology. 
We set 
\begin{itemize}
\item 
${\cal H}:= \{ \Gamma \in \Omega \mid \langle \Gamma \rangle \mbox{ is hyperbolic}\} ,$
\item 
${\cal B}:= \{ \Gamma \in \Omega \mid \langle \Gamma \rangle \in {\cal G}\} $, and 
\item 
${\cal D}:= \{ \Gamma \in \Omega \mid J(\langle \Gamma \rangle ) \mbox{ is disconnected}\} .$
\end{itemize} 
\end{df}
\begin{lem}
\label{l:omegahopen}
The sets ${\cal H}, {\cal H}\cap {\cal B}$, ${\cal H}\cap {\cal D}, {\cal H}\cap {\cal B}\cap {\cal D}$ 
are open in $\Omega .$ Moreover, $\Gamma \mapsto J(\langle \Gamma \rangle )$ is 
continuous on ${\cal H} $, with respect to the Hausdorff topology in the space of all non-empty compact 
subsets of $\CC .$ 
\end{lem}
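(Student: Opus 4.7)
The plan is to build three ingredients: a compact, Hausdorff‑robust trapping region $K$ around $P(G_0)$ inside $F(G_0)$; a uniform escape region $B_R=\{|z|>R\}$ near $\infty$ valid over a $\mbox{Poly}_{\deg \geq 2}$‑neighborhood of $\Gamma _0\in{\cal H}$ in $\Omega $; and, for the continuity statement, Theorem~\ref{repdense} on the density of repelling cycles. Openness of the three intersections will then follow by combining openness of ${\cal H}$ with this continuity.

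\smallskip
\noindent\textbf{Openness of ${\cal H}$.} Fix $\Gamma _0\in{\cal H}$, so $G_0=\langle \Gamma _0\rangle $ satisfies $P(G_0)\subset F(G_0)$ and $h(P(G_0))\subset P(G_0)$ for every $h\in \Gamma _0$ (Remark~\ref{pcbrem}). Normality, hence equicontinuity, of $G_0$ on compacta of $F(G_0)$, together with compactness of $\Gamma _0$, should produce open sets $P(G_0)\subset U\subset \overline{U}\subset F(G_0)$ (with a slightly larger auxiliary $U'$) and a compact
\[
K \;:=\; \bigcap _{g\in G_0}g^{-1}(\overline{U})
\]
with $P(G_0)\subset \mbox{int}(K)$, $K\subset F(G_0)$, and $h(K)\subset \mbox{int}(K)$ for every $h\in \Gamma _0$. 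Uniform continuity then gives a Hausdorff neighborhood ${\cal N}$ of $\Gamma _0$ in $\Omega $ such that every $h'\in \Gamma \in{\cal N}$ still satisfies $h'(K)\subset \mbox{int}(K)$ and $CV^{\ast }(h')\subset \mbox{int}(K)$. Hence $K$ is $\langle \Gamma \rangle $‑forward invariant and absorbs all critical values of generators; applying Montel's theorem on $\mbox{int}(K)$ (using that $K$ omits three points of $\CCI $) yields $\mbox{int}(K)\subset F(\langle \Gamma \rangle )$, and therefore $P(\langle \Gamma \rangle )\subset K\subset F(\langle \Gamma \rangle )$, i.e., $\Gamma \in{\cal H}$.

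\smallskip
\noindent\textbf{Continuity of $\Gamma \mapsto J(\langle \Gamma \rangle )$ on ${\cal H}$.} Lower semicontinuity at any $\Gamma _0\in \Omega $ should follow from Theorem~\ref{repdense}: any $z\in J(G_0)$ is a limit of repelling fixed points of compositions $g_0=\gamma _{i_N}\circ \cdots \circ \gamma _{i_1}$ with $\gamma _{i_j}\in \Gamma _0$; the same composition pattern in $\Gamma $ close to $\Gamma _0$ gives $g\in \langle \Gamma \rangle $ close to $g_0$, and the implicit function theorem yields a nearby repelling fixed point of $g$ which lies in $J(\langle \Gamma \rangle )$ by Theorem~\ref{repdense} again. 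For upper semicontinuity on ${\cal H}$, given $\epsilon >0$ I will choose $R$ so large that $B_R=\{|z|>R\}$ is forward invariant under every polynomial in a $\mbox{Poly}_{\deg \geq 2}$‑neighborhood of $\Gamma _0$, and set $L=\CCI \setminus (B(J(G_0),\epsilon )\cup B_R)$, which is compact in $F(G_0)$. Equicontinuity of $G_0$ on $L$ combined with Lemma~\ref{constlimlem} (all constant subsequential limits of elements of $G_0$ lie in $P(G_0)$) should give an $N$ such that every $\Gamma _0$‑word of length $\geq N$ maps $L$ into $\mbox{int}(K)\cup B_R$. Uniform continuity transports this to $\Gamma $‑words for $\Gamma $ near $\Gamma _0$; together with forward invariance of $K$ and $B_R$ under $\langle \Gamma \rangle $ and Montel's theorem, this will give $L\subset F(\langle \Gamma \rangle )$, whence $J(\langle \Gamma \rangle )\subset B(J(G_0),\epsilon )$. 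Combined with lower semicontinuity, this is Hausdorff continuity.

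\smallskip
\noindent\textbf{Intersections and main obstacle.} For ${\cal H}\cap {\cal B}$ open: if additionally $P^{\ast }(G_0)$ is bounded in $\CC $, the set $U$ and hence $K$ can be chosen bounded, so $P^{\ast }(\langle \Gamma \rangle )\subset K$ is uniformly bounded on ${\cal N}$. For ${\cal H}\cap {\cal D}$ open: if $J(G_0)$ is disconnected, I will fix a Jordan curve $\xi \subset F(G_0)\cap \CC $ at positive distance from $J(G_0)$ such that both components of $\CCI \setminus \xi $ meet $J(G_0)$; continuity of $J$ on ${\cal H}$ gives $\xi \subset F(\langle \Gamma \rangle )$ for $\Gamma $ close to $\Gamma _0$, while lower semicontinuity ensures that both sides of $\xi $ still meet $J(\langle \Gamma \rangle )$, so $J(\langle \Gamma \rangle )$ is disconnected. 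Openness of ${\cal H}\cap {\cal B}\cap {\cal D}$ is then immediate. The hard part will be the uniform $N$‑step attraction statement $L\to \mbox{int}(K)\cup B_R$ and, in tandem, the construction of the trapping region $K$ together with its stability under Hausdorff perturbation of $\Gamma _0$: these are exactly where hyperbolicity (rather than some weaker assumption such as semi‑hyperbolicity) is indispensable, and where the bulk of the technical work lies.
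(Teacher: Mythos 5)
Your plan is correct and follows essentially the same route as the paper's proof: build a compact forward‑invariant trapping region around the postcritical set inside the Fatou set, observe that it (and the inclusion of the generators' critical values) persists under small Hausdorff perturbations of $\Gamma$, get lower semicontinuity of $\Gamma \mapsto J(\langle \Gamma \rangle )$ from Theorem~\ref{repdense} and upper semicontinuity from the trapping region, and then deduce openness of the intersections. The main difference is one of packaging: the paper compresses your entire ``hard part'' into a single citation, namely that combining \cite[Theorem 2.14(5)]{S1} with Lemma~\ref{invnormal} yields the uniform statement that every sufficiently long word maps a prescribed compact subset of $F(\langle \Gamma \rangle )$ into a prescribed neighborhood of $P(\langle \Gamma \rangle )$; your proposed derivation of this from equicontinuity plus Lemma~\ref{constlimlem} is incomplete as stated, because Lemma~\ref{constlimlem} only locates the \emph{constant} limit functions, and you still must exclude non‑constant locally uniform limits of sequences in $G_{0}$ with word length tending to infinity --- this exclusion is exactly what the cited theorem (via normality of inverse branches, Lemma~\ref{invnormal}, and hyperbolicity) supplies, so you should either invoke it or supply that argument. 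Two smaller remarks: for ${\cal H}\cap {\cal B}$ the paper runs the same argument with $P^{\ast }$ and int$(\hat{K}(\langle \Gamma \rangle ))$ in place of $P$ and $F(\langle \Gamma \rangle )$, which is the precise way of realizing your ``choose $U$ and $K$ bounded'' --- since $\infty \in P(\langle \Gamma \rangle )$, one must work in the region of a priori bounded orbits so that the attraction does not send part of the trapping region toward the $\infty$‑component of $P(\langle \Gamma \rangle )$; and your separating‑curve argument for ${\cal H}\cap {\cal D}$ is a correct but slightly roundabout substitute for the paper's direct observation that, once $\Gamma \mapsto J(\langle \Gamma \rangle )$ is Hausdorff‑continuous, disconnectedness (with components a definite distance apart) is an open condition.
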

\begin{proof}
We first show that ${\cal H}$ is open and $\Gamma \mapsto J(\langle \Gamma \rangle )$ is continuous on 
${\cal H}.$ 
In order to do that, 
let $\G \in {\cal H} .$ Then $P(\langle \Gamma \rangle )\subset F(\langle \Gamma \rangle ).$ 
Combining \cite[Theorem 2.14(5)]{S1} and 
Lemma~\ref{invnormal}, 
it follows that 
for each compact subset $K$ of $F(\langle \Gamma \rangle )$ and each 
neighborhood $U$ of $P(\langle \Gamma \rangle )$ in 
$F(\langle \Gamma \rangle )$, there exists an $n\in \NN $ such that 
for each $\gamma \in \GN $, 
$\gamma _{n}\cdots \gamma _{1}(K)\subset U.$ 
From this argument, ${\cal H}$ is open. 
Moreover, combining the above argument and Theorem~\ref{repdense}, 
it is easy to see that 
$\Gamma \mapsto J(\langle \Gamma \rangle )$ is continuous on ${\cal H}.$ 
 
Replacing $P(\langle \Gamma \rangle )$ by  $P^{\ast }(\langle \Gamma \rangle )$ 
and replacing  $F(\langle \Gamma \rangle )$ by int$(\hat{K}(\langle \Gamma \rangle ))$ 
in the above paragraph, we easily see that 
${\cal H}\cap {\cal B}$ is open.  
 
Since ${\cal H}$ is open and $\Gamma \mapsto J(\langle \Gamma \rangle )$ is continuous on ${\cal H}$, 
it is easy to see that ${\cal H}\cap {\cal D}$ is open. Therefore  
${\cal H}\cap {\cal B}\cap {\cal D}$ is open. 

Thus we have proved our lemma. 
\end{proof}
\begin{lem}
\label{l:twohypp}
Let $g_{1},g_{2}\in \mbox{{\em Poly}}_{\deg \geq 2}$ be hyperbolic. 
Suppose that $\langle g_{1}\rangle ,\langle g_{2}\rangle \in {\cal G}.$ 
Suppose also that $P^{\ast }(\langle g_{1}\rangle )\subset \mbox{{\em int}}(K(g_{2}))$ 
and $P^{\ast }(\langle g_{2}\rangle )\subset \mbox{{\em int}}(K(g_{1})).$ 
Then, there exists an $m\in \NN $ such that 
for each $n\in \NN $ with $n\geq m$, 
$\langle g_{1}^{n},g_{2}^{n}\rangle \in {\cal G}$ and $\langle g_{1}^{n},g_{2}^{n}\rangle $ 
is hyperbolic.  
\end{lem}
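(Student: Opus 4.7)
The plan is to exploit the attracting dynamics of each individual $g_i$, together with the cross-containment hypothesis $P^{\ast }(\langle g_{i}\rangle )\subset \mbox{int}(K(g_{j}))$, in order to build a common compact absorbing set inside $\Omega := \mbox{int}(K(g_{1}))\cap \mbox{int}(K(g_{2}))$ that captures every forward orbit of $G_{n}:=\langle g_{1}^{n}, g_{2}^{n}\rangle $ starting in the postcritical data, provided $n$ is large.

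Concretely, I would proceed in three steps. (i) Since each $g_{i}$ is hyperbolic and $\langle g_{i}\rangle \in {\cal G}$, $J(g_{i})$ is connected, $\mbox{int}(K(g_{i}))$ is the (bounded) basin of the finite union $\Lambda _{i}$ of attracting periodic cycles of $g_{i}$, and Fatou's theorem gives $\Lambda _{i}\subset P^{\ast }(\langle g_{i}\rangle )\subset \Omega $. A chain-rule computation shows $CV^{\ast }(g_{i}^{n})\subset \bigcup _{\ell \geq 1}g_{i}^{\ell }(CV^{\ast }(g_{i}))\subset P^{\ast }(\langle g_{i}\rangle )$, so
\[
W:=P^{\ast }(\langle g_{1}\rangle )\cup P^{\ast }(\langle g_{2}\rangle )
\]
is a compact subset of $\Omega $ containing every finite critical value of every $g_{i}^{n}$. (ii) Choose bounded open neighborhoods $V_{i}$ of $\Lambda _{i}$ with $\overline{V_{i}}\subset \Omega $. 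Because $\mbox{int}(K(g_{i}))$ is precisely the basin of $\Lambda _{i}$ under $g_{i}$, iterates of $g_{i}^{n}$ drive any compact subset of $\mbox{int}(K(g_{i}))$ into $V_{i}$. Applying this to the compact sets $W,\overline{V_{1}},\overline{V_{2}}$, all of which lie in $\mbox{int}(K(g_{i}))$ for both $i$, yields $m\in \NN $ such that
\[
g_{i}^{n}(W)\subset V_{i}\quad \mbox{and}\quad g_{i}^{n}(\overline{V_{j}})\subset V_{i}\qquad (n\geq m;\ i,j\in \{1,2\}).
\]
(iii) Fix $n\geq m$. By Remark~\ref{pcbrem} together with $CV^{\ast }(g_{i}^{n})\subset W$, $P^{\ast }(G_{n})\subset \overline{\bigcup _{g\in G_{n}\cup \{Id\}}g(W)}$; a short induction on word length using the two displayed inclusions shows $g(W)\subset V_{i_{k}}$ for every $g=g_{i_{k}}^{n}\circ \cdots \circ g_{i_{1}}^{n}\in G_{n}$. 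Hence $P^{\ast }(G_{n})\subset W\cup \overline{V_{1}}\cup \overline{V_{2}}\subset \Omega $ and $G_{n}\in {\cal G}$.

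It remains to upgrade boundedness of $P^{\ast }(G_{n})$ to hyperbolicity of $G_{n}$. Cover the compact set $W\cup \overline{V_{1}}\cup \overline{V_{2}}\subset \Omega $ by finitely many open sets $U_{\alpha }$ with $\overline{U_{\alpha }}\subset \Omega $. Each $\overline{U_{\alpha }}$ is compactly contained in $\mbox{int}(K(g_{i}))$ for both $i$, so by enlarging $m$ if necessary I may also assume $g_{i}^{n}(\overline{U_{\alpha }})\subset V_{i}$ for every $n\geq m$, every $i$ and every $\alpha $; the same induction then gives $g(U_{\alpha })\subset V_{1}\cup V_{2}$ for every $g\in G_{n}$. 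Thus $\{g|_{U_{\alpha }}:g\in G_{n}\cup \{Id\}\}$ is uniformly bounded in $\CC $ and hence normal on $U_{\alpha }$ by Montel's theorem, so $U_{\alpha }\subset F(G_{n})$. Taking the union of the $U_{\alpha }$ yields $P^{\ast }(G_{n})\subset F(G_{n})$, and since $\infty \in F(G_{n})$ automatically, $P(G_{n})\subset F(G_{n})$, i.e., $G_{n}$ is hyperbolic. The principal obstacle is ensuring that the single-map attracting dynamics of $g_{1}$ and $g_{2}$ remain compatible under compositions that alternate between $g_{1}^{n}$ and $g_{2}^{n}$; this is exactly what the cross-containment hypothesis supplies, by forcing $\Lambda _{i}$ (and therefore $V_{i}$) into the basin structure of the other map, so that every subsequent $g_{j}^{n}$ re-absorbs $\overline{V_{1}}\cup \overline{V_{2}}$ into $V_{j}$.
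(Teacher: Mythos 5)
Your argument is correct and follows essentially the same route as the paper: both construct a bounded open absorbing set compactly contained in $\mbox{int}(K(g_{1}))\cap \mbox{int}(K(g_{2}))$ that high iterates $g_{1}^{n},g_{2}^{n}$ map into itself, so that $P^{\ast }(\langle g_{1}^{n},g_{2}^{n}\rangle )$ is trapped there and lies in the Fatou set. The paper streamlines this by taking a single pair of neighborhoods $\overline{V}\subset U$ of $P^{\ast }(\langle g_{1}\rangle )\cup P^{\ast }(\langle g_{2}\rangle )$ with $g_{i}^{n}(U)\subset V$ and concluding via $U\subset \mbox{int}(\hat{K})\subset F$, rather than your separate neighborhoods of the attracting cycles plus a covering and Montel argument, but the mechanism is the same.
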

\begin{proof}
Let $U,V$ be two open neighborhood of 
$\bigcup _{i=1}^{2}P^{\ast }(\langle g_{i}\rangle )$ such that 
$\overline{V}\subset U\subset \overline{U}\subset  \bigcap _{i=1}^{2}\mbox{int}(K(g_{i})). $
Then there exists an $m\in \NN $ such that 
for each $n\in \NN $ with $n\geq m$, 
$\bigcup _{i=1}^{2}g_{i}^{n}(U)\subset V.$ 
It is easy to see that for each $n\in \NN $ with $n\geq m$, 
$P^{\ast }(\langle g_{1}^{n},g_{2}^{n}\rangle )\subset U\subset 
\mbox{int}(\hat{K}(\langle g_{1}^{n},g_{2}^{n}\rangle ))
\subset F(\langle g_{1}^{n},g_{2}^{n}\rangle ).$ 
Therefore for each $n\in \NN $ with $n\geq m$, 
$\langle g_{1}^{n}, g_{2}^{n}\rangle \in {\cal G}$ and 
$\langle g_{1}^{n}, g_{2}^{n}\rangle $ is hyperbolic. 
Thus we have proved our lemma. 
\end{proof} 
We give a sufficient condition so that statement~\ref{mainthran1-3} in Theorem~\ref{mainthran1} or 
statement~\ref{mainthran2-3} in Theorem~\ref{mainthran2} holds. 
\begin{lem}
\label{l:disjfinv}
Let $\Gamma $ be a compact subset of Poly$_{\deg \geq 2}$ and 
let $G=\langle \Gamma \rangle .$ 
Suppose that $G\in {\cal G}$, $G$ is semi-hyperbolic (resp. hyperbolic), and 
there exist two non-empty bounded open subsets $V_{1}, V_{2}$ of $\CC $ with $\overline{V_{1}}\cap \overline{V_{2}}=\emptyset $
 such that 
for each $i=1,2$, 
$\bigcup _{h\in \Gamma }h(V_{i})\subset V_{i}.$ 
Then, statement~\ref{mainthran1-3} in Theorem~\ref{mainthran1} (resp. 
statement~\ref{mainthran2-3} in Theorem~\ref{mainthran2}) holds.  
\end{lem}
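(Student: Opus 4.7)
The plan is to reduce both conclusions of the lemma to the single assertion that $\mbox{int}(K_\gamma(f))$ has at least two connected components for every $\gamma\in\GN$. Granting this, the hyperbolic conclusion is Case~2 of the proof of Theorem~\ref{mainthran2}, since Lemma~\ref{mainthran2lem2} upgrades ``$\geq 2$'' to ``infinitely many''. In the semi-hyperbolic case, each $J_\gamma(f)$ is connected (Lemma~\ref{fibconnlem}), and a connected Julia set whose complement has more than one bounded component cannot be a Jordan curve; hence no $J_\gamma(f)$ is a Jordan curve, which rules out Cases~1 and 2 of Theorem~\ref{mainthran1} and leaves Case~3 $=$ statement~\ref{mainthran1-3}.

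I first handle periodic fibers. I claim that for every $g\in G$, no connected component of $\mbox{int}(K(g))$ meets both $V_1$ and $V_2$. The forward invariance $h(V_i)\subset V_i$ for $h\in\Gamma$ iterates to $g(V_i)\subset V_i$ for every $g\in G$, so each $V_i\subset\mbox{int}(K(g))$. Suppose a component $D$ contains points $z_i\in V_i\cap D$ for $i=1,2$. Since $P^\ast(\langle g\rangle)\subset P^\ast(G)$ is bounded, $J(g)$ is connected, and since the semi-hyperbolicity of $G$ descends to $\langle g\rangle$, $g$ is a semi-hyperbolic polynomial; in particular by the classification of polynomial Fatou components (no wandering domains, no Siegel/Herman rings), the orbit of Fatou components $\{g^n(D)\}$ is eventually periodic, and on the limiting periodic bounded Fatou component the $g^p$-orbits converge to an attracting or parabolic fixed point $a$ of $g^p$. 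The invariance $g^{n_0+kp}(z_i)\in V_i$ then forces $a\in\overline{V_1}\cap\overline{V_2}=\emptyset$, a contradiction. In particular, for every periodic $\omega=(h_1,\ldots,h_p,h_1,\ldots)\in\GN$ with $g=h_p\circ\cdots\circ h_1\in G$, the identifications $J_\omega(f)=J(g)$ and $\mbox{int}(K_\omega(f))=\mbox{int}(K(g))$ yield a $J_\omega(f)$ that is connected but not a Jordan curve.

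To promote the disconnectedness to an arbitrary fiber, suppose for contradiction that $\mbox{int}(K_{\gamma_0}(f))$ is connected for some $\gamma_0\in\GN$. By Proposition~\ref{shonecomp}, $J_{\gamma_0}(f)$ is a Jordan curve and $\mbox{int}(K_{\gamma_0}(f))$ is a topological disk. I then apply Lemma~\ref{mainthran2lem3}: it produces a neighborhood $\mathcal{U}_0$ of $\gamma_0$ in $\GN$ such that whenever $\sigma^{n_k}(\gamma)\in\mathcal{U}_0$ for some $n_k\to\infty$, $J_\gamma(f)$ is a Jordan curve. Since periodic points of $\sigma$ are dense in $\GN$, I pick a periodic $\omega\in\mathcal{U}_0$; then $\sigma^{kp}\omega=\omega\in\mathcal{U}_0$ for every $k$, so the lemma forces $J_\omega(f)$ to be a Jordan curve, contradicting the previous paragraph.

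The main obstacle is that Lemma~\ref{mainthran2lem3} is stated for hyperbolic $f$, whereas the first half of the lemma only assumes semi-hyperbolicity. Its proof, however, rests only on the continuity of $\gamma\mapsto J_\gamma(f)$ (available under semi-hyperbolicity by \cite[Theorem 2.14(4)]{S1}) and on the uniform pull-back distortion estimates of \cite[Corollary 2.7]{S4}; rerunning the Riemann--Hurwitz bookkeeping inside the candidate Fatou component shows that the neighborhood $\mathcal{U}_0$ exists in the semi-hyperbolic setting as well, completing the proof.
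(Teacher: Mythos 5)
Your reduction of both conclusions to the single claim that $\mbox{int}(K_{\g }(f))$ is disconnected for every $\g \in \GN $ is sound, and your treatment of periodic fibers is fine. The gap lies in the step that propagates disconnectedness from periodic fibers to arbitrary fibers in the semi-hyperbolic case. You invoke Lemma~\ref{mainthran2lem3}, which is stated for hyperbolic $f$, and assert that its proof survives under semi-hyperbolicity because it ``rests only on the continuity of $\g \mapsto J_{\g }(f)$ and on \cite[Corollary 2.7]{S4}.'' That is not so: the proof of Lemma~\ref{mainthran2lem3} begins by placing $\pi _{\CCI }(P^{\ast }(f)\cap \pi ^{-1}(\{ \alpha \} ))$ inside int$(K_{\alpha }(f))$ and enclosing it by a Jordan curve $\xi $ there, and the Riemann--Hurwitz count $\sum _{i}p_{i}=d-1$ at the end depends on every finite critical value of $f_{\g ,m_{j}}$ lying in the distinguished component $W$. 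Both steps use $P(f)\subset \tilde{F}(f)$, i.e.\ hyperbolicity along fibers. Under mere semi-hyperbolicity, critical points of the fiber maps may lie on the fiberwise Julia sets, the fiberwise postcritical set need not be contained in int$(K_{\alpha }(f))$, and the critical-value count fails; the paper's semi-hyperbolic substitute, Lemma~\ref{mainthranlem1}, needs the extra hypothesis $J_{\beta }(f)<J_{\alpha }(f)$, which you do not have here. So the semi-hyperbolic half of your argument is not closed.

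The intended proof is much shorter and bypasses all of this. By \cite[Theorem 2.14(1)]{S1} (valid for semi-hyperbolic skew products), for each $\g \in \GN $ and each connected component $U$ of $F_{\g }(f)$, the images $\g _{n}\cdots \g _{1}(K)$ of any compact $K\subset U$ have diameter tending to $0.$ Each $V_{i}$ is forward invariant under every $h\in \G $ and bounded, hence $V_{i}\subset F_{\g }(f)$ for every $\g $; and since $\overline{V_{1}}\cap \overline{V_{2}}=\emptyset $, a connected compact subset of a single component $U$ meeting both $V_{1}$ and $V_{2}$ would have all of its forward images meeting both, so their diameters would stay bounded below by the (positive) distance between $\overline{V_{1}}$ and $\overline{V_{2}}$ --- a contradiction. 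Hence every $F_{\g }(f)$ has at least two bounded components, no $J_{\g }(f)$ is a Jordan curve, and the trichotomies of Theorems~\ref{mainthran1} and \ref{mainthran2} force statement~\ref{mainthran1-3} (resp.\ statement~\ref{mainthran2-3}), exactly as in your opening paragraph. Your hyperbolic case is correct as written, since there Lemma~\ref{mainthran2lem3} applies verbatim, but it is a considerable detour compared with this one-line contraction argument.
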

\begin{proof}
Let $f:\Gamma ^{\NN }\times \CCI \rightarrow \Gamma ^{\NN }\times \CCI $ be the skew product associated with 
$\Gamma .$ 
By \cite[Theorem 2.14(1)]{S1}, for each $\gamma \in \GN $ and for 
each connected component $U$ of $F_{\gamma }(f)$, 
if $K$ is a compact subset of $U$, then diam$\gamma _{n}\cdots \gamma _{1}(K)\rightarrow 0$ 
as $n\rightarrow \infty .$ 
From our assumption, 
it follows that for each $\gamma \in \GN $, 
$F_{\gamma }(f)$ has at least two bounded components. 
Thus, the statement of our lemma holds. 
\end{proof}
\begin{rem}
\label{r:coexample}
Combining Lemma~\ref{Constprop}, \ref{shshfinprop}, 
\ref{l:omegahopen} and \ref{l:twohypp} (and their proofs), 
we easily obtain many examples of semi-hyperbolic (resp. hyperbolic) $G\in {\cal G}$ or $G\in {\cal G}_{dis}$,  
and we easily obtain many examples of $\Gamma $ such that 
statement~\ref{mainthran1-2} in Theorem~\ref{mainthran1} 
(resp. statement~\ref{mainthran2-2} in Theorem~\ref{mainthran2}) holds.  
 Moreover, combining Lemma~\ref{l:twohypp}, \ref{l:disjfinv} and their proofs, 
 we easily obtain many examples of $\Gamma $ such that 
 statement~\ref{mainthran1-3} in Theorem~\ref{mainthran1} or 
statement~\ref{mainthran2-3} in Theorem~\ref{mainthran2} holds. 
\end{rem}

\end{document}